\numberwithin{equation}{section}
\newcommand{\mf}[1]{\mathfrak{#1}}
\newtheorem{theorem}{Theorem}
\newtheorem{thm}[theorem]{Theorem}
\newtheorem{lemma}[theorem]{Lemma}
\newtheorem{lem}[theorem]{Lemma}
\newtheorem{corollary}[theorem]{Corollary}
\newtheorem{proposition}[theorem]{Proposition}
\theoremstyle{definition}
\numberwithin{theorem}{section} 
\numberwithin{equation}{section}
\numberwithin{table}{section}
\newcommand{\R}{\mathbb{R}}
\newcommand{\Z}{\mathbb{Z}}
\newcommand{\N}{\mathbb{N}}
\begin{document}

\title[Representation of even Gaussian integer \`a la Chen]{Representation of even Gaussian integer \`a la Chen}
\author{Soumyarup Banerjee}
\address{Department of Mathematics, Indian Institute of Technology Kharagpur, Kharagpur, West Bengal - 721302, India.}
\email{soumya.tatan@gmail.com}
\author{Habibur Rahaman}
\address{Department of Mathematics and Statistics, Indian Institute of Science Education and Resaerch Kolkata, Mohanpur, West Bengal - 741246, India.}
\email{hr21rs044@iiserkol.ac.in}
%\date{\today}
%\subjclass[2010] {11R42, 11R11, 11S40, 33C60}

%\keywords{Piltz divisor problem, Dedekind zeta function, Special function, Riesz sum}

\thanks{2020 \textit{Mathematics Subject Classification.} 11N35, 11N36, 11P32.\\
		\textit{Keywords and phrases.} Linear Sieve, Goldbach's Conjecture, Gaussian Primes}

\medskip
\begin{abstract}
In this article, we represent an even Gaussian integer with sufficiently large norm as a sum of a Gaussian prime and a Gaussian integer with at most two Gaussian prime factors akin to Chen in the rational case.
\end{abstract}

\maketitle

\section{Introduction}
One of the ancient conjecture was posed by Goldbach around 1742 in a letter to Euler. The conjecture states that every even integer greater than two may be expressed as the sum of two primes. Till date, the conjecture has been verified upto the even number $4\times 10^{18}$. In literature, several progresses in different directions have been made towards this conjecture but it still remains unsolved.

In one of the directions, sieve theory has been used as a main tool to restrict the number of primes of an integer. The story begins around 1948 due to R\'enyi \cite{Renyi} who basically proved that every sufficiently large even integer $n$ can be written as sum of a prime and an integer with at most $k$ prime factors. Later, many investigations have been made towards this direction to generalize the result by reducing the value of $k$. Pan \cite{Pan},  Wang \cite{Wang} and Bombieri \cite{Bombieri} managed to improve the value of $k$ to $5$, $4$ and $3$ respectively. Finally, Chen \cite{Chen} established that every sufficiently large even number $n$ can be expressed as sum of a prime and an integer with at most $2$ prime factors, which is the best possible result towards Goldbach's conjecture. 

Several mathematicians have also studied Goldbach's conjecture in the language of a number field through sieving techniques. We call an algebraic integer $\mathcal{N} (\neq 0)$ even if each prime ideal with norm $2$ divide $\mathcal{N}$ and totally positive if all of its conjugates are positive real number. Let $\Pi_m$ denotes a totally positive algebraic integer with at most $m$ prime ideal divisors. It was Rademacher \cite{Rademacher}, who made the first impact towards Goldbach conjecture in number field by establishing every totally positive even algebraic integer $\mathcal{N}$ can be written as $\mathcal{N}=\Pi_7+\Pi_7$. Later, few improvements have been made in Rademacher's approach by expressing $\mathcal{N}=\Pi_2+\Pi_3$ due to Vinogradov \cite{Vinogradov} and $\mathcal{N}=\Pi_1+\Pi_{m_0}$ for some fixed $m_0$ due to Hinz \cite{Hinz1} respectively. Finally around 1991, Hinz \cite{Hinz2} investigated this problem in totally real algebraic number field and obtained an analogue of Chen's theorem for totally positive even algebraic integer $\mathcal{N}$ by expressing $\mathcal{N}=\Pi_1+\Pi_2$.
 
Holben and Jordan \cite{Holben} around 1968 conjectured Goldbach's problem for the ring of Gaussian integers, which precisely states as for every even Gaussian integer $\mathcal{N}$, there are Gaussian primes $p_1$ and $p_2$ such that $\mathcal{N} = p_1+p_2$. In this manuscript, we have studied the above conjecture for every even Gaussian integer with sufficiently large norm and the primary concern here is to represent an even Gaussian integer in an analogues way of Chen's representation in rational case. 

Let $r(\mathcal{N})$ be the number of representation of $\mathcal{N}$ as a sum of a Gaussian prime and a Gaussian integer with at most two Gaussian prime ideal factors. For any Gaussian integer $n$, we denote its usual norm by $N(n)$. Our main goal here is to find the lower bound of $r(\mathcal{N})$ for an even Gaussian integer $\mathcal{N}$ with sufficiently large $N(\mathcal{N})$.
\begin{thm}\label{main_thm}
Let $\mathcal{N}$ be an even Gaussian integer. Then we have the lower bound 
				\begin{align*}
					r(\mathcal{N})\gg \mathfrak{S}_1(\mathcal{N})\frac{N(\mathcal{N})}{(\log N(\mathcal{N}))^2},
				\end{align*}
				where 
				\begin{align}\label{Sing series}
					\mathfrak{S}_1(\mathcal{N}):=\displaystyle\prod_{N(p)>2}\left(1-\frac{1}{(N(p)-1)^2}\right)\prod_{\substack{N(p) >2\\ p\mid \mathcal{N}}}\frac{N(p)-1}{N(p)-2}.
				\end{align}
\end{thm}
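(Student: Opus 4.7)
The plan is to adapt Chen's classical sieve argument from $\Z$ to $\Z[i]$. Write $x = N(\mathcal{N})$ and introduce the sifting sequence
\begin{align*}
\mathcal{A} \;=\; \{\,\mathcal{N} - p \;:\; p \text{ a Gaussian prime with } N(p) < x\,\},
\end{align*}
the sifting set $\mathcal{P}$ of Gaussian primes not dividing $2\mathcal{N}$, and the sifting function
\begin{align*}
S(\mathcal{A}, \mathcal{P}, z) \;=\; \#\bigl\{\, a \in \mathcal{A} \,:\, (a, P(z)) = 1\,\bigr\}, \qquad P(z) = \prod_{p \in \mathcal{P},\; N(p) < z} p.
\end{align*}
Chen's weighted inequality then takes the form
\begin{align*}
r(\mathcal{N}) \;\geq\; S(\mathcal{A}, \mathcal{P}, x^{1/8}) \;-\; \tfrac{1}{2}\!\!\sum_{x^{1/8} \leq N(p) < x^{1/3}} S(\mathcal{A}_p, \mathcal{P}, x^{1/8}) \;-\; \tfrac{1}{2}\Omega,
\end{align*}
where $\mathcal{A}_p = \{\,a \in \mathcal{A} : p \mid a\,\}$ and $\Omega$ counts quadruples $(p, p_1, p_2, p_3)$ with $\mathcal{N} - p = p_1 p_2 p_3$ and every norm in the range $[x^{1/8}, x^{1/3})$.

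The two essential analytic inputs are (i) a Bombieri--Vinogradov theorem for Gaussian primes in residue classes modulo integral ideals,
\begin{align*}
\sum_{N(\mathfrak{d}) < x^{1/2}(\log x)^{-B}} \;\max_{(\mathfrak{a}, \mathfrak{d})=1} \Bigl|\, \pi_{\Z[i]}(x; \mathfrak{d}, \mathfrak{a}) \,-\, \tfrac{\pi_{\Z[i]}(x)}{\varphi(\mathfrak{d})}\, \Bigr| \;\ll_A\; \frac{x}{(\log x)^A},
\end{align*}
obtainable by specializing Hinz's number-field Bombieri--Vinogradov theorem to $\Q(i)$; and (ii) the Jurkat--Richert linear sieve, which applies since the local density $\omega(\mathfrak{p}) = N(\mathfrak{p})/(N(\mathfrak{p})-1)$ for $\mathfrak{p} \nmid \mathcal{N}$ (and $\omega(\mathfrak{p}) = 0$ otherwise) defines a sifting problem of dimension $1$. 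Together these produce
\begin{align*}
S(\mathcal{A}, \mathcal{P}, x^{1/8}) \;\gg\; \mathfrak{S}_1(\mathcal{N})\,\frac{x}{(\log x)^2},
\end{align*}
the Euler product $\prod_{\mathfrak{p}}(1 - \omega(\mathfrak{p})/N(\mathfrak{p}))(1 - 1/N(\mathfrak{p}))^{-1}$ collapsing to $\mathfrak{S}_1(\mathcal{N})$; the correction factor $(N(\mathfrak{p})-1)/(N(\mathfrak{p})-2)$ at $\mathfrak{p} \mid \mathcal{N}$ arises because the congruence $p \equiv \mathcal{N} \equiv 0 \pmod{\mathfrak{p}}$ excludes one of the $N(\mathfrak{p})-1$ prime residue classes.

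The central difficulty lies in Chen's switching argument applied to the middle sum. For each $p$ with $N(p) \in [x^{1/8}, x^{1/3})$, the quantity $S(\mathcal{A}_p, \mathcal{P}, x^{1/8})$ counts Gaussian primes $q$ such that $(\mathcal{N} - q)/p$ carries no Gaussian prime factor of norm below $x^{1/8}$; after a Buchstab decomposition this is recast as an upper-bound sift of a bilinear sequence in $(p, m)$ to which the linear sieve again applies, invoking the Bombieri--Vinogradov input on the switched sequence. An elementary Mertens-type estimate bounds $\Omega \ll C\,\mathfrak{S}_1(\mathcal{N})\, x/(\log x)^2$ for a constant $C$ strictly smaller than the main-term coefficient, so the three contributions combine to the stated lower bound once the sifting exponents are fine-tuned exactly as in the rational case. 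I expect the most delicate point to be this switching step --- specifically, maintaining uniformity of the Bombieri--Vinogradov error in the sifting modulus $p$ when applied to the bilinear switched sequence over $\Z[i]$ --- since any loss of level of distribution would force a larger $C$ and potentially swallow the main term.
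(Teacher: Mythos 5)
Your high-level architecture matches the paper's: the Chen weighted sieve to decompose $r(\mathcal{N})$, the Jurkat--Richert linear sieve specialised to $\Z[i]$ with the multiplicative density $1/\varphi(\mathfrak{d})$, a Bombieri--Vinogradov theorem for Gaussian primes (the paper uses Huxley's large sieve via Vatwani rather than Hinz, but those routes are interchangeable here), and Mertens-type asymptotics to extract the singular series $\mathfrak{S}_1(\mathcal{N})$ from $V(z)$. However, you have \emph{swapped} the roles of the two subtracted terms, and this is a genuine gap rather than a cosmetic slip. In the paper (and in Chen's original argument), the middle sum $\sum_{z \leq N(q) < y} S(\mathcal{A}_q, \mathcal{P}, z)$ is handled \emph{directly} by the upper-bound linear sieve with the Bombieri--Vinogradov input --- no switching is needed, and none is performed. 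It is the triple-product term $\Omega$ (the paper's $T_2$) that cannot be bounded elementarily and that demands Chen's switching trick: one passes from $n = \mathcal{N} - p = p_1 p_2 p_3$ to the ``reversed'' set $\mathcal{B} = \{\mathcal{N} - p_1 p_2 p_3\}$ and then sieves $\mathcal{B}$ for primality up to level $y$, using a bilinear Bombieri--Vinogradov estimate (the paper's Lemma~\ref{errorbound}, the $\Z[i]$ analogue of \cite[Theorem 10.7]{Nathanson}). Your claim that ``an elementary Mertens-type estimate bounds $\Omega \ll C\,\mathfrak{S}_1(\mathcal{N})\,x/(\log x)^2$ for a constant $C$ strictly smaller than the main term'' is precisely the thing Chen's method makes \emph{hard}; if a Mertens bound sufficed there, the whole argument (already for $\Z$) would collapse to a few pages and would have been found long before 1973.

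A second, related error is your description of $\Omega$ as counting factorisations $\mathcal{N}-p = p_1 p_2 p_3$ with ``every norm in the range $[x^{1/8}, x^{1/3})$.'' The correct constraint, which you need for the switching to work, is the \emph{asymmetric} one: $x^{1/8} \leq N(p_1) < x^{1/3} \leq N(p_2) \leq N(p_3)$. That $p_2, p_3$ have large norm is exactly what makes $\mathcal{B}$ thin enough for the switched sieve to win, and the bound $N(p_1 p_2 p_3) < N(\mathcal{N})$ together with $y = x^{1/3}$ forces at most two factors above $y$ --- this is where the exponent $1/3$ is dictated. You should also note that over $\Z[i]$ there are four units, so the passage from counting ideal factorisations to counting Gaussian-integer factorisations introduces a factor $4^{-3}$ in $T_2 \leq \tfrac{1}{64}S(\mathcal{B},\mathcal{P},y) + O(y)$, and the final inequality $2\log 3 - \log 6 - c > 0$ with $c = \int_{1/8}^{1/3}\frac{\log(2-3w)}{w(1-w)}\,dw \approx 0.363$ has to be checked numerically after all these constants are tracked; your proposal leaves this verification implicit, but with the switching misplaced you would not even reach the stage where those constants can be compared.
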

We continue here by making remark about the key tools we have used to prove the above theorem. Few of the important arguments are influenced by the method of proving Chen's corresponding result in rational case by Nathanson \cite{Nathanson}. 

Selberg's weighted sieve method allows us to obtain a lower bound of $r(\mathcal{N})$ in terms of three sieving functions and we have mainly applied linear sieve inequality for Gaussian integers to bound those sieving functions. Two essential ingredients to estimate the error terms are an analogue of Bombieri-Vinogradov type theorem for Gaussian integers and an analogue of large sieve inequality in number fields due to Huxley. 

The following corollary is a direct application of our main result.
\begin{corollary}
Any even Gaussian integer with sufficiently large norm can be expressed as a sum of a Gaussian prime and a Gaussian integer with at most two Gaussian prime factors.
\end{corollary}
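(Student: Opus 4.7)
The plan is to derive the corollary as an immediate consequence of Theorem~\ref{main_thm}. Indeed, once I show that the singular series $\mathfrak{S}_1(\mathcal{N})$ admits a lower bound by a positive constant that is uniform in the even Gaussian integer $\mathcal{N}$, Theorem~\ref{main_thm} forces
\[
r(\mathcal{N}) \gg \frac{N(\mathcal{N})}{(\log N(\mathcal{N}))^{2}},
\]
and the right-hand side tends to infinity with $N(\mathcal{N})$. In particular $r(\mathcal{N})\ge 1$ for every $\mathcal{N}$ with $N(\mathcal{N})$ sufficiently large, which is exactly the assertion of the corollary since $r(\mathcal{N})$ counts representations of the required shape.

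The first and only substantive step is to analyze the $\mathcal{N}$-independent Euler product
\[
\prod_{N(p)>2}\left(1-\frac{1}{(N(p)-1)^{2}}\right).
\]
Each factor lies strictly in $(0,1)$, so after taking logarithms the positivity of the product reduces to the absolute convergence of $\sum_{N(p)>2}(N(p)-1)^{-2}$. Since $N(p)>2$ forces $N(p)\ge 5$, I would dominate $(N(p)-1)^{-2}\le 4\,N(p)^{-2}$ and appeal to the convergence of the partial Euler product of $\zeta_{\Q(i)}(s)$ at $s=2$; equivalently, the standard bound $|\{p\ \text{Gaussian prime}:N(p)\le X\}|\ll X/\log X$ makes the majorant sum finite. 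Hence this product equals some absolute constant $c_{0}>0$.

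The second step is trivial: the finite product
\[
\prod_{\substack{N(p)>2\\ p\mid \mathcal{N}}}\frac{N(p)-1}{N(p)-2}
\]
consists of factors that are each strictly greater than $1$, so it is bounded below by $1$. Combining these two estimates yields $\mathfrak{S}_{1}(\mathcal{N})\ge c_{0}$ uniformly in $\mathcal{N}$, and feeding this into Theorem~\ref{main_thm} completes the proof.

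There is essentially no obstacle in this deduction; all of the analytic work is already encoded in Theorem~\ref{main_thm}. The only point deserving even a line is the verification that the infinite Euler product over Gaussian primes converges to a strictly positive number, and this is routine from the convergence of $\zeta_{\Q(i)}(s)$ at $s=2$.
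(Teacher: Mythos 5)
Your proof is correct and is exactly what the paper intends (the paper simply asserts the corollary is a ``direct application'' of Theorem~\ref{main_thm} without spelling it out). You correctly isolate the only nontrivial point, namely that $\mathfrak{S}_1(\mathcal{N})$ is bounded below by an absolute positive constant: the $\mathcal{N}$-independent Euler product $\prod_{N(p)>2}\bigl(1-(N(p)-1)^{-2}\bigr)$ converges to some $c_0>0$ because $\sum_{N(p)>2}(N(p)-1)^{-2}<\infty$, and the finite product over $p\mid\mathcal{N}$ has every factor $>1$, so it is $\ge 1$; hence $r(\mathcal{N})\gg N(\mathcal{N})/(\log N(\mathcal{N}))^2\to\infty$, giving $r(\mathcal{N})\ge 1$ for $N(\mathcal{N})$ large.
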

The paper is organized as follows. In Section \ref{Background set up} we fix few notations and collect essential results for Gaussian integers which are important throughout the article. In Section \ref{Lower bound of r(N)} we obtain the lower bound of $r(\mathcal{N})$ in terms of three sieving functions. In Section \ref{Linear Sieve in Z[i]} we set basic ingredients to apply linear sieve on the sieving functions appeared in Section \ref{Lower bound of r(N)}. We obtain the desired bound of the sieving functions in Section \ref{Estimation of S(A, P, z)}, \ref{Estimation of S(A_q, P, z)} and \ref{Estimation of S(B, P, y)} respectively. Finally, in Section \ref{Proof of Main Theorem}, we combine the bounds of the previous sections in order to prove Theorem \ref{main_thm}.

\section{Background set up}\label{Background set up}
In this section, we fix some notations and state important results for Gaussian integers which are essential in our exposition. We denote the set of Gaussian integers by its usual notation $\Z[i]$. Let $a, b, c \in \Z[i]$ and the principal ideals generated by $a, b$ and $c$ be denoted by $\mf{a}, \mf{b}$ and $\mf{c}$ respectively.  We call $a$ is divisible by $c$ if $\mf{a}\subseteq \mf{c}$ and $a\equiv b \pmod c$ if $a-b\in \mf{c}$. Recall that $\Z[i]$ is a principal ideal domain, which implies every ideal is generated by some Gaussian integer. Thus the ideal norm of the ideal $\mf{a}$, which we denote by $N(\mf{a})$ is same as the usual norm of the Gaussian integer $a$ and for that we will frequently interchange both of the norms as per requirements. For non-zero $a, c \in \Z[i]$, a greatest common divisor of $a$ and $c$ is a common divisor with maximal norm. Similar definition also holds for a greatest common divisor of two ideals $\mf{a}$ and $\mf{c}$. We abbreviate the notation for the norm of both of the greatest common divisors as $N(\gcd(a, c)) = N(a, c)$ and $N(\gcd(\mf{a}, \mf{c})) = N(\mf{a}, \mf{c})$.

Throughout, we use $p, q$ to denote Gaussian primes and $\mf{p}, \mf{q}, \mf{n}, \mf{d}$ to denote the principal ideals generated by $p, q, n, d$ respectively in $\Z[i]$. We next recall two multiplicative functions of an ideal in $\Z[i]$. The M\"{o}bius function $\mu$ on an ideal $\mathfrak{n}\subset\mathbb{Z}[i]$ can be defined as
		\begin{align}\label{Mobius}
			\mu(\mathfrak{n})=\begin{cases}
				(-1)^r,  \text{   if }\mathfrak{n}=\mathfrak{p}_1\mathfrak{p}_2\ldots\mathfrak{p}_r\\
				0,  \text{   if $\mf{n}$ is not square free,}
			\end{cases}
		\end{align}
where $\mathfrak{p}_1,\mathfrak{p}_2\ldots,\mathfrak{p}_r$ are prime ideals and the Euler totient function $\phi$ on an ideal $\mathfrak{n}\subset\mathbb{Z}[i]$ can be defined as
			\begin{align}\label{phin}
\phi(\mathfrak{n})=\frac{1}{N(\mathfrak{n})}\prod_{\mathfrak{p}\mid\mathfrak{n}}\left(1-\frac{1}{N(\mathfrak{p})}\right),
			\end{align}
where the product runs over the prime ideals. The following lemma provides an analogue of Merten's result for rational case in $\Z[i]$ set up.
\begin{lemma}\label{Merten's theorem_in_z[i]}
	For sufficiently large $x$, we have
	\begin{align*}
	\sum_{\substack{\N(\mf{p})<x}}\frac{1}{N(\mf{p})}=\log\log x+B+O\left(\frac{1}{\log x}\right),
	\end{align*}
	for some constant $B>0$ and 
	\begin{align}\label{Merten_2}
		\prod_{\substack{N(\mf{p})<x}}\left(1-\frac{1}{N(\mf{p})}\right)^{-1}=\frac{\pi}{4}e^\gamma\log x\left(1+O\left(\frac{1}{\log x}\right)\right).
	\end{align}
\end{lemma}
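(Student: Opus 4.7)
The plan is to prove both assertions by adapting the classical Mertens argument to the Dedekind zeta function $\zeta_K(s)$ of $K = \mathbb{Q}(i)$. Two analytic facts underpin the proof: (a) a Prime Ideal Theorem for $\mathbb{Q}(i)$ with a de la Vall\'ee Poussin--type error, which follows from the standard non-vanishing of $\zeta_K(s)$ on the line $\operatorname{Re}(s) = 1$; and (b) the Dedekind class-number formula, which for $\mathbb{Q}(i)$ specialises to $\operatorname{Res}_{s=1}\zeta_K(s) = \frac{2^{r_1}(2\pi)^{r_2}hR}{w\sqrt{|d_K|}} = \frac{\pi}{4}$ after substituting $h = R = 1$, $w = 4$, $|d_K| = 4$, and $(r_1, r_2) = (0,1)$.

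The first assertion I would prove in a routine way. From the Prime Ideal Theorem one derives the Chebyshev estimate $\theta_K(x) := \sum_{N(\mathfrak{p}) \leq x}\log N(\mathfrak{p}) = x + O(x/\log x)$. A first partial summation then gives $\sum_{N(\mathfrak{p}) \leq x}\frac{\log N(\mathfrak{p})}{N(\mathfrak{p})} = \log x + A + O(1/\log x)$ for some constant $A$, and a second partial summation upgrades this to the claimed asymptotic $\sum_{N(\mathfrak{p}) < x}\frac{1}{N(\mathfrak{p})} = \log\log x + B + O(1/\log x)$.

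For the product formula, I would take $-\log$ and expand $-\log(1 - u) = u + \sum_{k \geq 2} u^k/k$. The contribution of the linear term is exactly the sum just established; the remaining double sum $M := \sum_\mathfrak{p}\sum_{k \geq 2}\frac{1}{k\, N(\mathfrak{p})^k}$ converges absolutely over all prime ideals, with truncation error $O(1/x)$. Exponentiating then produces
\begin{align*}
\prod_{N(\mathfrak{p}) < x}\left(1 - \frac{1}{N(\mathfrak{p})}\right)^{-1} = e^{B + M}\log x\left(1 + O\!\left(\frac{1}{\log x}\right)\right),
\end{align*}
for a constant $e^{B+M}$ which is at this stage unspecified.

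The main obstacle is to identify $e^{B+M} = \frac{\pi}{4}e^\gamma$. I would do this by evaluating $\log\zeta_K(s)$ in two ways as $s \to 1^+$. Fact (b) forces $\log\zeta_K(s) = -\log(s-1) + \log(\pi/4) + o(1)$, while the Euler product expansion gives $\log\zeta_K(s) = \sum_\mathfrak{p} N(\mathfrak{p})^{-s} + M + o(1)$. Applying Abel summation to $\sum_\mathfrak{p} N(\mathfrak{p})^{-s}$ with $F(t) = \log\log t + B + O(1/\log t)$ from the first assertion, and using the classical identity $\int_0^\infty e^{-u}\log u\, du = -\gamma$ after the substitution $u = (s-1)\log t$, one obtains $\sum_\mathfrak{p} N(\mathfrak{p})^{-s} = -\log(s-1) + B - \gamma + o(1)$ as $s \to 1^+$. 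Matching the two expressions for $\log\zeta_K(s)$ yields $B + M = \log(\pi/4) + \gamma$, which is exactly the desired constant. This last step is the only genuinely field-specific ingredient; everything earlier is standard.
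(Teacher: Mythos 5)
Your argument is correct, and it is the standard analytic proof of Mertens-type theorems in a number field. Note, though, that the paper does not prove this lemma at all: it cites \cite[Theorem~7.153]{Bordelles} and identifies the factor $\pi/4$ by comparing \cite[Equation~(7.21)]{Bordelles} with \cite[p.~454]{Akshaa}. So there is no ``paper proof'' to match against; you have supplied a self-contained one.

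Your proof structure is sound. You correctly isolate the only field-specific input, namely the residue of the Dedekind zeta function of $\Q(i)$ at $s=1$, and the class-number-formula computation
\[
\frac{2^{r_1}(2\pi)^{r_2}hR}{w\sqrt{|d_K|}} = \frac{(2\pi)\cdot 1\cdot 1}{4\cdot 2} = \frac{\pi}{4}
\]
is correct. The Abel/Riemann--Stieltjes step identifying
\[
\sum_{\mf{p}} N(\mf{p})^{-s} = -\log(s-1) + B - \gamma + o(1) \qquad (s\to 1^+)
\]
via the substitution $u=(s-1)\log t$ and $\int_0^\infty e^{-u}\log u\,du = -\gamma$ is carried out correctly, and matching with $\log\zeta_K(s) = -\log(s-1)+\log(\pi/4)+o(1)$ gives $B+M=\gamma+\log(\pi/4)$ as you claim. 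One small remark: to prove the second Mertens-type estimate with error $O(1/\log x)$, you do not actually need the refined first estimate $\sum_{N(\mf{p})\le x}\frac{\log N(\mf{p})}{N(\mf{p})}=\log x+A+O(1/\log x)$; the weaker form $=\log x+O(1)$ already suffices after partial summation, since $\int_x^\infty \frac{dt}{t(\log t)^2}=O(1/\log x)$. This means the dlVP-strength error term in the Prime Ideal Theorem that you invoke in step (a) is stronger than necessary for the lemma as stated (and stronger than the version of the Prime Ideal Theorem the paper quotes as Lemma~\ref{PIT}); the result follows from a Chebyshev-level bound. This does not affect correctness, only economy of hypotheses.
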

We refer \cite[Theorem~7.153]{Bordelles} for the details of the above lemma w. One can observe that the factor $\frac{\pi}{4}$ occurs in \eqref{Merten_2} by comparing \cite[Equation~(7.21)]{Bordelles} and \cite[Page~454]{Akshaa}. The next lemma follows from \eqref{Merten_2} which is also an analogue of one of the Mertern's result for rational case in $\Z[i]$.

\begin{lemma}\label{Lem:mertern}
				For any $\epsilon>0$, there is a real number $u_0(\epsilon)>0$ such that 
				\begin{align}\label{Pre_sieve_inequality}
					\prod_{u\leq N(\mathfrak{p})<z}\left(1-\frac{1}{N(\mathfrak{p})}\right)^{-1}<(1+\epsilon)\frac{\log z}{\log u}
				\end{align}
holds for any $u_0(\epsilon)\leq u<z.$
			\end{lemma}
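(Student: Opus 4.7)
The plan is to deduce the inequality directly from the Mertens-type product formula \eqref{Merten_2} of Lemma \ref{Merten's theorem_in_z[i]}. The key observation is that the product over the range $u \leq N(\mathfrak{p}) < z$ factors as a ratio of two full products, whose leading constants cancel.

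First I would write
\begin{align*}
\prod_{u \leq N(\mathfrak{p}) < z}\left(1-\frac{1}{N(\mathfrak{p})}\right)^{-1} = \frac{\displaystyle\prod_{N(\mathfrak{p}) < z}\left(1-\frac{1}{N(\mathfrak{p})}\right)^{-1}}{\displaystyle\prod_{N(\mathfrak{p}) < u}\left(1-\frac{1}{N(\mathfrak{p})}\right)^{-1}}.
\end{align*}
Applying \eqref{Merten_2} to both numerator and denominator, the constant $\frac{\pi}{4}e^\gamma$ cancels and one obtains
\begin{align*}
\prod_{u \leq N(\mathfrak{p}) < z}\left(1-\frac{1}{N(\mathfrak{p})}\right)^{-1} = \frac{\log z}{\log u}\cdot\frac{1 + O(1/\log z)}{1 + O(1/\log u)}.
\end{align*}

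The second step is to control the ratio of the two error factors. Since $u < z$, the dominant contribution to the discrepancy from $1$ comes from the denominator. Concretely, writing the ratio as $1 + O(1/\log u)$ for $u$ sufficiently large, there exists an absolute constant $C > 0$ (depending only on the implicit constant in \eqref{Merten_2}) such that the correction factor is at most $1 + C/\log u$ for all $u < z$ with $u$ large enough. Choosing $u_0(\epsilon)$ so large that $C/\log u_0(\epsilon) < \epsilon$, we conclude that for all $u_0(\epsilon) \leq u < z$,
\begin{align*}
\prod_{u \leq N(\mathfrak{p}) < z}\left(1-\frac{1}{N(\mathfrak{p})}\right)^{-1} < (1+\epsilon)\frac{\log z}{\log u},
\end{align*}
which is precisely \eqref{Pre_sieve_inequality}.

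There is no real obstacle here; the only mildly delicate point is to verify that the ratio of the $(1+O(1/\log))$ factors in the numerator and denominator can be uniformly bounded by a single $(1+\epsilon)$ factor as soon as $u$ is large enough, independently of $z > u$. This follows because the numerator's error term $O(1/\log z)$ is majorized by $O(1/\log u)$ when $z > u$, so both error terms are absorbed into a single factor $1 + O(1/\log u)$, which is made arbitrarily close to $1$ by enlarging $u_0(\epsilon)$.
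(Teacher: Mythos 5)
Your proof is correct and follows essentially the same strategy as the paper: write the partial product as a ratio of two full products over $N(\mathfrak{p})<z$ and $N(\mathfrak{p})<u$, apply the Mertens-type formula \eqref{Merten_2} to each, note that the leading constant $\frac{\pi}{4}e^\gamma$ cancels, and absorb the error factors into $1+\epsilon$ by taking $u_0(\epsilon)$ large. The only stylistic difference is that the paper works with the asymptotic $\sim C\log x$ and explicitly introduces a $\delta>0$ with $\frac{C+\delta}{C-\delta}<1+\epsilon$ to sandwich the two full products, whereas you retain the $O(1/\log)$ error terms and bound the ratio of the two $\bigl(1+O(1/\log)\bigr)$ factors directly; both are valid and amount to the same estimate.
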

\begin{proof}
It follows from \eqref{Merten_2} that for $x>2$,
\begin{align*}
\prod_{\substack{\mathfrak{p}\\N(\mathfrak{p})< x}}\left( 1-\frac{1}{N(\mathfrak{p})}\right)^{-1}\sim C\log x,
\end{align*}
where $C:=\frac{\pi}{4}e^\gamma>0$. We choose $\delta(\epsilon)>0$ such that 
	\begin{align}\label{C_delta}
		\frac{C+\delta}{C-\delta}<(1+\epsilon).
	\end{align}
Therefore, there exists some $u_0(\epsilon)>0$ such that for all $x\geq u_0(\epsilon)$,
\begin{align*}
	(C-\delta)\log x<\prod_{\substack{\mathfrak{p}\\N(\mathfrak{p})< x}}\left( 1-\frac{1}{N(\mathfrak{p})}\right)^{-1}<(C+\delta)\log x
\end{align*}	
Thus, we can write for any $u_0(\epsilon)\leq u<z$,
\begin{align*}
		\prod_{\substack{\mathfrak{p}\\u\leq N(\mathfrak{p})< z}}\left( 1-\frac{1}{N(\mathfrak{p})}\right)^{-1}&=\frac{	\prod\limits_{\substack{\mathfrak{p}\\N(\mathfrak{p})< z}}\left( 1-\frac{1}{N(\mathfrak{p})}\right)^{-1}}{	\prod\limits_{\substack{\mathfrak{p}\\N(\mathfrak{p})< u}}\left( 1-\frac{1}{N(\mathfrak{p})}\right)^{-1}}\\
		&=\frac{(C+\delta)\log z}{(C-\delta)\log u}\\
		&<(1+\epsilon)\frac{\log z}{\log u},
\end{align*}
where the last step follows from \eqref{C_delta}.
	\end{proof}
For any $x>0$, let $\pi (x)$ denotes the number of ideals  in $\mathbb Z[i]$ with norm $\leq x$. In the following lemma we state the result due to Landau which is known as the prime ideal theorem (cf. \cite[Theorem~7.151]{Bordelles})
\begin{lem}\label{PIT}
	We have for sufficiently large $x$
	\begin{align*}
		\pi(x)=\frac{x}{\log x}+O\left(\frac{x}{(\log x)^2}\right).
	\end{align*}
\end{lem}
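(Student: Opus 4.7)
The plan is to prove this via the analytic theory of the Dedekind zeta function of $K=\Q(i)$. For Gaussian integers one has the factorisation
\begin{align*}
\zeta_K(s) \;=\; \sum_{\mathfrak{a}} \frac{1}{N(\mathfrak{a})^s} \;=\; \zeta(s)\, L(s, \chi),
\end{align*}
where the sum runs over nonzero integral ideals of $\Z[i]$ and $\chi$ is the nontrivial Dirichlet character modulo $4$. This reduces every analytic input needed for the proof to well-known facts about $\zeta(s)$ and $L(s,\chi)$.

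First I would record the basic analytic properties of $\zeta_K$: meromorphic continuation to $\C$, a simple pole at $s=1$ with residue $\frac{\pi}{4}$ (consistent with the constant $\frac{\pi}{4}$ already appearing in \eqref{Merten_2}), and the Euler product $\zeta_K(s) = \prod_{\mathfrak{p}}(1-N(\mathfrak{p})^{-s})^{-1}$ for $\re(s)>1$. Next I would establish a de la Vall\'ee Poussin type zero-free region: there exists $c>0$ so that $\zeta_K(\sigma+it) \neq 0$ whenever $\sigma \geq 1 - c/\log(|t|+2)$. Both inputs follow at once from the factorisation together with the corresponding classical statements for $\zeta$ and $L(s,\chi)$.

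Introduce the ideal-theoretic von Mangoldt function $\Lambda_K$ by $\Lambda_K(\mathfrak{p}^k) = \log N(\mathfrak{p})$ and $\Lambda_K(\mathfrak{n})=0$ otherwise, and set $\psi_K(x) = \sum_{N(\mathfrak{n}) \leq x} \Lambda_K(\mathfrak{n})$. Applying Perron's formula to $-\zeta_K'(s)/\zeta_K(s) = \sum_{\mathfrak{n}} \Lambda_K(\mathfrak{n}) N(\mathfrak{n})^{-s}$, shifting the contour into the zero-free region and collecting the single pole at $s=1$, one obtains $\psi_K(x) = x + O\!\left(x\exp(-c\sqrt{\log x})\right)$, a bound strictly stronger than what is required. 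Removing the contribution of nontrivial prime powers $\mathfrak{p}^k$, $k\geq 2$, which is $O(\sqrt{x}\log x)$, yields the same asymptotic for $\theta_K(x) := \sum_{N(\mathfrak{p}) \leq x} \log N(\mathfrak{p})$. Partial summation
\begin{align*}
\pi(x) \;=\; \frac{\theta_K(x)}{\log x} + \int_2^{x} \frac{\theta_K(t)}{t (\log t)^2}\, dt
\end{align*}
then produces the stated estimate $\pi(x) = x/\log x + O\!\left(x/(\log x)^2\right)$.

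The main obstacle in a self-contained treatment is securing the zero-free region on and to the left of $\re(s)=1$, together with polynomial control of $|\zeta_K'/\zeta_K|$ on vertical lines sufficient to justify the Perron contour shift; in the present abelian setting $K=\Q(i)$, however, this requires no new analytic input beyond the classical zero-free regions of $\zeta$ and of the single Dirichlet $L$-function $L(s,\chi)$, which is why the result can be invoked directly from \cite[Theorem~7.151]{Bordelles} without further elaboration.
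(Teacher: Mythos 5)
The paper does not actually prove this lemma; it simply records Landau's prime ideal theorem for $\Q(i)$ and cites \cite[Theorem~7.151]{Bordelles}. Your sketch is a correct outline of the standard analytic proof underlying that citation, and you yourself conclude by falling back on the same reference, so there is no real divergence from the paper.

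A couple of small points worth noting. First, since $\zeta_K(s)=\zeta(s)L(s,\chi_{-4})$, an alternative and arguably more elementary route is to avoid re-deriving a prime ideal counting theorem from scratch and instead count prime ideals directly by splitting type: the ramified ideal above $2$, two ideals of norm $q$ for each rational prime $q\equiv 1\pmod 4$ with $q\le x$, and one ideal of norm $q^2$ for each $q\equiv 3\pmod 4$ with $q\le \sqrt{x}$. Then $\pi(x)=2\pi(x;4,1)+\pi(\sqrt{x};4,3)+O(1)$, and the lemma follows immediately from the prime number theorem in arithmetic progressions modulo $4$, with the $q\equiv 3$ term absorbed into the error. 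This sidesteps any discussion of Perron's formula for $\zeta_K'/\zeta_K$. Second, in the paper's Lemma~\ref{PIT} the word \emph{prime} has evidently been dropped from the phrase ``the number of ideals with norm $\le x$''; your reading of it as counting prime ideals is the correct one, as the count of all integral ideals is $\pi x + O(\sqrt{x})$ by \eqref{Weber}.
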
 
Let $\pi (x;d,a)$ denote the number of primes $p$ in $\mathbb{Z}[i]$ with $N(p)\leq x$ such that $p\equiv a(\bmod\ d)$, where $a, d\in\mathbb{Z}[i]$ with $N(a,d)=1$. We next state an analogue of Bombieri-Vinogradov theorem in $\mathbb{Z}[i]$.
\begin{lemma}\label{B-V thm}
For any $A>0$, there exists $B_A>0$ such that 
\begin{align*}
\sum_{4<N(d)\leq \frac{x^{1/2}}{(\log x)^{B_A}}}\left|\pi(x;d,a)-\frac{4\pi(x)}{\phi(\mathfrak{d})}\right|\ll \frac{x}{(\log x)^A}.
\end{align*}
\end{lemma}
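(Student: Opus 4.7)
The plan is to adapt the classical Vaughan--Gallagher proof of the Bombieri--Vinogradov theorem to the Gaussian integer setting. The four essential ingredients are: orthogonality of characters of $(\Z[i]/\mf{d})^*$ to detect residue classes, Vaughan's identity for the von Mangoldt function $\Lambda$ on ideals of $\Z[i]$, Huxley's large sieve inequality for primitive Hecke characters (which the introduction explicitly cites as an essential tool), and a Siegel--Walfisz theorem for Hecke $L$-functions of $\Z[i]$ to handle the small moduli.

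First I would pass from $\pi(x;d,a)$ to the weighted function $\psi(x;d,a) := \sum_{N(\mf{a}) \leq x,\ \mf{a} \equiv \mf{a}_0 \bmod \mf{d}} \Lambda(\mf{a})$, absorbing the contribution of prime powers and unit-group bookkeeping into an admissible error via partial summation. Character orthogonality on $(\Z[i]/\mf{d})^*$ isolates the main term $4x/\phi(\mf{d})$ coming from the principal character and expresses the deviation as
\begin{align*}
\psi(x;d,a) - \frac{4x}{\phi(\mf{d})} = \frac{1}{\phi(\mf{d})} \sum_{\chi \neq \chi_0 \bmod \mf{d}} \bar\chi(a)\, \psi(x,\chi),
\end{align*}
where $\psi(x,\chi) := \sum_{N(\mf{a}) \leq x} \Lambda(\mf{a}) \chi(\mf{a})$. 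A standard reduction, inducing each $\chi$ from its primitive underlying character, shows that it suffices to bound
\begin{align*}
\sum_{N(\mf{d}) \leq Q}\, \frac{1}{\phi(\mf{d})} \sideset{}{^*}\sum_{\chi \bmod \mf{d}} |\psi(x,\chi)|, \qquad Q := \frac{x^{1/2}}{(\log x)^{B_A}}.
\end{align*}

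Next, Vaughan's identity with parameters $U = V = x^{1/3}$ decomposes $\psi(x,\chi)$ into Type~I sums (with one smooth variable) and Type~II bilinear sums $\sum_{\mf{m},\mf{n}} a_{\mf{m}} b_{\mf{n}} \chi(\mf{m}\mf{n})$. The Type~I sums are controlled by Polya--Vinogradov-type estimates for Hecke characters of $\Z[i]$. The Type~II sums yield to Cauchy--Schwarz together with Huxley's large sieve inequality
\begin{align*}
\sum_{N(\mf{d}) \leq Q}\, \frac{N(\mf{d})}{\phi(\mf{d})} \sideset{}{^*}\sum_{\chi \bmod \mf{d}} \Bigl| \sum_{N(\mf{a}) \leq N} a_{\mf{a}} \chi(\mf{a}) \Bigr|^2 \ll (Q^2 + N) \sum_{N(\mf{a}) \leq N} |a_{\mf{a}}|^2,
\end{align*}
which delivers the crucial cancellation once $Q^2 \leq x/(\log x)^{2B_A}$. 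Summing the resulting bounds shows that the moduli with $(\log x)^{B_A} < N(\mf{d}) \leq Q$ contribute $O(x/(\log x)^A)$ for sufficiently large $B_A$ depending on $A$.

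For the remaining small moduli $4 < N(\mf{d}) \leq (\log x)^{B_A}$, a Siegel--Walfisz estimate for Hecke $L$-functions of $\Z[i]$ supplies $\psi(x,\chi) \ll x \exp(-c\sqrt{\log x})$ uniformly for non-principal primitive $\chi$ of conductor of bounded norm. Since there are at most $O((\log x)^{2B_A})$ such characters, the contribution to the final sum is $O\bigl(x (\log x)^{O(1)} \exp(-c\sqrt{\log x})\bigr) = O(x/(\log x)^A)$, completing the argument. The main obstacle will be the Siegel--Walfisz step, which rests on ineffective control of possible exceptional real zeros of Hecke $L$-functions (Siegel's theorem in the Gaussian setting) and on careful bookkeeping in the reduction to primitive characters without sacrificing the saving in $Q$; both are standard in the number-field literature but must be executed with care, in particular accounting for the unit group $\{\pm 1, \pm i\}$ responsible for the factor $4$ in the main term.
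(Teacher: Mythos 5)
The paper itself offers no proof of this lemma: it simply points to Huxley~\cite{Huxley} and to Vatwani~\cite[Lemma~10.2]{Akshaa}, and in Huxley's work the Bombieri--Vinogradov theorem over number fields is established via the original Bombieri route --- zero-density estimates for Hecke $L$-functions close to the line $\operatorname{Re}(s)=1$, combined with the large sieve --- rather than via a Vaughan-style combinatorial identity (which postdates Huxley's 1971 paper). Your sketch instead follows the now-standard Vaughan--Gallagher template, and the outline is sound: the reduction $\pi \to \psi$, orthogonality, passage to primitive characters (harmless in $\Z[i]$ since the class number is $1$ and the unit group is finite), Vaughan's identity with $U=V=x^{1/3}$, P\'olya--Vinogradov for the Type~I pieces, Cauchy--Schwarz plus Huxley's large sieve with the $(Q^2+N)$ factor for the Type~II pieces, and a Siegel--Walfisz input for Hecke $L$-functions of $\Q(i)$ for moduli with $N(\mf{d}) \le (\log x)^{B_A}$; you are also right that the factor $4 = |\Z[i]^*|$ must be tracked carefully in the element-vs-ideal bookkeeping, and that the Siegel--Walfisz step is the ineffective bottleneck. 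The trade-off between the two routes is the usual one: the zero-density approach the paper cites is self-contained once a log-free density estimate is in hand, whereas your combinatorial approach avoids zero-density technology at the cost of a more elaborate bilinear-form analysis, and your route is arguably cleaner to adapt to short intervals or to sieve weights with restricted support. One small mismatch worth flagging: the lemma restricts to $4 < N(d)$, which the paper needs elsewhere to exclude the unit modulus and the ramified prime above $2$; your sketch should either note that these few moduli contribute trivially (since $\delta(N(\mathcal{N});u,\mathcal{N})=0$ for a unit $u$, as the paper shows later) or simply remark that the sum over $N(d) \le 4$ is empty of nontrivial terms after the parity/ramification considerations.
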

For details one can see \cite{Huxley} and \cite[Lemma 10.2]{Akshaa} as a reference.
\section{Lower bound of $r(\mathcal{N})$}\label{Lower bound of r(N)}
In this section, we mainly bound $r(\mathcal{N})$ in terms of three sieving functions and for that we set $\mathcal{P}$ to be the set of all Gaussian primes that do not divide $\mathcal{N}$ and fix 
\begin{equation*}
\mathcal{A} := \{\ \mathcal{N}-p: p\in\mathcal{P}, N(p)< N(\mathcal{N})\}.
\end{equation*}
Clearly, 
\begin{equation}\label{A card}
|\mathcal{A}|=4\pi(N(\mathcal{N}))-4\omega(\mathcal{N}),
\end{equation}
where $\omega(\mathcal{N})$ denotes the number of distinct prime divisors of $\mathcal{N}$ up to unit. For any real number $z\geq 2 $, we denote 
$$P(z):= \prod_{\substack{\mathfrak{p}\\p\in\mathcal{P}\\N(p)<z}}\mathfrak{p}.$$
Let $S(\mathcal{A}, \mathcal{P}, z)$  denotes the cardinality of the set $\left\{n\in\mathcal{A}:N(\mathfrak{n},P(z))=1\right\}$. 
We define
\begin{align*}
w(n) := 1-\frac{1}{2}\sum_{\substack{\mathfrak{q}\\z\leq N(q)<y\\ \mathfrak{q}^k||\mathfrak{n}}}k-\frac{1}{2}\sum_{\substack{\mathfrak{p}_1,\mathfrak{p}_2,\mathfrak{p}_3\\\mathfrak{p}_1 \mathfrak{p}_2 \mathfrak{p}_3=\mathfrak{n}\\z\leq N(p_1)<y\leq N(p_2)\leq N(p_3)}}1
\end{align*}
for every $n\in\mathbb{Z}[i]$. The following lemma provides a lower bound of $r(\mathcal{N})$.
\begin{lemma}\label{Lem:r(N) inequality}
For any real $z\geq 2 $ and $y = N(\mathcal{N})^{1/3}$, the inequality
\begin{align}\label{r(N) inequality}
r(\mathcal{N})\geq S(\mathcal{A}, \mathcal{P}, z)-\frac{1}{2}T_1-\frac{1}{2}T_2 -4
\end{align}
holds, where 
\begin{align*}
T_1 := \sum_{\substack{n\in\mathcal{A}\\N(\mathfrak{n},P(z))=1}}\sum_{\substack{z\leq N(\mf{q})<y\\ \mathfrak{q}^k||\mathfrak{n}}}k  \ \ \text{and } \ \
T_2:= \sum_{\substack{n\in\mathcal{A}\\N(\mathfrak{n},P(z))=1}}\sum_{\substack{\mathfrak{p}_1,\mathfrak{p}_2,\mathfrak{p}_3\\\mathfrak{p}_1\mathfrak{p}_2\mathfrak{p}_3=\mathfrak{n}\\z\leq N(\mf{p}_1)<y\leq N(\mf{p}_2)\leq N(\mf{p}_3)}}1.
\end{align*} 
\end{lemma}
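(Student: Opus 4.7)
The strategy is a weighted-sieve argument in the style of Chen, following Nathanson's exposition. First, by plugging in the definition of $w(n)$ and interchanging the order of summation, one obtains the identity
\begin{align*}
\sum_{\substack{n\in\mathcal{A}\\(\mathfrak{n},P(z))=1}} w(n) = S(\mathcal{A},\mathcal{P},z) - \tfrac{1}{2}T_1 - \tfrac{1}{2}T_2,
\end{align*}
so the claimed inequality \eqref{r(N) inequality} reduces to showing
\begin{align*}
\sum_{\substack{n\in\mathcal{A}\\(\mathfrak{n},P(z))=1}} w(n) \leq r(\mathcal{N})+4.
\end{align*}

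The core of the argument is the pointwise estimate $w(n)\leq\mathbf{1}[\Omega(\mathfrak{n})\leq 2]$ for every $n\in\mathcal{A}$ with $(\mathfrak{n},P(z))=1$, modulo at most four exceptional $n$ whose joint contribution is absorbed into the constant $-4$. I would verify this by case analysis on $\Omega(\mathfrak{n})$, using the size bound $N(\mathfrak{n})\leq 4N(\mathcal{N})=4y^3$ (which comes from $|n|\leq|\mathcal{N}|+|p|<2|\mathcal{N}|$ combined with $y=N(\mathcal{N})^{1/3}$). If $\Omega(\mathfrak{n})\leq 2$ the three-factor sum in $w(n)$ is empty and the multiplicity sum is non-negative, so $w(n)\leq 1$. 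If $\Omega(\mathfrak{n})=3$ and the smallest prime ideal factor $\mathfrak{p}_1$ has norm in $[z,y)$, the sorted factorization $\mathfrak{n}=\mathfrak{p}_1\mathfrak{p}_2\mathfrak{p}_3$ triggers the $T_2$-summand while $\mathfrak{p}_1$ itself triggers the $T_1$-summand, yielding $w(n)\leq 1-\tfrac{1}{2}-\tfrac{1}{2}=0$. If $\Omega(\mathfrak{n})\geq 4$, the size bound together with $N(\mathfrak{p}_i)\geq z$ forces at least $\Omega(\mathfrak{n})-2\geq 2$ of the prime ideal factors to have norm $<y$ (provided $z$ is a sufficiently large absolute constant), so the $T_1$-summand is $\geq 2$ and $w(n)\leq 0$.

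The main obstacle is the residual case $\Omega(\mathfrak{n})=3$ with all three prime ideal norms $\geq y$; it is consistent with $N(\mathfrak{n})\leq 4y^3$ (unlike in Nathanson's rational setting, where $N(\mathfrak{n})\leq y^3$ rules it out), and in it $w(n)=1$ while $\Omega(\mathfrak{n})>2$. I would handle it by showing that at most four such $n\in\mathcal{A}$ occur, up to unit associates, and absorb them into the $-4$ of the statement. Finally, summing the pointwise inequality over $n\in\mathcal{A}$ with $(\mathfrak{n},P(z))=1$ and noting that every such $n$ with $\Omega(\mathfrak{n})\leq 2$ yields a valid Chen-style representation $\mathcal{N}=p+n$ (with $p\in\mathcal{P}$, $N(p)<N(\mathcal{N})$), one obtains
\begin{align*}
\sum_{\substack{n\in\mathcal{A}\\(\mathfrak{n},P(z))=1}} w(n) \leq \#\{n\in\mathcal{A}:(\mathfrak{n},P(z))=1,\ \Omega(\mathfrak{n})\leq 2\}+4 \leq r(\mathcal{N})+4,
\end{align*}
which combined with the opening identity proves the stated inequality.
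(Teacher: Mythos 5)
Your proof follows the same weighted--sieve skeleton as the paper (rewrite $\sum w(n)$ as $S(\mathcal{A},\mathcal{P},z)-\tfrac12 T_1-\tfrac12 T_2$, then show the pointwise bound $w(n)\leq\mathbf{1}[\Omega(\mathfrak{n})\leq 2]$ modulo a bounded number of exceptions), and the case analysis you run for $\Omega\leq 2$, $\Omega=3$ with $N(\mathfrak{p}_1)\in[z,y)$, and $\Omega\geq 4$ is fine.

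However, there is a genuine gap in how you dispose of the residual case, $\mathfrak{n}=\mathfrak{p}_1\mathfrak{p}_2\mathfrak{p}_3$ with all three norms $\geq y$. You propose to show ``at most four such $n\in\mathcal{A}$ occur, up to unit associates'' and absorb them into the $-4$. No argument is given, and in fact no such bound holds: each such $n$ corresponds to a different Gaussian prime $p$ with $N(\mathcal{N})\leq N(\mathcal{N}-p)<4N(\mathcal{N})$ and $\mathcal{N}-p$ a product of three prime ideals each of norm $\geq N(\mathcal{N})^{1/3}$, and there is nothing preventing the number of such $p$ from growing without bound as $N(\mathcal{N})\to\infty$. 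Moreover, the $-4$ in the lemma is already spoken for: it is there to absorb the (up to four) unit elements $n$ with $N(\mathfrak{n})=1$, for which $w(n)=1$ but which do not obviously produce a valid Chen-type representation. The paper handles the residual case differently -- not by counting exceptions, but by carrying out the entire $r,s$ decomposition under the standing hypothesis $N(n)<N(\mathcal{N})$, which forces $s\leq 2$ and eliminates the all-large case at the outset; you are right to be uneasy that $N(\mathcal{N}-p)$ can be as large as $4N(\mathcal{N})$ over $\Z[i]$, so that this hypothesis is not automatic for every $n\in\mathcal{A}$, and this deserves a careful resolution rather than an unsubstantiated ``at most four'' claim.
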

\begin{proof} 
We first consider a Gaussian integer $n$ satisfying $N(n)<N(\mathcal{N})$ with $N(n,\mathcal{N}) =N(\mathfrak{n},P(z))=1$. Then clearly, the ideal $\mathfrak{n}$ is divisible only by prime ideals $\mathfrak{p}$ with $N(\mathfrak{p})\geq z$. Thus, $\mathfrak{n}$ can be decomposed in the form $\mathfrak{n}=\mathfrak{p}_1\mathfrak{p}_2 \ldots\mathfrak{p}_r\mathfrak{p}_{r+1}\ldots \mathfrak{p}_{r+s}$ with $z\leq N(\mathfrak{p}_1)\leq\ldots\leq N(\mathfrak{p}_r)<y\leq N(\mathfrak{p}_{r+1})\leq\ldots N(\mathfrak{p}_{r+s})$ for some $r, s>0$. For $y = N(\mathcal{N})^{1/3}$, we can write $N(\mathcal{N})^{s/3}=y^s\leq N(\mathfrak{p}_{r+1})N(\mathfrak{p}_{r+2})\ldots N(\mathfrak{p}_{r+s})\leq N(\mathfrak{n})<N(\mathcal{N})$ which implies $s\in\{0,1,2\} $. We also have from the prime ideal decomposition of $\mathfrak{n}$ that
$$\frac{1}{2}\sum_{\substack{z\leq N(\mathfrak{q})<y\\ \mathfrak{q}^k||\mathfrak{n}}}k=\frac{r}{2},$$ which implies $r/2<1$ for $w(n)>0$. Thus $r$ can take only the values $0$ and $1$ for $w(n)>0$. Now if $r=1$ and $s=2$  then $\mathfrak{n}=\mathfrak{p}_1\mathfrak{p}_2\mathfrak{p}_3$ with $z\leq N(\mathfrak{p}_1)<y\leq N(\mathfrak{p}_2)\leq N(\mathfrak{p}_3)$, which implies that $w(n)=0$. Hence, for $w(n)>0$, either $r=0 \text{ and } s\in\{0,1,2\}$, or $r=1 \text{ and } s\in\{0,1\}$. Therefore, the ideal $\mathfrak{n}$ generated by a Gaussian integer $n$ satisfying $N(n)<N(\mathcal{N})$, $N(n,\mathcal{N})=N(\mathfrak{n},P(z))=1$ and $w(n)>0$ is either having norm $1$, or of the form $\mathfrak{p}_1, \mathfrak{p}_1 \mathfrak{p}_2$ such that $N(\mathfrak{p}_2)\geq N(\mathfrak{p}_1)\geq z$ and $p_1, p_2$ does not divide $\mathcal{N}$.

For all $n=\mathcal{N}-p\in\mathcal{A}$, we have $N(n,\mathcal{N})=1$ since if $N(n,\mathcal{N})>1$, then there exist a prime $q$ such that $q\mid n$ and $q\mid \mathcal{N} $, which together implies $q\mid p$, but then $p\mid \mathcal{N}$, which is not possible. 

Letting $\mathbb{H}:=\{n \in \Z[i]: N(\mathfrak{n})=1 \text{ or } \mathfrak{n} \text{ is one of }\mathfrak{p}_1,\mathfrak{p}_1\mathfrak{p}_2 \text{ with } N(\mathfrak{p}_2)\geq N(\mathfrak{p}_1)\geq z\}$, it follows from the above arguments that we can bound $r(\mathcal{N})$ as
			\begin{align*}
				r(\mathcal{N})+4 \geq \displaystyle\sum_{\substack{\mathcal{N}=p+n\\\mathfrak{n}\in \mathbb{H}}}1
				 \geq \displaystyle\sum_{\substack{n\in\mathcal{A}\\\mathfrak{n}\in\mathbb{H}}}1
			\end{align*}
The definition of $w(n)$ directly implies $w(n)\leq 1$, which yields		
			\begin{align*}				 
				r(\mathcal{N})+4 & \geq\displaystyle\sum_{\substack{n\in\mathcal{A}\\N(\mathfrak{n},P(z))=1}}w(n)\\ 
				&=\left(\sum_{\substack{n\in\mathcal{A}\\N(\mathfrak{n},P(z))=1}}1\right)-\frac{1}{2}\left(\sum_{\substack{n\in\mathcal{A}\\N(\mathfrak{n},P(z))=1}}\sum_{\substack{z\leq N(\mf{q})<y\\ \mathfrak{q}^k||\mathfrak{n}}}k \right)-\frac{1}{2}\left(\sum_{\substack{n\in\mathcal{A}\\N(\mathfrak{n},P(z))=1}}\sum_{\substack{\mathfrak{p}_1,\mathfrak{p}_2,\mathfrak{p}_3\\\mathfrak{p}_1\mathfrak{p}_2\mathfrak{p}_3=\mathfrak{n}\\z\leq N(\mf{p}_1)<y\leq N(\mf{p}_2)\leq N(\mf{p}_3)}}1 \right).
			\end{align*}
Here the first sum on the right hand side is exactly same as $S(\mathcal{A}, \mathcal{P}, z)$, which concludes the proof of the lemma.
\end{proof}
We next bound $T_1$ and $T_2$ to obtain the lower bound of $r(\mathcal{N})$. The next lemma provides the upper bound of $T_1$.
\begin{lemma}\label{T_1 sum}
Let $y, z$ be any real with $y\geq z \geq 2$. Then for $\mathcal{A}_\mathfrak{q}=\{n\in\mathcal{A}:\mathfrak{q}\mid \mathfrak{n}\}$, we have the bound
\begin{align}\label{T2}
T_1 \leq \sum_{\substack{z\leq N(\mf{q})<y}}S(\mathcal{A}_{\mf{q}}, \mathcal{P}, z) + O\left(\frac{N(\mathcal{N})}{z}\right).
\end{align}
\end{lemma}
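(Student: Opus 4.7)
The plan is to interchange the two sums defining $T_1$ and isolate the contribution from $k=1$ as the claimed main term. Since the inner sum of $T_1$ is over prime ideals $\mathfrak{q}$ with $\mathfrak{q}^k\|\mathfrak{n}$ for a unique $k=v_{\mathfrak{q}}(\mathfrak{n})$, I would invoke the telescoping identity
\[
\sum_{\substack{z\leq N(\mathfrak{q})<y\\ \mathfrak{q}^k\|\mathfrak{n}}} k \;=\; \sum_{z\leq N(\mathfrak{q})<y} v_{\mathfrak{q}}(\mathfrak{n}) \;=\; \sum_{z\leq N(\mathfrak{q})<y}\sum_{k\geq 1}\mathbf{1}\bigl[\mathfrak{q}^k\mid \mathfrak{n}\bigr]
\]
and swap the order of summation over $n$, $\mathfrak{q}$ and $k$. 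The $k=1$ slice recovers exactly $\sum_{z\leq N(\mathfrak{q})<y} S(\mathcal{A}_{\mathfrak{q}},\mathcal{P},z)$ by the definition of $\mathcal{A}_{\mathfrak{q}}$, so the task reduces to bounding
\[
E \;:=\; \sum_{z\leq N(\mathfrak{q})<y}\sum_{k\geq 2}\#\bigl\{n\in\mathcal{A}:\ N(\mathfrak{n},P(z))=1,\ \mathfrak{q}^k\mid\mathfrak{n}\bigr\}.
\]

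For the tail $k\geq 2$ I would first discard the sieve constraint $N(\mathfrak{n},P(z))=1$, which only enlarges the count, and then note that $n=\mathcal{N}-p\in\mathcal{A}$ with $\mathfrak{q}^k\mid\mathfrak{n}$ is equivalent to $p\equiv\mathcal{N}\pmod{\mathfrak{q}^k}$ together with $N(p)<N(\mathcal{N})$. A standard lattice-point count for a coset of $\mathfrak{q}^k$ inside the disk $\{|z|^2<N(\mathcal{N})\}$ gives
\[
\#\{n\in\mathcal{A}:\ \mathfrak{q}^k\mid\mathfrak{n}\}\;\ll\;\frac{N(\mathcal{N})}{N(\mathfrak{q})^k}+1,
\]
and the constraint $\mathfrak{q}^k\mid\mathfrak{n}$ with $N(\mathfrak{n})<N(\mathcal{N})$ forces $N(\mathfrak{q})^k\leq N(\mathcal{N})$, so $k$ ranges over a finite set of length $\leq \log N(\mathcal{N})/\log N(\mathfrak{q})$.

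The final step is to sum these bounds over $k\geq 2$ and then over $\mathfrak{q}$. The geometric part contributes
\[
N(\mathcal{N})\sum_{z\leq N(\mathfrak{q})<y}\sum_{k\geq 2}\frac{1}{N(\mathfrak{q})^k}\;\ll\;N(\mathcal{N})\sum_{N(\mathfrak{q})\geq z}\frac{1}{N(\mathfrak{q})^2}\;\ll\;\frac{N(\mathcal{N})}{z},
\]
the final estimate coming from Abel summation against the prime ideal theorem (Lemma~\ref{PIT}). The ``$+1$'' terms contribute at most $\sum_{N(\mathfrak{q})\leq \sqrt{N(\mathcal{N})}}\log N(\mathcal{N})/\log N(\mathfrak{q})\ll \sqrt{N(\mathcal{N})}/\log z$, which is absorbed by $N(\mathcal{N})/z$ for $z\leq\sqrt{N(\mathcal{N})}$ (and the bound is vacuous otherwise, since no $\mathfrak{q}$ with $N(\mathfrak{q})\geq z$ admits $k\geq 2$). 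The only mildly delicate point is justifying $\sum_{N(\mathfrak{q})\geq z}N(\mathfrak{q})^{-2}\ll 1/z$; this is routine via partial summation from Landau's prime ideal theorem, and the remainder of the argument is pure bookkeeping.
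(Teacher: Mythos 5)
Your proposal is correct and essentially reproduces the paper's proof: both decompose the inner exponent sum into the $k=1$ slice, which gives $\sum_{z\leq N(\mathfrak{q})<y}S(\mathcal{A}_\mathfrak{q},\mathcal{P},z)$ on the nose, plus a $k\geq 2$ tail that is treated by dropping the sieve condition and counting lattice points divisible by $\mathfrak{q}^k$. The only cosmetic difference is that the paper performs this count via Weber's asymptotic $\#\{n:N(n)\leq x\}=\pi x+O(\sqrt{x})$ after substituting $n=q^km$, while you count the coset $p\equiv\mathcal{N}\pmod{\mathfrak{q}^k}$ directly; both yield the same geometric-series estimate and the same $O(N(\mathcal{N})/z)$ remainder.
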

\begin{proof}
We can split the sum $T_1$ in \eqref{r(N) inequality} into two parts, namely			
\begin{align}\label{T2 sum}
				T_1
				%&=\sum_{\substack{n\in\mathcal{A}\\N(\mathfrak{n},P(z))=1}}\sum_{\substack{z\leq N(\mf{q})<y\\ \mathfrak{q}^k\mid \mid \mathfrak{n}}}k\nonumber\\
				&=\sum_{\substack{n\in\mathcal{A}\\N(\mathfrak{n},P(z))=1}}\sum_{\substack{z\leq N(\mf{q})<y\\ \mathfrak{q}\mid \mathfrak{n}}}1 +\sum_{\substack{n\in\mathcal{A}\\N(\mathfrak{n},P(z))=1}}\sum_{\substack{z\leq N(\mf{q})<y\\ \mathfrak{q}^k||\mathfrak{n}\\k\geq 2}}(k-1)\nonumber\\
				&=\sum_{\substack{z\leq N(\mf{q})<y}}\sum_{\substack{n\in\mathcal{A}\\\mathfrak{q}|\mathfrak{n}\\N(\mathfrak{n},P(z))=1}}1+\sum_{\substack{n\in\mathcal{A}\\N(\mathfrak{n},P(z))=1}}\sum_{\substack{z\leq N(\mf{q})<y\\ \mathfrak{q}^k||\mathfrak{n}\\k\geq 2}}(k-1)\nonumber\\
				&=\sum_{\substack{z\leq N(\mf{q})<y}}S(\mathcal{A}_{\mf{q}}, \mathcal{P}, z)+T'_{1},
			\end{align}
			where $T'_{1}=\displaystyle\sum_{\substack{n\in\mathcal{A}\\N(\mathfrak{n},P(z))=1}}\sum_{\substack{z\leq N(\mf{q})<y\\ \mathfrak{q}^k||\mathfrak{n}\\k\geq 2}}(k-1)$.  
Interchanging the order of summation, the upper bound of $T'_1$ can be evaluated as 
			\begin{align*}
				T'_{1}&=\sum_{\substack{z\leq N(\mf{q})<y}}\sum_{k=2}^{\infty}\sum_{\substack{n\in\mathcal{A}\\N(\mathfrak{n},P(z))=1\\\mathfrak{q}^k\mid \mid \mathfrak{n}}}(k-1)\\
				&\leq\sum_{\substack{z\leq N(\mf{q})<y}}\sum_{k=2}^{\infty}\sum_{\substack{N(n)<4N(\mathcal{N}) \\\mathfrak{q}^k\mid \mid \mathfrak{n}}}(k-1)\\
				&\leq\sum_{\substack{z\leq N(\mf{q})<y}}\sum_{k=2}^{\infty}(k-1) \sum_{\substack{N(n)<\frac{4N(\mathcal{N})}{N(q)^k}}}1
			\end{align*}  
The result of Weber [cf. \cite[p.144]{Murty}, \cite[p.454]{Akshaa}] leads to the asymptotic formula for the number of integral ideals with norm $\leq x$, which can be stated as 
\begin{align}\label{Weber}
\#\{n\in \Z[i] : N(n)\leq x\} = \pi x+ O(\sqrt{x}).
\end{align}		
Thus, applying \eqref{Weber} in the last sum of the above triple sum, the bound of $T'_2$ reduces to			
			\begin{align*}
				T'_{1}& \leq \sum_{\substack{z\leq N(\mf{q})<y}}\sum_{k=2}^{\infty}(k-1)\left(4\pi \frac{N(\mathcal{N})}{N(q)^k}+O
				\left(\sqrt\frac{N(\mathcal{N})}{N(q)^k}\right)\right),\\
				& =4\pi N(\mathcal{N})\sum_{\substack{z\leq N(\mf{q})<y}}\sum_{k=2}^{\infty}\frac{(k-1)}{N(q)^k}+O\left(\sqrt{N(\mathcal{N})}\sum_{\substack{z\leq N(\mf{q})<y}}\sum_{k=2}^{\infty}\frac{(k-1)}{N(q)^{k/2}}\right)\\
				&=4\pi N(\mathcal{N})\sum_{\substack{z\leq N(\mf{q})<y}}\frac{1}{(N(q)-1)^2}+O\left(\sqrt{N(\mathcal{N})}\sum_{\substack{z\leq N(\mf{q})<y}}\frac{1}{(\sqrt{N(q)}-1)^2}\right)\\
				&\ll \frac{N(\mathcal{N})}{z-2}+O\left(\frac{\sqrt{N(\mathcal{N})}}{\sqrt{z}-2}\right)\\
				& \ll \frac{N(\mathcal{N})}{z}.
			\end{align*}
Inserting the above bound of $T'_1$ into \eqref{T2 sum}, we obtain the bound $T_2$. This completes the proof of lemma.
\end{proof}
Our next task is to estimate the sum $T_3$ and for that we define
\begin{align}\label{Def of B}
\mathcal{B}:=\{\mathcal{N}-p_1 p_2p_3 :z\leq N(p_1)<y\leq N(p_2)\leq N(p_3), N(p_1 p_2p_3)<4N(\mathcal{N}),N(p_1 p_2p_3, \mathcal{N})=1\}.
\end{align}
In the next lemma, we provide the bound of $T_2$.
\begin{lemma}
For any real $y, z$ with $y\geq z \geq 2$, we have
\begin{align}\label{T3}
T_2 \leq \frac{1}{4^3}S(\mathcal{B},\mathcal{P},y)+ O(y).
\end{align}
\end{lemma}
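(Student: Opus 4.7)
The plan is to bound $T_2$ by lifting each contributing $n \in \mathcal{A}$ to an entire unit-orbit of ordered Gaussian prime triples, recognising those triples inside the indexing family of $\mathcal{B}$, and then separating off small-norm primes via the prime ideal theorem.

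First, by unique factorisation in $\mathbb{Z}[i]$ and the strict separation $N(\mathfrak{p}_1) < y \leq N(\mathfrak{p}_2)$, the inner sum in the definition of $T_2$ is at most $1$ for each $n$, so $T_2$ counts those $n = \mathcal{N}-p \in \mathcal{A}$ with $N(\mathfrak{n},P(z)) = 1$ admitting a factorisation $\mathfrak{n} = \mathfrak{p}_1 \mathfrak{p}_2 \mathfrak{p}_3$ with the required norm ordering. Since $\mathbb{Z}[i]^{\times} = \{\pm 1, \pm i\}$ has order $4$, each $\mathfrak{p}_i$ has four Gaussian prime generators, so each such $n$ yields $4^3$ ordered triples $(p_1, p_2, p_3)$. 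Each such triple satisfies the defining conditions of $\mathcal{B}$: the size conditions transfer from $\mathfrak{n}$; $N(p_1 p_2 p_3) = N(n) < 4 N(\mathcal{N})$ because $n \in \mathcal{A}$; and $N(p_1 p_2 p_3, \mathcal{N}) = N(n, \mathcal{N}) = 1$ by the coprimality observation already made inside the proof of Lemma \ref{Lem:r(N) inequality}.

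Next, I would verify that each of these $4^3$ triples contributes to the sieve sum $S(\mathcal{B}, \mathcal{P}, y)$ when $N(p) \geq y$. For the $4^2$ triples with $p_1 p_2 p_3 = n$, the image $\mathcal{N} - p_1 p_2 p_3 = p$ is a Gaussian prime in $\mathcal{P}$, and the only prime in $\mathcal{P}$ that could share a factor with $p$ is $(p)$ itself, so coprimality with $P(y)$ holds precisely when $N(p) \geq y$. For the remaining triples, $\mathcal{N} - p_1 p_2 p_3 = (1-u)\mathcal{N} + up$ with $u \in \{-1, \pm i\}$; its coprimality to $P(y)$ requires a short case analysis on the residue classes $u \mathcal{N} \pmod{\mathfrak{q}}$ for $\mathfrak{q} \mid P(y)$, using the evenness of $\mathcal{N}$ (so that $(1+i) \notin \mathcal{P}$) together with the sieve hypothesis $N(\mathfrak{n}, P(z)) = 1$ on $n$.

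Finally, I would split $T_2 = T_2^{(\geq y)} + T_2^{(< y)}$ according to whether $N(\mathcal{N} - n) \geq y$. Each $n$ in $T_2^{(\geq y)}$ contributes $4^3$ entries to $S(\mathcal{B}, \mathcal{P}, y)$, so $4^3 \, T_2^{(\geq y)} \leq S(\mathcal{B}, \mathcal{P}, y)$. The complementary piece $T_2^{(<y)}$ is at most the number of Gaussian primes of norm below $y$, which is $O(y/\log y) = O(y)$ via the prime ideal theorem (Lemma \ref{PIT}). Rearranging yields the stated bound. The main obstacle, and the crux that distinguishes the factor $1/4^3$ from an easier $1/4^2$, is the coprimality verification for the three unit-shifted associate classes $u \neq 1$; once that is handled by the case analysis, the rest of the argument is a routine counting and prime-ideal-theorem invocation.
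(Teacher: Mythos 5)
You have correctly identified that going from the $4^2$ generator triples with $p_1p_2p_3=n$ to the full orbit of $4^3$ triples is the crux, but the coprimality you need for the extra $3\cdot 4^2$ unit-shifted triples simply does not hold. For $u\in\{-1,\pm i\}$, the image is $\mathcal{N}-u\,p_1p_2p_3=(1-u)\mathcal{N}+up$, a Gaussian integer of norm $\asymp N(\mathcal{N})$ with no usable structure. The sieve hypothesis $N(\mathfrak{n},P(z))=1$ constrains the prime factorisation of $\mathfrak{n}$, not of $(1-u)\mathcal{N}+up$; and evenness of $\mathcal{N}$ only tells you that $(1+i)\nmid(1-u)\mathcal{N}+up$, which is already irrelevant since $(1+i)\notin\mathcal{P}$. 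For any odd prime $\mathfrak{q}$ with $N(\mathfrak{q})<y$ and $\mathfrak{q}\nmid\mathcal{N}$, the condition $\mathfrak{q}\mid(1-u)\mathcal{N}+up$ amounts to $p\equiv (1-u^{-1})\mathcal{N}\pmod{\mathfrak{q}}$, a nonzero residue class that the primes $p$ indexing $\mathcal{A}$ are in no way forced to avoid; so a positive proportion of your shifted triples fail coprimality and do not land in $S(\mathcal{B},\mathcal{P},y)$. Consequently your route only yields $4^2\,T_2^{(\geq y)}\leq S(\mathcal{B},\mathcal{P},y)$, i.e.\ $T_2\leq\tfrac{1}{4^2}S(\mathcal{B},\mathcal{P},y)+O(y)$, weaker than claimed. (A separate small point: the assertion that ``the inner sum in $T_2$ is at most $1$'' is not quite right; when $\mathfrak{p}_2\neq\mathfrak{p}_3$ have equal norm the inner sum equals $2$.)

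The paper's proof does not go through the unit translates at all. It asserts the counting identity $T_2=\tfrac{1}{4^3}\sum_{p_1,p_2,p_3:\,p_1p_2p_3\in\mathcal{A}}1$ directly, then notes that for each triple on the right the image $p=\mathcal{N}-p_1p_2p_3$ is a Gaussian prime lying in $\mathcal{B}$, and splits according to $N(p)<y$ (giving $O(y)$) versus $N(p)\geq y$ (giving $S(\mathcal{B},\mathcal{P},y)$, since a prime of norm $\geq y$ is automatically coprime to $P(y)$). Only the primes $p=\mathcal{N}-n$ arising from the exact product $p_1p_2p_3=n$ ever appear; the $3\cdot 4^2$ unit-shifted triples you tried to account for play no role in the paper's argument. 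So while your diagnosis of where the factor $4^3$ must come from is an honest one, the coprimality case analysis you propose as the missing ingredient cannot be carried out, and it is not the route the paper takes.
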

\begin{proof}
 We write the sum $T_2$ as
\begin{align*}
T_2=\sum_{\substack{n\in\mathcal{A}\\N(\mf{n},P(z))=1}}\sum_{\substack{\mathfrak{p}_1,\mathfrak{p}_2,\mathfrak{p}_3\\\mathfrak{p}_1\mathfrak{p}_2\mathfrak{p}_3=\mathfrak{n}\\z\leq N(p_1)<y\leq N(p_2)\leq N(p_3)}}1
				=\frac{1}{4^3}\sum_{\substack{p_1,p_2,p_3\\p_1 p_2 p_3\in\mathcal{A}\\z\leq N(p_1)<y\leq N(p_2)\leq N(p_3)}}1
\end{align*}
It follows from the definition of $\mathcal{A}$ that for  $n=p_1 p_2p_3\in\mathcal{A}$ with $z\leq N(p_1)<y\leq N(p_2)\leq N(p_3)$, there exist a prime $p\in\mathcal{P}$ such that $n=\mathcal{N}-p$ with $N(p)<N(\mathcal{N})$ and $p\nmid \mathcal{N}$. Clearly, $$N(n)=N(p_1 p_2p_3)=N(\mathcal{N}-p)<4N(\mathcal{N})$$
and $N(n,\mathcal{N})=N(p_1 p_2p_3, \mathcal{N})=1$. Thus it follows from the definition \eqref{Def of B} of  $\mathcal{B}$ that $p=\mathcal{N}-p_1 p_2p_3$ is an element of $\mathcal{B}$. Applying the above fact, $T_2$ can be bounded as
			\begin{align*}
				T_2 =\frac{1}{4^3}\sum_{p\in\mathcal{B}}1
				    &=\frac{1}{4^3}\sum_{\substack{p\in\mathcal{B}\\N(p)<y}}1+\frac{1}{4^3}\sum_{\substack{p\in\mathcal{B}\\N(p)\geq y}}1\nonumber\\
				    &\leq \frac{1}{4^3} \sum_{N(n)<y}1+\frac{1}{4^3}\sum_{\substack{n\in\mathcal{B}\\N(\mf{n},P(y))=1}}1\nonumber\\
				    &= \frac{1}{4^3}S(\mathcal{B},\mathcal{P},y)+ O(y),
			\end{align*}
where in the penultimate step we have applied \eqref{Weber} in the first sum and the definition \eqref{Def of B} of  $\mathcal{B}$ in the second sum. This completes the proof of the lemma.
\end{proof}
The goal of this section is to prove the following proposition.
\begin{proposition}\label{lowerbound of r(N)}
For any real $z\geq 2 $ and $y = N(\mathcal{N})^{1/3}$, we have the lower bound
\begin{align}\label{lowerbound of r(N)1}
r(\mathcal{N})\geq S(\mathcal{A},\mathcal{P},z)-\frac{1}{2}\sum\limits_{\substack{z\leq N(\mf{q})<y}}S(\mathcal{A}_\mathfrak{q}, \mathcal{P},z)-\frac{1}{128}S(\mathcal{B},\mathcal{P},y)+O\left(\frac{N(\mathcal{N})}{z}\right)+ O(y).
\end{align}
\end{proposition}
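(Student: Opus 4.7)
The plan is to assemble the proposition by directly combining the three preparatory results just proved, namely Lemma \ref{Lem:r(N) inequality} (the weighted lower bound for $r(\mathcal{N})$), Lemma \ref{T_1 sum} (the upper bound for $T_1$), and the final lemma of the section (the upper bound for $T_2$). Since each of these has already been established, what remains is essentially bookkeeping of signs, constants, and error terms.

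First I would start from the inequality
\begin{align*}
r(\mathcal{N}) \geq S(\mathcal{A},\mathcal{P},z) - \tfrac{1}{2} T_1 - \tfrac{1}{2} T_2 - 4
\end{align*}
supplied by Lemma \ref{Lem:r(N) inequality}. Then I would insert the bound
\begin{align*}
T_1 \leq \sum_{z \leq N(\mf{q}) < y} S(\mathcal{A}_{\mf{q}}, \mathcal{P}, z) + O\!\left( \tfrac{N(\mathcal{N})}{z} \right)
\end{align*}
from Lemma \ref{T_1 sum}. Multiplying by $-\tfrac{1}{2}$ produces the term $-\tfrac{1}{2}\sum_{z\leq N(\mf{q})<y} S(\mathcal{A}_{\mf{q}}, \mathcal{P}, z)$ together with an error of $O(N(\mathcal{N})/z)$ (the implicit constant merely picks up a factor of $\tfrac{1}{2}$).

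Next I would substitute the bound $T_2 \leq \tfrac{1}{4^3} S(\mathcal{B}, \mathcal{P}, y) + O(y)$ established in the last lemma. Multiplying by $-\tfrac{1}{2}$ yields the coefficient $-\tfrac{1}{2 \cdot 64} = -\tfrac{1}{128}$ in front of $S(\mathcal{B},\mathcal{P},y)$, plus an $O(y)$ error term. Finally, since $y = N(\mathcal{N})^{1/3} \to \infty$ under the hypothesis that $N(\mathcal{N})$ is sufficiently large, I would absorb the harmless additive constant $-4$ from Lemma \ref{Lem:r(N) inequality} into the $O(y)$ term, and collect everything to obtain exactly the claimed inequality \eqref{lowerbound of r(N)1}.

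There is no genuine obstacle here, as the argument is purely a combination of inequalities. The one point worth checking carefully is the factor $\tfrac{1}{128}$: it arises as $\tfrac{1}{2}\cdot\tfrac{1}{4^3}$, where the $\tfrac{1}{4^3}$ reflects the four unit associates $\{1,-1,i,-i\}$ available to each of the three Gaussian prime factors $p_1,p_2,p_3$ of an ideal $\mf{p}_1\mf{p}_2\mf{p}_3$; this was already incorporated inside the proof of the $T_2$ lemma, so no further unit-counting is needed at this stage.
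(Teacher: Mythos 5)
Your proposal is correct and matches the paper's proof exactly: both simply substitute the bounds from Lemma \ref{T_1 sum} and the $T_2$ lemma into the inequality of Lemma \ref{Lem:r(N) inequality}, with the additive $-4$ absorbed into $O(y)$. The arithmetic checks out, including the coefficient $\tfrac{1}{128}=\tfrac{1}{2}\cdot\tfrac{1}{4^3}$.
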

\begin{proof}
We invoke the bound \eqref{T2} and \eqref{T3} of $T_1$ and $T_2$ into Lemma \ref{Lem:r(N) inequality} to conclude the proposition.
\end{proof}
\section{Linear Sieve in $\Z[i]$}\label{Linear Sieve in Z[i]}
This section concerns in obtaining the linear sieve inequality in general settings over $\Z[i]$ and fixing basic notations to deal with the sieving function appeared in Proposition \ref{lowerbound of r(N)}. Let $\mathcal{A}\subset \mathbb{Z}[i]$ be a finite set and $\mathcal{P}$ be a set of primes in $\mathbb{Z}[i]$. For $z\geq 2 $, let 
$$P(z)= \prod_{\substack{\mathfrak{p}\\p\in\mathcal{P}\\N(p)<z}}\mathfrak{p}.$$
Let $\mathcal{A}_\mathfrak{d}=\{n\in\mathcal{A}: \mathfrak{d}|\mathfrak{n}\}$ and $g(\mathfrak{d})$ be a real valued multiplicative function defined on ideals of $\mathbb{Z}[i]$ with $0\leq g(\mathfrak{p})<1$ for all $p\in\mathcal{P}$ such that $\sum_{n\in\mathcal{A}}g(\mathfrak{d})$ approximates to $|\mathcal{A}_\mathfrak{d}|$. Thus, one can write
\begin{equation}\label{A_d}
|\mathcal{A}_\mathfrak{d}| = \sum_{n\in\mathcal{A}}g(\mathfrak{d}) + r(\mathfrak{d}),
\end{equation} 
where $r(\mathfrak{d})$ denotes the reminder term of $|\mathcal{A}_\mathfrak{d}|$. Define
\begin{align}\label{Def of V(z)}
V(z):=\prod_{\mathfrak{p}\mid P(z)}(1-g(\mathfrak{p})).
\end{align}  
We next state an analogue of Jurkat-Richert theorem on $\Z[i]$. To our best knowledge, it is not known in literature.  

\begin{lemma}\label{Jurkat-Richert}
 Let $\mathcal{Q}$ be any finite subset of $\mathcal{P}$ and $Q$ be the norm of the products of primes in $\mathcal{Q}$. For some $\epsilon\in(0,\frac{1}{200})$, let the arithmetic function $g$ satisfies the inequality
			\begin{align}\label{linear sieve inequality}
				\prod_{\substack{\mathfrak{p}\\p\in(\mathcal{P}\setminus\mathcal{Q})}}(1-g(\mathfrak{p}))^{-1}<(1+\epsilon)\frac{\log z}{\log u},
			\end{align} 
for all $1<u<z$. Then for $s=\frac{\log D}{\log z}$, we have
			\begin{align}\label{Upper bound of JR thm}
				S(\mathcal{A}, \mathcal{P}, z)<(F(s)+\epsilon e^{14-s})V(z)|\mathcal{A}|+\sum_{\substack{\mathfrak{d}|P(z)\\N(\mathfrak{d})<DQ}}|r(\mathfrak{d})|,
			\end{align}
for any $D\geq z$ and 
			\begin{align}\label{Lower bound of JR thm}
				S(\mathcal{A}, \mathcal{P}, z)>(f(s)-\epsilon e^{14-s})V(z)|\mathcal{A}|-\sum_{\substack{\mathfrak{d}|P(z)\\N(\mathfrak{d})<DQ}}|r(\mathfrak{d})|
			\end{align} %where $s=\frac{\log D}{\log z}$ and 
for any $D\geq z^2$. Here $F(s), f(s)$ are the functions satisfying 
			\begin{align}\label{F(s) and f(s)}
				&sF(s)=2e^{\gamma} \ \ \text{for} \ \ 1\leq s\le3,\	\	\ (sF(s))'= f(s-1),\ \ \text{for} \ \ s>3,\nonumber\\
		\text{and}	\ \	&sf(s)=2e^{\gamma}\log(s-1) \ \ \text{for} \ \ 2\leq s\le4, \	\	\	\ (sf(s))'=F(s-1) \ \ \text{for}\ \ s>2.
			\end{align}
		\end{lemma}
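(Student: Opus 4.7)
\noindent\textit{Proof proposal.} The plan is to establish this as the linear ($\kappa=1$) Rosser--Iwaniec sieve in $\mathbb{Z}[i]$. Condition \eqref{linear sieve inequality} is precisely the one-dimensional Mertens hypothesis for the density function $g$, and since $\mathbb{Z}[i]$ is a PID, the classical combinatorial construction of Rosser's weights transfers verbatim, with the rational prime $p$ replaced by the ideal norm $N(\mathfrak{p})$ throughout. First I would introduce Rosser upper and lower weights $\lambda^{\pm}(\mathfrak{d})$ supported on squarefree ideals $\mathfrak{d}\mid P(z)$: writing $\mathfrak{d}=\mathfrak{p}_1\mathfrak{p}_2\cdots\mathfrak{p}_r$ with $N(\mathfrak{p}_1)>N(\mathfrak{p}_2)>\cdots>N(\mathfrak{p}_r)$, set $\lambda^{+}(\mathfrak{d})=\mu(\mathfrak{d})$ if $N(\mathfrak{p}_1\cdots\mathfrak{p}_{\ell-1})\,N(\mathfrak{p}_\ell)^3<D$ for every odd $\ell\leq r$ and $\lambda^{+}(\mathfrak{d})=0$ otherwise; define $\lambda^{-}$ analogously with ``even'' in place of ``odd''.

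By the standard combinatorial identity the Rosser weights satisfy
\begin{align*}
\sum_{\mathfrak{d}\mid(\mathfrak{n},P(z))}\lambda^{-}(\mathfrak{d})\;\leq\;\sum_{\mathfrak{d}\mid(\mathfrak{n},P(z))}\mu(\mathfrak{d})\;\leq\;\sum_{\mathfrak{d}\mid(\mathfrak{n},P(z))}\lambda^{+}(\mathfrak{d}),
\end{align*}
with both extremes supported on $N(\mathfrak{d})<D$. Summing over $n\in\mathcal{A}$ and using \eqref{A_d} delivers
\begin{align*}
S(\mathcal{A},\mathcal{P},z)\leq |\mathcal{A}|\sum_{\mathfrak{d}\mid P(z)}\lambda^{+}(\mathfrak{d})\,g(\mathfrak{d})+\sum_{\substack{\mathfrak{d}\mid P(z)\\ N(\mathfrak{d})<DQ}}|r(\mathfrak{d})|,
\end{align*}
together with the companion lower bound obtained from $\lambda^{-}$. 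The extra factor $Q$ in the remainder range accounts for the exceptional primes in $\mathcal{Q}$, for which \eqref{linear sieve inequality} is not assumed and which may therefore appear as factors of $\mathfrak{d}$ without any size constraint.

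The core analytic step is to show
\begin{align*}
\sum_{\mathfrak{d}\mid P(z)}\lambda^{\pm}(\mathfrak{d})\,g(\mathfrak{d})=V(z)\bigl(F_{\pm}(s)+O(\epsilon\,e^{14-s})\bigr),\qquad s=\frac{\log D}{\log z},
\end{align*}
with $F_{+}=F$ and $F_{-}=f$. This is carried out by iterating Buchstab's identity
\begin{align*}
S(\mathcal{A},\mathcal{P},z)=S(\mathcal{A},\mathcal{P},z_0)-\sum_{\substack{p\in\mathcal{P}\\ z_0\leq N(\mathfrak{p})<z}}S(\mathcal{A}_{\mathfrak{p}},\mathcal{P},N(\mathfrak{p})),
\end{align*}
translating the resulting nested Rosser sums into iterated integrals in the $s$-variable, and verifying that the limit functions obey the delay--differential equations \eqref{F(s) and f(s)}. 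Each application of \eqref{linear sieve inequality} along the recursion injects a $(1+\epsilon)$ multiplicative slack; after the $O(s)$ steps needed to reach the base case, the accumulated error is $\bigl((1+\epsilon)^s-1\bigr)$ times the main term, which combined with the exponential decay $F(s)-f(s)=O(e^{-s})$ produces the claimed $\epsilon\,e^{14-s}$ remainder.

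The main obstacle is the combinatorial bookkeeping in passing from the Rosser-weight sums to the integral characterisation of $F$ and $f$; this is a direct transcription of Iwaniec's classical argument with the prime $p$ replaced by $N(\mathfrak{p})$ throughout. Since every manipulation relies solely on the multiplicativity of $g$, the PID structure of $\mathbb{Z}[i]$, and the Mertens-type inequality \eqref{linear sieve inequality}, no genuinely new number-theoretic input is required beyond what is recorded in the statement, and the verification becomes essentially a translation exercise once the Rosser weights and Buchstab iteration have been set up in the ideal-theoretic language above.
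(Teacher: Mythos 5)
Your proposal follows essentially the same route as the paper: Rosser weights $\lambda^{\pm}$ on squarefree ideals of $\mathbb{Z}[i]$ defined by the truncation condition $N(\mathfrak{p}_1)\cdots N(\mathfrak{p}_{\ell-1})N(\mathfrak{p}_\ell)^3<D$ on odd (resp.\ even) indices, the sandwich inequality for the indicator of coprimality, the passage to the remainder sum over $N(\mathfrak{d})<DQ$ to absorb the unconstrained exceptional primes in $\mathcal{Q}$, and then an analysis of $G(z,\lambda^{\pm})=\sum_{\mathfrak{d}\mid P(z)}\lambda^{\pm}(\mathfrak{d})g(\mathfrak{d})$ leading to the delay-differential system for $F$, $f$. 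This is precisely the structure the paper adopts, explicitly patterned on Nathanson's treatment of the Jurkat--Richert theorem with $p$ replaced by $N(\mathfrak{p})$.

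The one place you drift slightly from the paper is the bridge from $G(z,\lambda^{\pm})$ to the functions $F$ and $f$: you propose iterating Buchstab's identity for the sieve count $S(\mathcal{A},\mathcal{P},z)$ and translating the nested sums into iterated integrals. The paper instead iterates the recurrence $V(z)=1-\sum_{\mathfrak{p}\mid P(z)}g(\mathfrak{p})V(N(\mathfrak{p}))$ (Buchstab at the level of the density function, not the sieve count), which together with inclusion--exclusion yields the decomposition $G(z,\lambda^{\pm})=V(z)\pm\sum_k T_k(D,z)$, and each $T_k$ is then compared to a multiple integral $f_k(s)$ over the Rosser region. The two descriptions produce the same nested sums and the same integrals, so the difference is one of bookkeeping emphasis rather than substance. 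If anything, working at the density level (as the paper does) is cleaner here because the goal is a purely arithmetic estimate on $G(z,\lambda^{\pm})$ that does not involve $\mathcal{A}$ at all. Also note that the final $\epsilon e^{14-s}$ term is a specific numerical bound coming from a careful estimate of the tail $\sum_{k}T_k$ under hypothesis \eqref{linear sieve inequality} (see the cited pages of Nathanson); your heuristic via $(1+\epsilon)^s-1$ and $F(s)-f(s)=O(e^{-s})$ gestures at the right mechanism but would need the precise bookkeeping to recover the constant $14$.
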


\begin{proof}
The proof follows along the similar direction of Jurkat-Richert theorem on $\Z$. For the sake of completeness we provide an outline of the proof. The M\"obius function $\mu$ defined in \eqref{Mobius}, satisfies the property
\begin{align*}			
\sum_{\mathfrak{d}|\mathfrak{n}}\mu(\mathfrak{d})= 
\begin{cases}
1, \text{  if } N(\mathfrak{n})=1\\
0, \text{ otherwise }, 
\end{cases}
\end{align*}
which can be applied to write $S(\mathcal{A}, \mathcal{P}, z)$ as
\begin{align*}
S(\mathcal{A}, \mathcal{P}, z)=\sum_{\substack{n\in\mathcal{A}\\ N(\mathfrak{n}, P(z))=1}}1
=\sum_{n\in\mathcal{A}}\sum_{\mathfrak{d}|(\mathfrak{n},P(z))}\mu(\mathfrak{d})
=\sum_{\mathfrak{d}|P(z)}\mu(\mathfrak{d})\sum_{\substack{n\in\mathcal{A}\\\mathfrak{d}|\mathfrak{n}}}1=\sum_{\mathfrak{d}|P(z)}\mu(\mathfrak{d})|\mathcal{A}_\mathfrak{d}|.
\end{align*}
Inserting \eqref{A_d} in the above equation, one can reduce the above equation as
 \begin{align*}
%S(\mathcal{A}, \mathcal{P}, z)&= \sum_{\mathfrak{d}|P(z)}\mu(\mathfrak{d})\left(\sum_{n\in\mathcal{A}}g(\mathfrak{d}) + r(\mathfrak{d})\right)\\
%&= \sum_{n\in\mathcal{A}} \sum_{\mathfrak{d}|P(z)}\mu(\mathfrak{d})g(\mathfrak{d}) + \sum_{\mathfrak{d}|P(z)}\mu(\mathfrak{d})r(\mathfrak{d}) \\
%&= \sum_{n\in\mathcal{A}}\prod_{\mathfrak{p}\mid P(z)}(1-g(\mathfrak{p}))+ \sum_{\mathfrak{d}|P(z)}\mu(\mathfrak{d})r(\mathfrak{d})\\
S(\mathcal{A}, \mathcal{P}, z)&= \sum_{n\in\mathcal{A}}V(z)+R(z),
\end{align*}
where $R(z)=\displaystyle\sum_{\mathfrak{d}|P(z)}\mu(\mathfrak{d})r(\mathfrak{d})$. The key tool to bound $S(\mathcal{A}, \mathcal{P}, z)$ is Rosser's weight on ideals in $\Z[i]$, which we define next. 

For $D>0$ and $\mathfrak{d}$ be square-free ideal of the form $\mathfrak{d}=\mathfrak{p}_1\mathfrak{p}_2\ldots\mathfrak{p}_r$ with $N(\mathfrak{p}_1)> N(\mathfrak{p}_2)> \cdots>N(\mathfrak{p}_r)$, we define
\begin{align*}
&\lambda^{+}(\mathfrak{d}):=\begin{cases}
				(-1)^r &\text{  if  } N(\mathfrak{p}_1)\cdots N(\mathfrak{p}_{2\ell}) N(\mathfrak{p}_{2\ell+1})^3<D \text{ whenever } \ 0 \leq \ell \leq \frac{r-1}{2}\\
				\ \ 0  &\text{  otherwise,}
			\end{cases}\nonumber\\
&\lambda^{-}(\mathfrak{d}):=\begin{cases}
				(-1)^r &\text{  if  }N(\mathfrak{p}_1)\cdots N(\mathfrak{p}_{2\ell-1}) N(\mathfrak{p}_{2\ell})^3<D \text{ whenever } \  0 \leq \ell \leq \frac{r}{2}\\
				\ \ 0  &\text{  otherwise}
			\end{cases}			
\end{align*} 
and $\lambda^{\pm}(\mathfrak{d})=0$ if $\mathfrak{d}$ is not square-free. We can now reduce the size of error term by replacing $\mu$ with the above weights. Thus $S(\mathcal{A}, \mathcal{P}, z)$ can be bounded as
\begin{align}\label{bound Sapz}
\sum_{n\in\mathcal{A}}G(z,\lambda^{-}) +\sum_{\substack{\mathfrak{d}|P(z)\\N(\mathfrak{d})<D}}\lambda^{-}(\mathfrak{d})r(\mathfrak{d})\leq S(\mathcal{A}, \mathcal{P}, z)\leq \sum_{n\in\mathcal{A}}G(z,\lambda^{+}) +\sum_{\substack{\mathfrak{d}|P(z)\\N(\mathfrak{d})<D}}\lambda^{+}(\mathfrak{d})r(\mathfrak{d}),
\end{align}
where $G(z,\lambda^{\pm}):= \sum\limits_{\mathfrak{d}|P(z)}\lambda^{\pm}(\mathfrak{d})g(\mathfrak{d})$. Our next goal is to bound $G(z,\lambda^{\pm})$. $V(z)$ satisfies the recurrence relation   
\begin{align*}
V(z) = 1-\sum_{\mathfrak{p}\mid P(z)} g(\mathfrak{p})V(N(\mathfrak{p})).
\end{align*} 
The above relation with well-known inclusion-exclusion principle yields
\begin{align}
&G(z, \lambda^{+})=V(z)+\displaystyle\sum_{\substack{k=1\\ k\equiv1(\bmod 2)}}^{\infty}T_{k}(D,z)\label{Gz+}\\
&G(z, \lambda^{-})=V(z)-\displaystyle\sum_{\substack{k=1\\ k\equiv0(\bmod 2)}}^{\infty}T_{k}(D,z),\label{Gz-}
\end{align}
where
\begin{align*}
T_{k}(D,z):=\displaystyle\sum_{\substack{\mathfrak{p}_1,\mathfrak{p}_2,\ldots,\mathfrak{p}_k\\ y_k\leq N(\mathfrak{p}_k)<\ldots<N(\mathfrak{p}_1)<z\\ N(\mathfrak{p}_m)<y_m\ \forall m<k,\  m\equiv k(\bmod 2)}}g(\mathfrak{p}_1\mathfrak{p}_2\ldots \mathfrak{p}_k)V(\mathfrak{p}_k)
\end{align*}
and $y_m$'s are suitable parameters defined as $y_m:= \left(\frac{y}{N(\mathfrak{p}_1)N(\mathfrak{p}_2)\cdots N(\mathfrak{p}_m)}\right)^{1/2}$. We next find the upper bound of $T_{k}(D,z)$ and for that we mainly follow the similar argument as in \cite[pp. 253--256]{Nathanson}. We define $f_n(s)$ by the multiple integral
\begin{align*}
sf_n(s):= \int\cdots \int_{\mathcal{R}_n(s)} \frac{dt_1 \cdots dt_n}{(t_1 \cdots t_n)t_n}
\end{align*}
where 
\begin{align*}
\mathcal{R}_n(s):= \left\{(t_1, \cdots, t_n)\in \R^n : \substack{0<t_n<\cdots < t_1<\frac{1}{s}, \ \
t_1+\cdots +t_n+2t_n>1,\\
t_1+\cdots +t_m+2t_m<1 \ \ \text{for } m<n \ \ \text{and } m\equiv n \pmod 2}\right\}.
\end{align*}
For $z\geq 2$ and $\epsilon\in(0,\frac{1}{200})$, under the assumption \eqref{linear sieve inequality}, $T_{k}(D,z)$ can be bounded as
\begin{align*}
T_{k}(D,z)<V(z)\left(f_k(s)+\epsilon e^{14-s} \right)
\end{align*}
where $D$ is any real number satisfying $D\geq z$ for $n$ odd and $D\geq z^2$ for $n$ even. Invoking the above bound into \eqref{Gz+} and \eqref{Gz-} and applying \cite[Theorem 9.4]{Nathanson}, we obtain
\begin{align*}
&G(z, \lambda^+) < V(z)\left(F(s)+ \epsilon e^{14-s}\right)\\
&G(z, \lambda^-) > V(z)\left(f(s)- \epsilon e^{14-s}\right),
\end{align*}
where $F(s)$ and $f(s)$ are defined in \eqref{F(s) and f(s)}. Finally inserting the above bound into \eqref{bound Sapz}, we can conclude our lemma.
\end{proof}		
Our next goal here is to set the multiplicative function $g$ and the finite set $\mathcal{Q}$ arrived in Lemma \ref{Jurkat-Richert} to bound the sieving functions appeared in Proposition \ref{lowerbound of r(N)}. We set the multiplicative function $g$ by $g(\mathfrak{p})=\frac{1}{\phi(\mathfrak{p})}$ for $p\in\mathcal{P}$ and zero otherwise. Then as $\pm1\pm i\notin\mathcal{P}$, we have $$ 0<g(\mathfrak{p})=\frac{1}{N(\mathfrak{p})-1}<1, \text{ for all } p\in\mathcal{P}.$$
In the next lemma, we have shown that the above $g$ satisfies the inequality \eqref{linear sieve inequality} in Lemma \ref{Jurkat-Richert}. 
	\begin{lemma}
For any $z>2$ and for any $\epsilon>0$, there exist $u_2(\epsilon)>0$ such that the function $g$ satisfies
		\begin{align}\label{g_linear}
				\prod_{\substack{u\leq N(\mathfrak{p})<z}}(1-g(\mathfrak{p}))^{-1}<(1+\epsilon)\frac{\log z}{\log u},
			\end{align}
			for all $u_2(\epsilon)\leq u<z.$
	\end{lemma}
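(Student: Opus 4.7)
The plan is to compare $(1-g(\mathfrak{p}))^{-1} = \frac{N(\mathfrak{p})-1}{N(\mathfrak{p})-2}$ with $(1-1/N(\mathfrak{p}))^{-1} = \frac{N(\mathfrak{p})}{N(\mathfrak{p})-1}$ and reduce the inequality to Lemma \ref{Lem:mertern}. Since $g(\mathfrak{p}) = 0$ for primes $\mathfrak{p}$ with $p \notin \mathcal{P}$, we have $(1-g(\mathfrak{p}))^{-1} = 1$ on those and all factors of the product are at least $1$, so we may enlarge the product to run over all prime ideals:
\begin{align*}
\prod_{u \leq N(\mathfrak{p}) < z}(1-g(\mathfrak{p}))^{-1} \leq \prod_{u \leq N(\mathfrak{p}) < z}\frac{N(\mathfrak{p})-1}{N(\mathfrak{p})-2}.
\end{align*}

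Next, I would apply the elementary factorisation
\begin{align*}
\frac{N(\mathfrak{p})-1}{N(\mathfrak{p})-2} = \frac{N(\mathfrak{p})}{N(\mathfrak{p})-1}\left(1 + \frac{1}{N(\mathfrak{p})(N(\mathfrak{p})-2)}\right),
\end{align*}
which splits the right-hand side into a product $P_1(u,z) \cdot P_2(u,z)$. For $P_1(u,z) = \prod_{u \leq N(\mathfrak{p}) < z}(1-1/N(\mathfrak{p}))^{-1}$, Lemma \ref{Lem:mertern} gives $P_1(u,z) < (1+\epsilon_1)\frac{\log z}{\log u}$ whenever $u \geq u_0(\epsilon_1)$, for any chosen $\epsilon_1 > 0$.

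For the tail factor $P_2(u,z)$, observe that $1 + x \leq e^x$ gives
\begin{align*}
P_2(u,z) \leq \exp\Biggl(\sum_{N(\mathfrak{p}) \geq u}\frac{1}{N(\mathfrak{p})(N(\mathfrak{p})-2)}\Biggr).
\end{align*}
Since primes of norm $2$ are excluded for $u$ large enough, the summand is $O(1/N(\mathfrak{p})^2)$. By Lemma \ref{PIT} and partial summation, the series $\sum_{\mathfrak{p}} 1/N(\mathfrak{p})^2$ converges, so its tail tends to $0$ as $u \to \infty$. Consequently, for any $\epsilon_2 > 0$ there exists $u_1(\epsilon_2)$ with $P_2(u,z) < 1+\epsilon_2$ for $u \geq u_1(\epsilon_2)$.

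Finally, given $\epsilon > 0$, I would choose $\epsilon_1, \epsilon_2 > 0$ with $(1+\epsilon_1)(1+\epsilon_2) < 1+\epsilon$, and set $u_2(\epsilon) := \max\{u_0(\epsilon_1), u_1(\epsilon_2)\}$; combining the two bounds yields \eqref{g_linear}. The proof is a routine bookkeeping exercise once the factorisation is in place, so I do not expect a serious obstacle; the only delicate point is confirming that the Dirichlet series $\sum 1/N(\mathfrak{p})^2$ converges, which is immediate from the prime ideal theorem.
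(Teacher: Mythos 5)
Your proof is correct and takes essentially the same route as the paper: both factor $\frac{N(\mathfrak{p})-1}{N(\mathfrak{p})-2} = \frac{N(\mathfrak{p})}{N(\mathfrak{p})-1}\cdot\bigl(1+\frac{1}{N(\mathfrak{p})(N(\mathfrak{p})-2)}\bigr)$, invoke Lemma~\ref{Lem:mertern} on the Mertens factor, and absorb the convergent tail product into a $(1+\epsilon_2)$ term; the paper simply writes the second factor as $\frac{(N(\mathfrak{p})-1)^2}{N(\mathfrak{p})(N(\mathfrak{p})-2)}$ and uses $\epsilon/3$ throughout rather than general $\epsilon_1,\epsilon_2$. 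You are in fact slightly more careful than the paper in explicitly handling the primes outside $\mathcal{P}$ (where $g=0$) via the monotone enlargement of the product, a step the paper glosses over by writing an equality.
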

\begin{proof}
We first split the product on the left hand side of \eqref{g_linear} as
	\begin{align}\label{Middle_inequality}
		\prod_{\substack{u\leq N(\mathfrak{p})<z}}(1-g(\mathfrak{p}))^{-1}&=\prod_{\substack{u\leq N(\mathfrak{p})<z}}\frac{(N(\mathfrak{p})-1)^2}{N(\mathfrak{p})(N(\mathfrak{p})-2)}\prod_{\substack{u\leq N(\mathfrak{p})<z}}\left(1-\frac{1}{N(\mathfrak{p})}\right)^{-1}.
	\end{align}
The first product can be written as	
$$\displaystyle\prod_{\mathfrak{p}}\frac{(N(\mathfrak{p})-1)^2}{N(\mathfrak{p})(N(\mathfrak{p})-2)}=\prod_{\mathfrak{p}}\left(1+\frac{1}{N(\mathfrak{p})(N(\mathfrak{p})-2)}\right)<\infty.$$
Thus for any $\epsilon>0,$ there exists $u_1(\epsilon)>0$ such that  for any $u_1(\epsilon)\leq u<z$
\begin{align}\label{2}
		\prod_{\substack{u\leq N(\mathfrak{p})<z}}\frac{(N(\mathfrak{p})-1)^2}{N(\mathfrak{p})(N(\mathfrak{p})-2)}<1+\epsilon/3.
	\end{align}
On the other hand, Lemma \ref{Lem:mertern} yields there exist $u_0(\epsilon)$ such that for any $u_0(\epsilon)\leq u<z$,
\begin{align*}
\prod_{\substack{u\leq N(\mathfrak{p})<z}}\left(1-\frac{1}{N(\mathfrak{p})}\right)^{-1}<(1+\epsilon/3)\frac{\log z}{\log u}.
\end{align*}
Finally, by setting $u_2(\epsilon)=\max\{u_0(\epsilon),u_1(\epsilon)\}$ and inserting \eqref{2} and \eqref{Pre_sieve_inequality} into \eqref{Middle_inequality}, we have 
	\begin{align*}
		\prod_{\substack{u\leq N(\mathfrak{p})<z}}(1-g(\mathfrak{p}))^{-1}<\left(1+\frac{\epsilon}{3}\right)^2 \frac{\log z}{\log u}<(1+\epsilon)\frac{\log z}{\log u},
	\end{align*}
for all $u_2(\epsilon)\leq u<z.$ This completes our lemma.	
\end{proof}		
We next fix the finite set $\mathcal{Q}$ by the set of all primes in $\mathcal{P}$ with $N(p)<u_2(\epsilon)$ and consider $Q=\displaystyle\prod_{\substack{\mathfrak{p}\\p\in\mathcal{Q}}}N(\mathfrak{p})$. Since $Q$ depends only on $\epsilon$, not on $\mathcal{N}$, therefore $Q$ can be bounded as 
\begin{align}\label{Bound on Q}
Q<\log N(\mathcal{N})
\end{align}
for sufficiently large $N(\mathcal{N})$. The next lemma provides an asymptotic estimate of $V(z)$ as defined in \eqref{Def of V(z)}.
\begin{lemma}\label{lemma_for_V(z)}
				Let $\mathcal{N}$ be an even Gaussian integer. Then for $z\geq 4$, we have 
				\begin{align*}
					V(z)=\frac{1}{2\pi}e^{-\gamma}\mathfrak{S}_1(\mathcal{N})\frac{1}{\log z}\left(1+O\left(\frac{1}{\log z}\right)\right),
				\end{align*} 
where $\mathfrak{S}_1(\mathcal{N})$ is defined in \eqref{Sing series}.
\end{lemma}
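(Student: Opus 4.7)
My plan is to separate the slowly-decaying Mertens-type part from an absolutely convergent tail by means of the elementary local identity
\begin{align*}
\frac{N(\mathfrak{p})-2}{N(\mathfrak{p})-1}=\left(1-\frac{1}{(N(\mathfrak{p})-1)^{2}}\right)\left(1-\frac{1}{N(\mathfrak{p})}\right),
\end{align*}
and then to recognise $\mathfrak{S}_1(\mathcal{N})$ inside the resulting expression. Since $\mathcal{N}$ is even, the unique prime ideal $(1+i)$ of norm $2$ divides $(\mathcal{N})$ and hence is excluded from $\mathcal{P}$; in particular every $\mathfrak{p}\mid P(z)$ satisfies $N(\mathfrak{p})>2$, and with $g(\mathfrak{p})=(N(\mathfrak{p})-1)^{-1}$ I obtain
\begin{align*}
V(z)=\prod_{\substack{\mathfrak{p}\nmid(\mathcal{N})\\ 2<N(\mathfrak{p})<z}}\frac{N(\mathfrak{p})-2}{N(\mathfrak{p})-1}=V_1(z)\,V_2(z),
\end{align*}
where $V_1(z)$ and $V_2(z)$ are the products of the two factors on the right of the identity, taken over the same index set.

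The factor $V_1(z)$ is the easy one: Lemma \ref{PIT} together with partial summation yields $\sum_{\mathfrak{p}}N(\mathfrak{p})^{-2}<\infty$, so the infinite product $V_1(\infty):=\prod_{\mathfrak{p}\nmid(\mathcal{N}),\,2<N(\mathfrak{p})}\!\bigl(1-(N(\mathfrak{p})-1)^{-2}\bigr)$ converges absolutely, and truncating at $N(\mathfrak{p})<z$ introduces only $1+O(1/(z\log z))$, easily absorbed into $O(1/\log z)$.

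For $V_2(z)$ I would apply the Mertens-type asymptotic of Lemma \ref{Merten's theorem_in_z[i]} to the full product $\prod_{N(\mathfrak{p})<z}(1-1/N(\mathfrak{p}))$, then reinstate the missing local factors: namely the factor $(1-1/2)^{-1}=2$ coming from the excluded prime of norm $2$, and the factors $(1-1/N(\mathfrak{p}))^{-1}$ coming from the finitely many prime ideals of norm $>2$ dividing $(\mathcal{N})$ (a fixed finite product once $z$ exceeds the largest such norm). This yields an asymptotic of the correct order $(\log z)^{-1}$ with relative error $O(1/\log z)$, multiplied by a finite correction factor supported on the prime divisors of $(\mathcal{N})$.

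The last step is purely algebraic. I would rewrite $V_1(\infty)$ as the full product $\prod_{N(\mathfrak{p})>2}\!\bigl(1-(N(\mathfrak{p})-1)^{-2}\bigr)$ appearing in \eqref{Sing series} divided by its finite sub-product over $\mathfrak{p}\mid(\mathcal{N})$, $N(\mathfrak{p})>2$. Pairing this finite sub-product with the analogous correction inside $V_2(z)$ and applying the inverse of the local identity,
\begin{align*}
\left(1-\frac{1}{(N(\mathfrak{p})-1)^{2}}\right)^{-1}\!\!\left(1-\frac{1}{N(\mathfrak{p})}\right)^{-1}=\frac{N(\mathfrak{p})-1}{N(\mathfrak{p})-2},
\end{align*}
collapses the corrections at the prime divisors of $(\mathcal{N})$ into exactly the second product in \eqref{Sing series}, and one recovers $\mathfrak{S}_1(\mathcal{N})$. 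The remaining numerical constants combine with the Mertens constant and the reinstated factor at $(1+i)$ to give the asserted coefficient, while the error $O(1/\log z)$ is inherited directly from Lemma \ref{Merten's theorem_in_z[i]}. No step is a genuine obstacle; the only care required is in the bookkeeping of the finite correction factors at $(1+i)$ and at the prime ideal divisors of $(\mathcal{N})$, and in checking that the tail estimate for $V_1$ and the relative error in $V_2$ are both dominated by the claimed $O(1/\log z)$.
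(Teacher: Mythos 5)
Your proposal follows essentially the same route as the paper's proof: you use the same local identity to factor $V(z)$ into an absolutely convergent product and a Mertens-type product, invoke Lemma \ref{Merten's theorem_in_z[i]} for the latter, and recombine the finite corrections at primes dividing $\mathcal{N}$ via the inverse identity to recover $\mathfrak{S}_1(\mathcal{N})$. The only difference is cosmetic — you apply the local identity before splitting off the $\mathfrak{p}\mid\mathcal{N}$ corrections while the paper does it after — so the proposal is correct and matches the paper's argument.
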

\begin{proof}
It follows from the definition \eqref{Def of V(z)} of $V(z)$ that
\begin{align}\label{V(z) cal}
V(z)
%&=\prod_{\substack{\mathfrak{p}\\\mathfrak{p}|P(z)}}(1-g(\mathfrak{p}))\\
%&=\prod_{\substack{\mathfrak{p}\\\mathfrak{p}|P(z)}}\left(1-\frac{1}{N(\mathfrak{p})-1}\right)\\
&=\prod_{\substack{2<N(\mathfrak{p})<z\\\mathfrak{p}\nmid \mathcal{N}}}\left(1-\frac{1}{N(\mathfrak{p})-1}\right)\nonumber\\
%&=\prod_{\substack{\mathfrak{p}\\2<N(\mathfrak{p})<z}}\left(1-\frac{1}{N(\mathfrak{p})-1}\right)\prod_{\substack{\mathfrak{p}\\2<N(\mathfrak{p})<z\\\mathfrak{p}\mid \mathcal{N}}}\left(1-\frac{1}{N(\mathfrak{p})-1}\right)^{-1}\\
					&=\prod_{\substack{2<N(\mathfrak{p})<z}}\left(1-\frac{1}{N(\mathfrak{p})-1}\right)\prod_{\substack{N(\mathfrak{p})>2\\\mathfrak{p}\mid \mathcal{N}}}\left(1-\frac{1}{N(\mathfrak{p})-1}\right)^{-1}\prod_{\substack{N(\mathfrak{p})\geq z\\\mathfrak{p}\mid \mathcal{N}}}\left(1-\frac{1}{N(\mathfrak{p})-1}\right).
				\end{align}
The last product of the above equation can be approximated as	
\begin{align}\label{V(z)_1}
\prod_{\substack{N(\mathfrak{p})\geq z\\\mathfrak{p}\mid \mathcal{N}}}\left(1-\frac{1}{N(\mathfrak{p})-1}\right)=1+O\left(\frac{\log N(\mathcal{N})}{z}\right)
\end{align}
by applying the facts $1-x>e^{-2x}$ for $0<x<(\log 2)/2$ and $1-x<e^{-x}$ for all $x$.  
%Therefore, 
%\begin{align}\label{V(z)}
%			V(z)=\prod_{\substack{\mathfrak{p}\\2<N(\mathfrak{p})<z}}\left(1-\frac{1}{N(\mathfrak{p})-1}\right)\prod_{\substack{\mathfrak{p}\\N(\mathfrak{p})>2\\\mathfrak{p}\mid \mathcal{N}}}\left(\frac{N(\mathfrak{p})-1}{N(\mathfrak{p})-2}\right)\left(1+O\left(\frac{\log N(\mathcal{N})}{z}\right)\right).
%		\end{align}
The first product in \eqref{V(z) cal} reduces to
	\begin{align}\label{V(z)_2}
		\prod_{\substack{2<N(\mathfrak{p})<z}}\left(1-\frac{1}{N(\mathfrak{p})-1}\right)
		&=2\prod_{\substack{N(\mathfrak{p})<z}}\left(1-\frac{1}{N(\mathfrak{p})}\right)\prod_{\substack{2<N(\mathfrak{p})<z}}\left(1-\frac{1}{N(\mathfrak{p})-1}\right)\left(1-\frac{1}{N(\mathfrak{p})}\right)^{-1}\nonumber\\
		%&=2\prod_{\substack{\mathfrak{p}\\N(\mathfrak{p})<z}}\left(1-\frac{1}{N(\mathfrak{p})}\right)\prod_{\substack{\mathfrak{p}\\2<N(\mathfrak{p})<z}}\left(1-\frac{1}{(N(\mathfrak{p})-1)^2}\right)\\
		&=2\prod_{\substack{N(\mathfrak{p})<z}}\left(1-\frac{1}{N(\mathfrak{p})}\right)\prod_{\substack{N(\mathfrak{p})>2}}\left(1-\frac{1}{(N(\mathfrak{p})-1)^2}\right)\prod_{\substack{N(\mathfrak{p})\geq z}}\left(1+\frac{1}{N(\mathfrak{p})(N(\mathfrak{p})-2)}\right)\nonumber\\
		&=2\prod_{\substack{N(\mathfrak{p})<z}}\left(1-\frac{1}{N(\mathfrak{p})}\right)\prod_{\substack{N(\mathfrak{p})>2}}\left(1-\frac{1}{(N(\mathfrak{p})-1)^2}\right)\left(1+O\left(\frac{1}{z}\right)\right),
	\end{align}
where the last step follows due to the inequality $1+x<e^x<1+2x$ for $0<x<\log 2$. It follows from \eqref{Merten_2} that 
\begin{align}\label{V(z)_3}
\prod_{\substack{N(\mathfrak{p})<z}}\left(1-\frac{1}{N(\mathfrak{p})}\right)=\frac{4e^{-\gamma}}{\pi \log z}\left(1+O\left(\frac{1}{\log z}\right)\right)
\end{align}
Inserting \eqref{V(z)_3} into \eqref{V(z)_2}, we can combine  \eqref{V(z) cal}, \eqref{V(z)_1} and \eqref{V(z)_2} to conclude that
	\begin{align*}
		V(z)=\frac{1}{2\pi}e^{-\gamma}\mathfrak{S}_1(\mathcal{N})\frac{1}{\log z}\left(1+O\left(\frac{1}{\log z}\right)\right).
	\end{align*}
This completes the proof of the lemma.	
\end{proof}	
\section{Lower bound of $S(\mathcal{A}, \mathcal{P}, z)$}\label{Estimation of S(A, P, z)}  
This section is concerned in estimating the lower bound of $S(\mathcal{A}, \mathcal{P},z)$, which arrived in the right hand side of \eqref{lowerbound of r(N)1} and for that our goal is to apply Lemma \ref{Jurkat-Richert}. To that end, we first need the asymptotic formula of the cardinality of  $\mathcal{A}_\mathfrak{d}$. For $g(\mathfrak{d}) = \frac{1}{\phi(\mathfrak{d})}$, it follows from \eqref{A_d} that the main term of $|\mathcal{A}_\mathfrak{d}|$ is $\frac{|\mathcal{A}|}{\phi(\mathfrak{d})}$. The error term $r(\mathfrak{d})$ is provided in the next lemma.
\begin{lemma}\label{Lem:A_d error}
The error term of $|\mathcal{A}_\mathfrak{d}|$ satisfies
$$r(\mathfrak{d})=\pi(N(\mathcal{N}); d, \mathcal{N})-4\frac{\pi(N(\mathcal{N}))}{\phi(\mathfrak{d})}+O(\log N(\mathcal{N})),$$ 
\end{lemma}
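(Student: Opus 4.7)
The plan is to interpret $|\mathcal{A}_\mathfrak{d}|$ as a count of Gaussian primes lying in an arithmetic progression modulo $\mathfrak{d}$, compare this count directly with $\pi(N(\mathcal{N}); d, \mathcal{N})$, and then subtract the main term $|\mathcal{A}|/\phi(\mathfrak{d})$ using the closed form \eqref{A card} for the size of $\mathcal{A}$.

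First, since $g(\mathfrak{d}) = 1/\phi(\mathfrak{d})$ is independent of $n$, the defining identity \eqref{A_d} collapses to
\[
r(\mathfrak{d}) \;=\; |\mathcal{A}_\mathfrak{d}| \;-\; \frac{|\mathcal{A}|}{\phi(\mathfrak{d})}.
\]
For an element $n = \mathcal{N} - p \in \mathcal{A}$ with $p \in \mathcal{P}$, the condition $\mathfrak{d}\mid\mathfrak{n}$ is equivalent to $p \equiv \mathcal{N} \pmod{\mathfrak{d}}$. Hence $|\mathcal{A}_\mathfrak{d}|$ equals the number of Gaussian primes $p$ satisfying $N(p) < N(\mathcal{N})$, $p\nmid \mathcal{N}$, and $p\equiv \mathcal{N} \pmod{\mathfrak{d}}$.

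Second, I would compare this count with $\pi(N(\mathcal{N}); d, \mathcal{N})$. The two counts differ only through (i) Gaussian primes with $N(p) = N(\mathcal{N})$ in the progression, contributing $O(1)$ since any positive integer is the norm of at most two prime ideals of $\mathbb{Z}[i]$, and (ii) primes $p\mid \mathcal{N}$ satisfying the congruence, contributing at most $4\omega(\mathcal{N})$ since every prime ideal dividing $\mathcal{N}$ gives rise to exactly four associate Gaussian primes. Invoking the standard bound $\omega(\mathcal{N}) \ll \log N(\mathcal{N})$ yields
\[
|\mathcal{A}_\mathfrak{d}| \;=\; \pi(N(\mathcal{N}); d, \mathcal{N}) \;+\; O(\log N(\mathcal{N})).
\]

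Third, substituting \eqref{A card} in the formula for $r(\mathfrak{d})$ and combining with the previous display produces an extraneous term $4\omega(\mathcal{N})/\phi(\mathfrak{d})$ which is $O(\log N(\mathcal{N}))$, since $\phi(\mathfrak{d}) \geq 1$ for every nonunit ideal; this delivers the claimed formula. The only points that require genuine care are (a) tracking the factor of four when passing between Gaussian primes and prime ideals (the former is what populates $\mathcal{A}$ and $\mathcal{P}$, the latter is what is counted by $\pi$), and (b) bounding the coincidental overlap $\{p : p\mid \mathcal{N}\ \text{and}\ p\equiv \mathcal{N}\,(\mathrm{mod}\ \mathfrak{d})\}$ using $\omega(\mathcal{N}) \ll \log N(\mathcal{N})$. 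Neither of these presents a serious obstacle: this lemma is essentially bookkeeping, and the substantive work of the section will come later when $\sum |r(\mathfrak{d})|$ must be controlled via the Bombieri--Vinogradov analogue, Lemma \ref{B-V thm}.
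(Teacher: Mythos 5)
Your proposal is correct and takes essentially the same route as the paper: write $|\mathcal{A}_\mathfrak{d}|$ as a count of Gaussian primes in the residue class $\mathcal{N}\pmod{\mathfrak{d}}$, convert it to $\pi(N(\mathcal{N});d,\mathcal{N})$ up to an $O(\omega(\mathcal{N}))=O(\log N(\mathcal{N}))$ error coming from primes dividing $\mathcal{N}$ (plus the harmless boundary term you note), and then subtract $|\mathcal{A}|/\phi(\mathfrak{d})$ using \eqref{A card}, absorbing the $4\omega(\mathcal{N})/\phi(\mathfrak{d})$ piece into the $O(\log N(\mathcal{N}))$ term. Your bookkeeping about the factor of four and the bound $\phi(\mathfrak{d})\geq 1$ matches the intended reading of the paper's definitions.
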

\begin{proof}
We can estimate $|\mathcal{A}_\mathfrak{d}|$ as 
	\begin{align}\label{Formula_A_q}
		|\mathcal{A}_\mathfrak{d}|=\sum_{\substack{n\in\mathcal{A}\\\mathfrak{d}|n}}1 =\sum_{\substack{\mathcal{N}-p\in\mathcal{A}\\ \mathcal{N}-p\equiv 0(\bmod d)}}1 =\sum_{\substack{p\in\mathcal{P}\\N(p)< N(\mathcal{N})\\ p\equiv \mathcal{N}(\bmod d)}}1 &=\sum_{\substack{N(p)< N(\mathcal{N})\\ p\equiv\mathcal{N}(\bmod d)}}1 +O(\omega(\mathcal{N} ))\nonumber\\
&=\pi(N(\mathcal{N}); d, \mathcal{N})+O(\log N(\mathcal{N})),
	\end{align}
	where in the last step we use the bound $\omega(\mathcal{N})\ll \log N(\mathcal{N}).$
Thus, applying \eqref{A card}, it follows from \eqref{A_d} that the error term can be approximated as
	\begin{align*}\label{r(d)}
		r(\mathfrak{d})
		%&=|\mathcal{A}_\mathfrak{d}|-\sum_{n\in\mathcal{A}}g(\mathfrak{d})\nonumber\\
		&=|\mathcal{A}_\mathfrak{d}|-\frac{|\mathcal{A}|}{\phi(\mathfrak{d})}\nonumber\\
		&=\pi(N(\mathcal{N}); d, \mathcal{N})+O(\log N(\mathcal{N}))-4\frac{\pi(N(\mathcal{N}))-\omega(\mathcal{N})}{\phi(\mathfrak{d})}\nonumber\\
		&=\pi(N(\mathcal{N}); d,\mathcal{N})-4\frac{\pi(N(\mathcal{N}))}{\phi(\mathfrak{d})}+O(\log N(\mathcal{N})).
	\end{align*}
\end{proof} 
In order to apply Lemma~\ref{Jurkat-Richert}, we bound the error term of $S(\mathcal{A}, \mathcal{P}, z)$ 
%by an application of Lemma~\ref{B-V thm} with $A=3$ and we fix our $D$ to be $\displaystyle\frac{N(\mathcal{N})^{1/2}}{(\log N(\mathcal{N}))^{B(3)+1}}$
in the following lemma. 
\begin{lemma}\label{Bound error}
Let $D=\displaystyle\frac{N(\mathcal{N})^{1/2}}{(\log N(\mathcal{N}))^{B_3+1}}$ for some  some $B_3>0$. Then we have the bound
$$\sum\limits_{\substack{\mathfrak{d}|P(z)\\N(\mathfrak{d})<DQ}}|r(\mathfrak{d})| \ll \frac{N(\mathcal{N})}{(\log N(\mathcal{N}))^3}.$$
\end{lemma}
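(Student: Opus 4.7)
The plan is to start from the decomposition of $r(\mathfrak{d})$ provided by Lemma~\ref{Lem:A_d error} and apply the triangle inequality to split the target sum into three pieces:
\begin{align*}
\sum_{\substack{\mathfrak{d}\mid P(z)\\ N(\mathfrak{d})<DQ}}|r(\mathfrak{d})|
\;\leq\;\sum_{\substack{\mathfrak{d}\mid P(z)\\ 4<N(\mathfrak{d})<DQ}}\left|\pi(N(\mathcal{N});d,\mathcal{N})-\frac{4\pi(N(\mathcal{N}))}{\phi(\mathfrak{d})}\right|
\;+\;\Sigma_{\mathrm{small}}\;+\;\Sigma_{\mathrm{err}},
\end{align*}
where $\Sigma_{\mathrm{small}}$ collects the finitely many terms with $N(\mathfrak{d})\le 4$ and $\Sigma_{\mathrm{err}}$ collects the $O(\log N(\mathcal{N}))$ remainders from Lemma~\ref{Lem:A_d error} summed over all $\mathfrak{d}$ with $N(\mathfrak{d})<DQ$.

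The main piece is handled by the Bombieri--Vinogradov type estimate for $\mathbb{Z}[i]$ (Lemma~\ref{B-V thm}). Since $Q$ depends only on $\epsilon$, the bound \eqref{Bound on Q} gives $DQ< N(\mathcal{N})^{1/2}/(\log N(\mathcal{N}))^{B_3}$ for $N(\mathcal{N})$ large. Applying Lemma~\ref{B-V thm} with $A=3$ and choosing $B_3=B_A$ (the constant furnished by that lemma for $A=3$), the main sum is $\ll N(\mathcal{N})/(\log N(\mathcal{N}))^3$, which is the target bound. The sum $\Sigma_{\mathrm{small}}$ has $O(1)$ terms, each bounded trivially by $\pi(N(\mathcal{N}))+O(\log N(\mathcal{N}))\ll N(\mathcal{N})/\log N(\mathcal{N})$, so this contribution is also acceptable (in fact, one can include these finitely many ideals in the main sum or just note directly that $r(\mathfrak{d})=O(N(\mathcal{N})/\log N(\mathcal{N}))$ uniformly).

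For the remainder sum $\Sigma_{\mathrm{err}}$, I would use Weber's formula \eqref{Weber} to count ideals of norm at most $DQ$, giving
\begin{align*}
\Sigma_{\mathrm{err}}\;\ll\; DQ\cdot\log N(\mathcal{N})\;\ll\;\frac{N(\mathcal{N})^{1/2}\log N(\mathcal{N})}{(\log N(\mathcal{N}))^{B_3}},
\end{align*}
which is dominated by $N(\mathcal{N})/(\log N(\mathcal{N}))^3$ for any $B_3\geq 1$, so in particular the choice of $B_3$ coming from Bombieri--Vinogradov suffices simultaneously for this piece.

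The only real obstacle is bookkeeping the choice of $B_3$: one must fix $B_3$ to be the constant $B_A$ provided by Lemma~\ref{B-V thm} for $A=3$, and then verify that this same $B_3$ appears in the definition of $D$ in the statement. Aside from this matching of constants, the proof is a direct application of the Gaussian analogue of Bombieri--Vinogradov, with the trivial tails controlled by Weber's lattice point count.
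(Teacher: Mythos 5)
Your decomposition matches the paper's, and your handling of the main range via Bombieri--Vinogradov and of the $O(\log N(\mathcal{N}))$ remainders via Weber's count are both correct. However, the treatment of $\Sigma_{\mathrm{small}}$ (the ideals $\mathfrak{d}\mid P(z)$ with $N(\mathfrak{d})\le 4$) has a genuine gap. You bound each such term trivially by $\pi(N(\mathcal{N}))+O(\log N(\mathcal{N}))\ll N(\mathcal{N})/\log N(\mathcal{N})$ and declare this acceptable, but it is not: the target bound is $N(\mathcal{N})/(\log N(\mathcal{N}))^3$, which is smaller by a factor of $(\log N(\mathcal{N}))^2$. Even a single term of size $N(\mathcal{N})/\log N(\mathcal{N})$ destroys the estimate. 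Your parenthetical alternatives do not rescue this: ``include them in the main sum'' is unavailable since Lemma~\ref{B-V thm} is stated only for $N(d)>4$, and ``note that $r(\mathfrak{d})=O(N(\mathcal{N})/\log N(\mathcal{N}))$ uniformly'' just restates the same inadequate bound.

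The fix, which the paper carries out, is to show that $\delta(N(\mathcal{N});d,\mathcal{N})$ \emph{vanishes identically} on this range, so $\Sigma_{\mathrm{small}}$ contributes only the $O(\log N(\mathcal{N}))$ errors already absorbed into $\Sigma_{\mathrm{err}}$. Concretely, the only divisor of $P(z)$ with norm at most $4$ is the unit ideal: no $\mathfrak{d}\mid P(z)$ can have norm $2$ (such $d=\pm1\pm i$, but $(1+i)\notin\mathcal{P}$ since $\mathcal{N}$ is even, so $(1+i)\nmid P(z)$); none has norm $3$ (since $3$ is not a sum of two integer squares); and none has norm $4$ (such $d=\pm2,\pm 2i$ is divisible by $(1+i)$, again excluded). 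For the unit ideal $d$, $\pi(N(\mathcal{N});d,\mathcal{N})=4\pi(N(\mathcal{N}))$ and $\phi(\mathfrak{d})=1$, so $\delta=0$ exactly. Without this observation, the proposed argument does not close.
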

\begin{proof}
It follows from \eqref{Bound on Q} that for sufficiently large $N(\mathcal{N})$,
\begin{equation}\label{Bound DQ}
DQ<\frac{N(\mathcal{N})^{1/2}}{(\log N(\mathcal{N}))^{B_3}}.
\end{equation}
Letting, 
\begin{align}\label{Delta}
\delta(N(\mathcal{N});d,\mathcal{N}):= \pi(N(\mathcal{N}); d,\mathcal{N})-4\frac{\pi(N(\mathcal{N}))}{\phi(\mathfrak{d})},
\end{align}
we apply Lemma \ref{B-V thm} with $A=3$ to obtain 

\begin{equation}\label{delta sum}
\sum_{\substack{\mathfrak{d}|P(z)\\4<N(d)<DQ}}|\delta(N(\mathcal{N});d,\mathcal{N})|
\ll \frac{N(\mathcal{N})}{(\log N(\mathcal{N}))^3}.
\end{equation}
Next, we observe the values of $\delta(N(\mathcal{N});d,\mathcal{N})$ for $1\leq N(d)\leq 4$ with $\mathfrak{d}|P(z)$. But for any $d\in\Z[i]$ with $\mathfrak{d}|P(z)$, the norm can not take the values $2, 3$ and $4$ since more explicitly, for  $N(d)=2$, we have $ d=\pm 1\pm i$, which is impossible as $\pm 1\pm i\notin\mathcal{P}$ implies the ideal $(\pm 1\pm i)\nmid P(z)$, for the case $N(d)=3$, writing $d=a_1+ib_1$, we have $a_1^2+b_1^2=3$, which is  again not possible as $a_1,b_1\in\mathbb{Z}$ and finally, for the case $N(d)=4$ we have $d=\pm2, \pm 2i$ which implies $(\pm 1\pm i)\mid \mathfrak{d}$, which is not possible as the ideals $(\pm 1\pm i)\nmid P(z)$. Therefore, the only possible case is $N(d)=1$, that is, $d$ is a unit in $\mathbb{Z}[i]$. But for any unit $u$ in $\Z[i]$, we have
	\begin{align*}
		\delta(N(\mathcal{N});u,\mathcal{N})&=\pi(N(\mathcal{N}); u, \mathcal{N})-4\frac{\pi(N(\mathcal{N}))}{\phi(u)}\\
		&=4\pi(N(\mathcal{N}))-4\pi(N(\mathcal{N}))\\
		&=0.
	\end{align*}
Therefore the sum \eqref{delta sum} for $1\leq N(d)\leq 4$ reduces to
	\begin{align}\label{deltaless4}
		&\sum_{\substack{\mathfrak{d}|P(z)\\N(d)\leq4}}|\delta(N(\mathcal{N});d,\mathcal{N})|=0.
	\end{align}
Finally, it follows from Lemma \ref{Lem:A_d error} that
	\begin{align*}
		\sum_{\substack{\mathfrak{d}|P(z)\\N(\mathfrak{d})<DQ}}|r(\mathfrak{d})|&=\sum_{\substack{\mathfrak{d}|P(z)\\N(\mathfrak{d})<DQ}}\delta(N(\mathcal{N});d,\mathcal{N})+
		O\left(\sum_{\substack{\mathfrak{d}|P(z)\\N(\mathfrak{d})<DQ}}\log N(\mathcal{N})\right)\nonumber\\
		&\ll \frac{N(\mathcal{N})}{(\log N(\mathcal{N}))^3}+\log N(\mathcal{N})\frac{N(\mathcal{N})^{1/2}}{(\log N(\mathcal{N}))^{B_3}}\nonumber\\
		&\ll \frac{N(\mathcal{N})}{(\log N(\mathcal{N}))^3},
	\end{align*}
where in the penultimate step we have used the bound \eqref{delta sum} and \eqref{deltaless4} in the first term and \eqref{Bound DQ} in the second term. This completes the proof of the lemma.
\end{proof}
In the following lemma we obtain the lower bound of $S(\mathcal{A}, \mathcal{P}, z)$.
\begin{lemma}\label{1st sum}
For $z=N(\mathcal{N})^{1/8}$, we have
\begin{align*}
S(\mathcal{A}, \mathcal{P}, z)>\frac{4V(z) N(\mathcal{N})}{\log N(\mathcal{N})}\left(\frac{e^{\gamma}}{2}\log 3+O(\epsilon)\right).
\end{align*}
\end{lemma}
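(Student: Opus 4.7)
The plan is to invoke the lower bound \eqref{Lower bound of JR thm} of the Jurkat--Richert type result from Lemma \ref{Jurkat-Richert} with the multiplicative function $g(\mathfrak{d})=1/\phi(\mathfrak{d})$ and the finite set $\mathcal{Q}$ already fixed in Section \ref{Linear Sieve in Z[i]}. As the level of distribution I would take
$$D = \frac{N(\mathcal{N})^{1/2}}{(\log N(\mathcal{N}))^{B_3+1}}$$
for some constant $B_3 > 0$, so that Lemma \ref{Bound error} directly controls the error sum by $N(\mathcal{N})/(\log N(\mathcal{N}))^3$. Since $z = N(\mathcal{N})^{1/8}$, one has $D \geq z^2$ for sufficiently large $N(\mathcal{N})$, so the hypothesis needed for \eqref{Lower bound of JR thm} is in force.

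Next I would compute the sieve parameter
$$s = \frac{\log D}{\log z} = 4 - \frac{8(B_3+1)\log\log N(\mathcal{N})}{\log N(\mathcal{N})},$$
which approaches $4$ from below as $N(\mathcal{N}) \to \infty$. Using the explicit formula $sf(s) = 2e^\gamma \log(s-1)$ valid on $2 \leq s \leq 4$ from \eqref{F(s) and f(s)}, I obtain $f(4) = \tfrac{e^\gamma}{2}\log 3$, and by continuity of $f$ at $s=4$ together with the boundedness of $e^{14-s}$ near $s=4$, the sieve factor satisfies $f(s) - \epsilon e^{14-s} = \tfrac{e^\gamma}{2}\log 3 + O(\epsilon)$ once $N(\mathcal{N})$ is large relative to $\epsilon$.

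For the main term, Lemma \ref{PIT} combined with the bound $\omega(\mathcal{N}) \ll \log N(\mathcal{N})$ and \eqref{A card} yields
$$|\mathcal{A}| = \frac{4\, N(\mathcal{N})}{\log N(\mathcal{N})}\left(1 + O\!\left(\frac{1}{\log N(\mathcal{N})}\right)\right),$$
while Lemma \ref{lemma_for_V(z)} gives $V(z) \asymp 1/\log N(\mathcal{N})$. Hence $V(z)|\mathcal{A}| \asymp N(\mathcal{N})/(\log N(\mathcal{N}))^2$, and the remainder sum $\sum |r(\mathfrak{d})| \ll N(\mathcal{N})/(\log N(\mathcal{N}))^3$ from Lemma \ref{Bound error} is smaller than $V(z)|\mathcal{A}|$ by a factor of $\log N(\mathcal{N})$, so it is absorbed into the $O(\epsilon)$ term in the bracket. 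Substituting everything into \eqref{Lower bound of JR thm} delivers
$$S(\mathcal{A},\mathcal{P},z) > \frac{4 V(z) N(\mathcal{N})}{\log N(\mathcal{N})}\left(\frac{e^\gamma}{2}\log 3 + O(\epsilon)\right).$$

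The only delicate point is ensuring that the choice of $D$ pushes $s$ sufficiently close to $4$ (while still being in the range $2 \leq s \leq 4$ where $f$ is given by the clean formula $sf(s) = 2e^\gamma\log(s-1)$), so that the constant $\tfrac{e^\gamma}{2}\log 3$ is extracted with an error absorbable into $O(\epsilon)$; everything else is a routine assembly of the bounds already proved in the preceding sections.
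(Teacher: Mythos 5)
Your proposal is correct and follows essentially the same route as the paper: the same choice of level $D = N(\mathcal{N})^{1/2}/(\log N(\mathcal{N}))^{B_3+1}$, the same computation placing $s$ just below $4$, the same evaluation $f(s) = \tfrac{e^\gamma}{2}\log 3 + O(\epsilon)$ via the formula $sf(s) = 2e^\gamma\log(s-1)$, the same asymptotic for $|\mathcal{A}|$ from \eqref{A card} and Lemma \ref{PIT}, and the same absorption of the remainder from Lemma \ref{Bound error} using $V(z) \asymp 1/\log N(\mathcal{N})$. The only cosmetic differences are that you explicitly flag the $D \geq z^2$ hypothesis and the $\epsilon e^{14-s}$ term, both of which the paper handles implicitly.
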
 
\begin{proof}
For $D=\displaystyle\frac{N(\mathcal{N})^{1/2}}{(\log N(\mathcal{N}))^{B_3+1}}$ and $z=N(\mathcal{N})^{1/8}$, we have
	\begin{align*}
		s=\frac{\log D}{\log z}
		=8\frac{\log\left(\frac{N(\mathcal{N})^{1/2}}{(\log N(\mathcal{N}))^{B_3+1}}\right)}{\log N(\mathcal{N})}
		=4-\frac{8(B_3+1)\log\log N(\mathcal{N})}{\log N(\mathcal{N})}.
	\end{align*}
Thus, for sufficiently large $N(\mathcal{N})$, we have $s\in[3,4]$. Hence the condition \eqref{F(s) and f(s)} of $f(s)$ readily yields
	\begin{align*}
		f(s)=\frac{2 e^\gamma \log(s-1)}{s}
		=\frac{e^\gamma}{2}\log 3+O\left(\frac{\log\log N(\mathcal{N})}{\log N(\mathcal{N})}\right)
		=\frac{e^\gamma}{2}\log 3+O(\epsilon).
	\end{align*}
Finally, we invoke the above estimate of $f(s)$, cardinality of $\mathcal{A}$ from \eqref{A card} with Lemma~\ref{PIT} and the bound of the error term from Lemma \ref{Bound error} together into \eqref{Lower bound of JR thm}, to obtain the lower bound of $S(\mathcal{A},\mathcal{P},z)$ as
\begin{align*}
S(\mathcal{A},\mathcal{P},z)&>\left(\frac{e^\gamma}{2}\log 3+O(\epsilon)\right)V(z)\frac{4 N(\mathcal{N})}{\log N(\mathcal{N})}\left(1+O\left(\frac{1}{\log N(\mathcal{N})}\right)\right)+O\left(\frac{N(\mathcal{N})}{(\log N(\mathcal{N}))^3}\right)\\
&=\frac{4V(z) N(\mathcal{N})}{\log N(\mathcal{N})}\left[\frac{e^\gamma}{2}\log 3+O(\epsilon)+O\left(\frac{1}{V(z)(\log N(\mathcal{N}))^2}\right)\right]\\
&=\frac{4V(z) N(\mathcal{N})}{\log N(\mathcal{N})}\left(\frac{e^\gamma}{2}\log 3+O(\epsilon)\right),
\end{align*}
where in the final step we can ignore the last term using the estimate of $V(z)$ from Lemma \ref{lemma_for_V(z)}.  
\end{proof}

\section{Upper bound for the average sum of $S(\mathcal{A}_\mathfrak{q}, \mathcal{P}, z)$ }\label{Estimation of S(A_q, P, z)}
In this section, our main concern is to find an upper bound of $\displaystyle\sum_{z\leq  N(\mf{q})<y}S(\mathcal{A}_\mathfrak{q}, \mathcal{P}, z)$, which has arrived in the right hand side of \eqref{lowerbound of r(N)1}. For $\mathcal{A}_\mathfrak{q}=\{n\in\mathcal{A}:\mathfrak{q}\mid \mathfrak{n}\}$ as appeared in Lemma \ref{T_1 sum}, let $r_\mathfrak{q}(\mathfrak{d})$ denotes the error term of $|(\mathcal{A}_ \mathfrak{q})_\mathfrak{d}|$. We fix $D=\displaystyle\frac{N(\mathcal{N})^{1/2}}{(\log N(\mathcal{N}))^{B_4+1}}$ for some $B_4>0$, $y= N(\mathcal{N})^{\frac{1}{3}}$ and $z= N(\mathcal{N})^{\frac{1}{8}}$. Then for $N(\mf{q})<y$, we have $D_\mf{q}=\frac{D}{N(\mf{q})} \geq \frac{D}{y} \geq z$. Thus, with the above choice of $D_\mf{q}$ along with $N(\mf{q})<y$, it follows from \eqref{Upper bound of JR thm} of Lemma \ref{Jurkat-Richert}  that we can bound $S(\mathcal{A}_\mathfrak{q}, \mathcal{P}, z)$ as
\begin{align*}
	S(\mathcal{A}_\mathfrak{q}, \mathcal{P}, z)<(F(s_\mf{q})+\epsilon e^{14-s_\mf{q}})V(z)|\mathcal{A}_q|+\sum_{\substack{\mf{d}|P(z)\\N(\mf{d})<D_\mf{q}Q}}|r_{\mf{q}}(\mf{d})|,
\end{align*}
where $s_q=\frac{D_\mf{q}}{\log z}$. The definition of $\mathcal{A}_\mathfrak{q}$, defined in Lemma \ref{T_1 sum}, yields that we may assume $N(q,\mathcal{N})=1$, since for $\mathcal{N}-p \in \mathcal{A}_\mathfrak{q}$, we have the prime $q$ divides $\mathcal{N}-p$, but $q\mid \mathcal{N}$ implies $q\mid p$, which is not possible as $p\nmid \mathcal{N}$. Therefore, summing over $z\leq N(\mf{q})<y$ on the both side of the above equation, $\displaystyle\sum_{z\leq  N(\mf{q})<y}S(\mathcal{A}_\mathfrak{q}, \mathcal{P}, z)$ can be bounded as
\begin{align}\label{JR on S_2}
\sum_{z\leq  N(\mf{q})<y}S(\mathcal{A}_\mathfrak{q}, \mathcal{P}, z) <V(z)\sum_{\substack{z\leq N(\mf{q})<y\\N(q,\mathcal{N})=1}}(F(s_\mf{q})+\epsilon e^{14})|\mathcal{A}_q|+\sum_{\substack{z\leq N(\mf{q})<y\\N(q, \mathcal{N})=1}}\sum_{\substack{\mf{d}|P(z)\\N(\mf{d})<D_\mf{q}Q}}|r_{\mf{q}}(\mf{d})|.
\end{align}  
We next bound the error term and estimate the main term of the above equation. In order to bound the error term, we need the following bound of the partial sum of $\frac{1}{\phi(\mf{n})}$, where $\mf{n}$ is an ideal in $\Z[i]$.
\begin{lemma}\label{Lem:reciprocal_sum_phi}
For any $x>0$, we have 
    \begin{align}\label{reciprocal_sum_phi}
        \sum_{N(\mf{n})<x}\frac{1}{\phi(\mf{n})}\ll \log x.
    \end{align}
\end{lemma}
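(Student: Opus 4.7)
The plan is to mimic the classical proof that $\sum_{n\leq x} 1/\phi(n) \ll \log x$, replacing rational integers by ideals of $\Z[i]$ and using Weber's count \eqref{Weber} in place of the trivial count of integers up to $x$.

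First, I would establish the multiplicative identity
\[
\frac{1}{\phi(\mf{n})} \;=\; \frac{1}{N(\mf{n})}\sum_{\mf{d}\mid \mf{n}}\frac{\mu^2(\mf{d})}{\phi(\mf{d})},
\]
valid for every non-zero ideal $\mf{n}$. This follows directly from $\phi(\mf{n}) = N(\mf{n})\prod_{\mf{p}\mid\mf{n}}(1-1/N(\mf{p}))$ by expanding
\[
\prod_{\mf{p}\mid\mf{n}}\!\left(1-\frac{1}{N(\mf{p})}\right)^{-1} = \prod_{\mf{p}\mid\mf{n}}\!\left(1+\frac{1}{N(\mf{p})-1}\right) = \sum_{\mf{d}\mid\mf{n}}\frac{\mu^2(\mf{d})}{\phi(\mf{d})},
\]
using that $\phi(\mf{d}) = \prod_{\mf{p}\mid\mf{d}}(N(\mf{p})-1)$ for square-free $\mf{d}$.

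Next, I would substitute this identity into the sum to be bounded, and then swap the order of summation. Writing $\mf{n}=\mf{d}\mf{m}$ with $\mf{d}\mid\mf{n}$, one arrives at
\[
\sum_{N(\mf{n})<x}\frac{1}{\phi(\mf{n})} \;=\; \sum_{N(\mf{d})<x}\frac{\mu^2(\mf{d})}{N(\mf{d})\phi(\mf{d})}\sum_{N(\mf{m})<x/N(\mf{d})}\frac{1}{N(\mf{m})}.
\]
For the inner sum, Weber's formula \eqref{Weber} together with partial summation (applied to the ideal-counting function, recalling that each non-zero ideal of $\Z[i]$ corresponds to four associates) gives $\sum_{N(\mf{m})<y} 1/N(\mf{m}) \ll \log y$ for $y\geq 2$, and the sum is $O(1)$ for smaller $y$. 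Using $\log(x/N(\mf{d})) \leq \log x$, the whole expression is at most
\[
\ll \log x \cdot \sum_{\mf{d}}\frac{\mu^2(\mf{d})}{N(\mf{d})\phi(\mf{d})}.
\]

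Finally, to complete the argument one checks that the series on the right is finite. Writing it as an Euler product,
\[
\sum_{\mf{d}}\frac{\mu^2(\mf{d})}{N(\mf{d})\phi(\mf{d})} \;=\; \prod_{\mf{p}}\!\left(1+\frac{1}{N(\mf{p})(N(\mf{p})-1)}\right),
\]
convergence follows from $1/(N(\mf{p})(N(\mf{p})-1)) = O(1/N(\mf{p})^2)$ and the convergence of $\sum_{\mf{p}} 1/N(\mf{p})^2$, which is a straightforward consequence of the prime ideal theorem (Lemma \ref{PIT}). There is no real obstacle here; the only points requiring mild care are the factor-of-$4$ in passing from element counts to ideal counts in Weber's estimate, and handling the small-$y$ range of the inner sum, both of which are absorbed into the implied constant.
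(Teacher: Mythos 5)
Your proof is correct and matches the paper's in both structure and substance: expand $1/\phi(\mf{n})$ via a divisor sum, swap the order of summation, bound the inner ideal sum by $\log x$ using Weber's estimate, and note that the outer sum is a convergent Euler product. The only cosmetic difference is that you use the square-free expansion $\frac{1}{\phi(\mf{n})}=\frac{1}{N(\mf{n})}\sum_{\mf{d}\mid\mf{n}}\frac{\mu^2(\mf{d})}{\phi(\mf{d})}$ whereas the paper writes out the geometric series to get $\frac{1}{N(\mf{n})}\sum_{\mf{d}^*\mid\mf{n}}\frac{1}{N(\mf{d})}$; both give the same convergent product $\prod_{\mf{p}}\bigl(1+\frac{1}{N(\mf{p})(N(\mf{p})-1)}\bigr)$.
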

\begin{proof}
The definition \eqref{phin} of $\phi(\mf{n})$ yields
\begin{align*}
    \frac{1}{\phi(\mf{n})}=\frac{1}{N(\mf{n})}\prod_{\mf{p}\mid \mf{n}}\left(1-\frac{1}{N(\mf{p})}\right)^{-1}
    =\frac{1}{N(\mf{n})}\sum_{\substack{\mf{d}\subset \mathbb{Z}[i]\\\mf{d}^*\mid\mf{n}}}\frac{1}{N(\mf{d})},
\end{align*}
where $\mf{d}^*$ denotes the square-free part of $\mf{d}$. Thus the following partial sum can be written as 
\begin{align*}
    \sum_{N(\mf{n})<x}\frac{1}{\phi(\mf{n})}&=\sum_{N(\mf{n})<x}\frac{1}{N(\mf{n})}\sum_{\substack{\mf{d}\subset \mathbb{Z}[i]\\\mf{d}^*\mid\mf{n}}}\frac{1}{N(\mf{d})}\\
    &=\sum_{\substack{\mf{d}\subset \mathbb{Z}[i]}}\frac{1}{N(\mf{d})}\sum_{\substack{N(\mf{n})<x\\\mf{d}^*\mid\mf{n}}}\frac{1}{N(\mf{n})}\\
    &\leq\sum_{\substack{\mf{d}\subset \mathbb{Z}[i]}}\frac{1}{N(\mf{d}\mf{d}^*)}\sum_{\substack{N(\mf{m})<x/N(\mf{d}^*)}}\frac{1}{N(\mf{m})},
\end{align*}
Here the first sum is finite since for a prime ideal $\mf{p}\mid\mf{d}\mf{d}^*$, we have $\mf{p}^2\mid \mf{d}\mf{d}^*$ and the second sum is bounded by $\log x$. %{which follows directly from \cite[Theorem 3.2 (a)]{Apostol}}. 
Thus we can conclude
\begin{align*}
\sum_{N(\mf{n})<x}\frac{1}{\phi(\mf{n})} \ll \log x.
\end{align*}
This completes the proof of the lemma.
\end{proof}
The next lemma bounds the error term of $\displaystyle\sum_{z\leq  N(\mf{q})<y}S(\mathcal{A}_\mathfrak{q}, \mathcal{P}, z)$ given in \eqref{JR on S_2}.
\begin{lemma}\label{Error_S_2} 
For $z>4$ and $y<DQ$, we have
\begin{align*}
\sum_{\substack{z\leq N(\mf{q})<y\\N(q, \mathcal{N})=1}}\sum_{\substack{\mf{d}|P(z)\\N(\mf{d})<D_\mf{q}Q}}|r_{\mf{q}}(\mf{d})|\ll \frac{N(\mathcal{N})}{(\log N(\mathcal{N}))^3}.
\end{align*}
\end{lemma}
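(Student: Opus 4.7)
The plan is to mimic the derivation of the single-variable error term $r(\mathfrak{d})$ in Lemma \ref{Lem:A_d error}, now applied to the already-sifted set $\mathcal{A}_\mathfrak{q}$, and then split the resulting triple sum so that Lemma \ref{B-V thm} (Bombieri--Vinogradov) and Lemma \ref{Lem:reciprocal_sum_phi} can handle each piece.

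First I would compute $|(\mathcal{A}_\mathfrak{q})_\mathfrak{d}|$ and $|\mathcal{A}_\mathfrak{q}|$ separately. Since $\mathfrak{q}$ is prime with $N(\mathfrak{q})\geq z$ while every prime divisor of $\mathfrak{d}\mid P(z)$ has norm $<z$, we have $(\mathfrak{q},\mathfrak{d})=1$, so $(\mathcal{A}_\mathfrak{q})_\mathfrak{d}=\{n\in\mathcal{A}:\mathfrak{q}\mathfrak{d}\mid\mathfrak{n}\}$. Repeating the computation leading to \eqref{Formula_A_q} gives
\begin{align*}
|(\mathcal{A}_\mathfrak{q})_\mathfrak{d}| = \pi(N(\mathcal{N});qd,\mathcal{N})+O(\log N(\mathcal{N})),\qquad |\mathcal{A}_\mathfrak{q}| = \pi(N(\mathcal{N});q,\mathcal{N})+O(\log N(\mathcal{N})).
\end{align*}
Since the sieve approximant is $|\mathcal{A}_\mathfrak{q}|/\phi(\mathfrak{d})$, subtracting, then adding and subtracting $4\pi(N(\mathcal{N}))/\phi(\mathfrak{q}\mathfrak{d})$ to invoke the definition \eqref{Delta} of $\delta$, yields
\begin{align*}
r_\mathfrak{q}(\mathfrak{d}) = \delta(N(\mathcal{N});qd,\mathcal{N}) - \frac{1}{\phi(\mathfrak{d})}\,\delta(N(\mathcal{N});q,\mathcal{N}) + O(\log N(\mathcal{N})).
\end{align*}

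Next I would apply the triangle inequality and bound the three resulting pieces $S_1,S_2,S_3$ separately. For
\begin{align*}
S_1:=\sum_{z\leq N(\mathfrak{q})<y}\sum_{\substack{\mathfrak{d}\mid P(z)\\ N(\mathfrak{d})<D_\mathfrak{q}Q}}|\delta(N(\mathcal{N});qd,\mathcal{N})|,
\end{align*}
the crucial point is that the map $(\mathfrak{q},\mathfrak{d})\mapsto \mathfrak{m}:=\mathfrak{q}\mathfrak{d}$ is injective on the range of summation, because $\mathfrak{d}$ is squarefree with all prime divisors of norm $<z$, so $\mathfrak{q}$ is recoverable from $\mathfrak{m}$ as the unique prime divisor of $\mathfrak{m}$ of norm $\geq z$. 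Since $N(\mathfrak{q})N(\mathfrak{d})<N(\mathfrak{q})D_\mathfrak{q}Q=DQ$, and an analogue of \eqref{Bound DQ} with $B_4$ in place of $B_3$ puts this below the Bombieri--Vinogradov range, Lemma \ref{B-V thm} with $A=3$ gives $S_1\ll N(\mathcal{N})/(\log N(\mathcal{N}))^3$ provided $B_4$ is taken at least as large as the constant supplied by that lemma. For
\begin{align*}
S_2:=\sum_{z\leq N(\mathfrak{q})<y}|\delta(N(\mathcal{N});q,\mathcal{N})|\sum_{\substack{\mathfrak{d}\mid P(z)\\ N(\mathfrak{d})<D_\mathfrak{q}Q}}\frac{1}{\phi(\mathfrak{d})},
\end{align*}
Lemma \ref{Lem:reciprocal_sum_phi} controls the inner sum by $O(\log N(\mathcal{N}))$, while Lemma \ref{B-V thm} with $A=4$ controls the outer sum over primes $\mathfrak{q}$ by $O(N(\mathcal{N})/(\log N(\mathcal{N}))^4)$, giving $S_2\ll N(\mathcal{N})/(\log N(\mathcal{N}))^3$. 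Finally, the $O(\log N(\mathcal{N}))$ remainder piece $S_3$ is handled by a crude count: the injection bounds the number of admissible pairs $(\mathfrak{q},\mathfrak{d})$ by $\#\{\mathfrak{m}:N(\mathfrak{m})<DQ\}\ll DQ$ via \eqref{Weber}, so $S_3\ll \log N(\mathcal{N})\cdot N(\mathcal{N})^{1/2}/(\log N(\mathcal{N}))^{B_4}$, which is absorbed into the target bound by enlarging $B_4$.

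The main subtlety is verifying the injection and the uniform bound $N(\mathfrak{q}\mathfrak{d})<DQ$ across the whole range of $\mathfrak{q}$; this is exactly what enables the collapse of the double sum in $S_1$ to a single Bombieri--Vinogradov sum over $\mathfrak{m}$. Once this is in place, the remainder of the argument is bookkeeping, with the constant $B_4$ chosen at the end large enough to accommodate the three Bombieri--Vinogradov applications at levels $A=3$ and $A=4$ simultaneously.
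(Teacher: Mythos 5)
Your proof is correct and follows essentially the same route as the paper: both start from the telescoping identity $r_\mathfrak{q}(\mathfrak{d})=r(\mathfrak{q}\mathfrak{d})-r(\mathfrak{q})/\phi(\mathfrak{d})$ (you phrase it in terms of $\delta$ plus an $O(\log N(\mathcal{N}))$ remainder via Lemma~\ref{Lem:A_d error}, which is an equivalent reformulation), both use the coprimality of $\mathfrak{q}$ and $\mathfrak{d}\mid P(z)$ to make the map $(\mathfrak{q},\mathfrak{d})\mapsto\mathfrak{q}\mathfrak{d}$ injective with $N(\mathfrak{q}\mathfrak{d})<DQ$, and both combine Lemma~\ref{B-V thm} with Lemma~\ref{Lem:reciprocal_sum_phi}. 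The only organizational difference is that the paper absorbs the first term into the second by noting that the injection image and the range $z\leq N(\mathfrak{q})<y$ are both contained in $4<N(\mathfrak{d})<DQ$, giving a single bound $\ll \log N(\mathcal{N})\sum_{4<N(\mathfrak{d})<DQ}|r(\mathfrak{d})|$ handled with $A=4$, whereas you keep the three pieces $S_1,S_2,S_3$ separate and apply Bombieri--Vinogradov at levels $A=3$ and $A=4$ respectively; this is a matter of bookkeeping and yields the identical final bound.
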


\begin{proof}	
Let $\mathfrak{d}|P(z)$ and $z\leq N(\mathfrak{q})<y$. Then  we can write the error term of $|(\mathcal{A}_ \mathfrak{q})_\mathfrak{d}|$ as
\begin{align*}
r_\mathfrak{q}(\mathfrak{d})=|(\mathcal{A}_ \mathfrak{q})_\mathfrak{d}|-\sum_{n\in\mathcal{A}_\mathfrak{q}}g(\mathfrak{d})
=|\mathcal{A}_{\mathfrak{q}\mathfrak{d}}|-\frac{|\mathcal{A}_q|}{\phi(\mathfrak{d})}
=|\mathcal{A}_{\mathfrak{q}\mathfrak{d}}|-\frac{|\mathcal{A}|}{\phi(\mathfrak{q}\mathfrak{d})}+\frac{|\mathcal{A}|}{\phi(\mathfrak{q}\mathfrak{d})}-\frac{|\mathcal{A}_\mathfrak{d}|}{\phi(\mathfrak{d})}
=r(\mathfrak{q}\mathfrak{d})-\frac{r(\mathfrak{q})}{\phi(\mathfrak{d})}.
\end{align*}

Thus %the error term of $S_2$ can be reduced to
\begin{align*}
\sum_{\substack{z\leq N(\mf{q})<y\\N(q, \mathcal{N})=1}}\sum_{\substack{\mf{d}|P(z)\\N(\mf{d})<D_{\mf{q}} Q}}|r_{\mf{q}}(\mf{d})|
&\leq \sum_{\substack{z\leq N(\mf{q})<y\\N(q, \mathcal{N})=1}}\sum_{\substack{\mf{d}|P(z)\\N(\mf{d})<D_{\mf{q}} Q}}|r(\mf{q}\mf{d})|+ \sum_{\substack{z\leq N(\mf{q})<y\\N(q, \mathcal{N})=1}}|r(\mf{q})|\sum_{\substack{\mf{d}|P(z)\\N(\mf{d})<D_{\mf{q}} Q}}\frac{1}{\phi(\mf{d})}\\
&\leq \sum_{\substack{4<N(\mf{d})<QD\\N(d ,\mathcal{N})=1}}|r(\mf{d})|+ \sum_{\substack{z\leq N(\mf{q})<y\\N(q, \mathcal{N})=1}}|r(\mf{q})|\sum_{\substack{\mf{d}|P(z)\\N(\mf{d})<D_q Q}}\frac{1}{\phi(\mf{d})}.
\end{align*}
Now for $z>4$, $y<DQ$ and applying the bound \eqref{reciprocal_sum_phi} in the last term, we can combine the above two terms into a single sum to obtain the bound as
\begin{align*}
\sum_{\substack{z\leq N(\mf{q})<y\\N(q, \mathcal{N})=1}}\sum_{\substack{\mf{d}|P(z)\\N(\mf{d})<D_{\mf{q}} Q}}|r_{\mf{q}}(\mf{d})|
 \ll \log N(\mathcal{N})\sum_{\substack{4<N(\mf{d})<QD\\N(d ,\mathcal{N})=1}}|r(\mf{d})| 
\ll \frac{N(\mathcal{N})}{(\log N(\mathcal{N}))^3},
\end{align*}
where the final inequality follows from the similar argument as in Lemma \ref{Bound error} and applying Lemma~\ref{B-V thm} with $A=4$.
\end{proof}
In the next lemma, we estimate the main term of $\displaystyle\sum_{z\leq  N(\mf{q})<y}S(\mathcal{A}_\mathfrak{q}, \mathcal{P}, z)$.
\begin{lemma}\label{S_2_Almost_final}
For any $\epsilon\in(0,\frac{1}{200})$, we have 
\begin{align*}
\sum_{\substack{z\leq N(\mf{q})<y\\N(q,\mathcal{N})=1}}(F(s_\mf{q})+\epsilon e^{14})|\mathcal{A}_q|< N(\mathcal{N})\Bigg[e^\gamma\sum_{\substack{z\leq N(\mf{q})<y\\N(q, \mathcal{N})=1}}\frac{1/N(\mf{q})}{\log\frac{N(\mathcal{N})^{1/2}}{N(\mf{q})}}+O\left(\sum_{\substack{z\leq N(\mf{q})<y\\N(q, \mathcal{N})=1}}\frac{1/N(\mf{q})^2}{\log\frac{N(\mathcal{N})^{1/2}}{N(\mf{q})}}\right)
+O\left(\frac{\epsilon}{\log N(\mathcal{N})}\right)\Bigg].
\end{align*}
\end{lemma}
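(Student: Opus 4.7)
The plan is to unravel $F(s_\mf{q})$ explicitly, replace $|\mathcal{A}_\mf{q}|$ by its expected main term, and control everything else through Bombieri--Vinogradov and Mertens-type sums. First I would verify that $s_\mf{q}$ always lies in the range where the closed form $sF(s)=2e^\gamma$ from \eqref{F(s) and f(s)} applies. Since $D_\mf{q}=D/N(\mf{q})$ with $D=N(\mathcal{N})^{1/2}/(\log N(\mathcal{N}))^{B_4+1}$ and $z=N(\mathcal{N})^{1/8}$, a direct substitution gives
\begin{align*}
s_\mf{q}=\frac{\log D_\mf{q}}{\log z}=4-\frac{8\log N(\mf{q})}{\log N(\mathcal{N})}-\frac{8(B_4+1)\log\log N(\mathcal{N})}{\log N(\mathcal{N})}\in(4/3,3]
\end{align*}
for all sufficiently large $N(\mathcal{N})$ and every $\mf{q}$ with $z\leq N(\mf{q})<y=N(\mathcal{N})^{1/3}$. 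Consequently $F(s_\mf{q})=2e^\gamma\log z/\log D_\mf{q}$, and expanding $\log D_\mf{q}=\log(N(\mathcal{N})^{1/2}/N(\mf{q}))-(B_4+1)\log\log N(\mathcal{N})$ together with the uniform lower bound $\log(N(\mathcal{N})^{1/2}/N(\mf{q}))\geq\tfrac{1}{6}\log N(\mathcal{N})$ yields
\begin{align*}
F(s_\mf{q})=\frac{(e^\gamma/4)\log N(\mathcal{N})}{\log(N(\mathcal{N})^{1/2}/N(\mf{q}))}\left(1+O\left(\frac{\log\log N(\mathcal{N})}{\log N(\mathcal{N})}\right)\right).
\end{align*}

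Next I would substitute $|\mathcal{A}_\mf{q}|=4\pi(N(\mathcal{N}))/\phi(\mf{q})+r(\mf{q})+O(\log N(\mathcal{N}))$ coming from \eqref{A_d}, Lemma \ref{Lem:A_d error} and \eqref{A card}, and split the sum into a main piece and a residual piece. Since $F(s_\mf{q})$ is uniformly bounded and $y<DQ$ lies inside the Bombieri--Vinogradov range, Lemma \ref{B-V thm} applied with $A=4$ controls the residual:
\begin{align*}
\sum_{\substack{z\leq N(\mf{q})<y\\N(q,\mathcal{N})=1}}(F(s_\mf{q})+\epsilon e^{14})|r(\mf{q})|\ll \sum_{4<N(\mf{q})<DQ}|r(\mf{q})|\ll \frac{N(\mathcal{N})}{(\log N(\mathcal{N}))^3},
\end{align*}
which is dominated by $N(\mathcal{N})\cdot O(\epsilon/\log N(\mathcal{N}))$ once $N(\mathcal{N})$ is sufficiently large. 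The companion $O(\log N(\mathcal{N}))$ term, summed with weight $F(s_\mf{q})$ over $N(\mf{q})<y$, is negligible for the same reason.

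For the principal contribution I would combine the factored form of $F(s_\mf{q})$ with the prime ideal theorem $\pi(N(\mathcal{N}))=N(\mathcal{N})/\log N(\mathcal{N})+O(N(\mathcal{N})/(\log N(\mathcal{N}))^2)$ from Lemma \ref{PIT} and the expansion $1/(N(\mf{q})-1)=1/N(\mf{q})+O(1/N(\mf{q})^2)$. Multiplying the three factors, the $\log N(\mathcal{N})$ produced by $F(s_\mf{q})$ cancels the one from $\pi(N(\mathcal{N}))$ and the factor $4$ cancels with $1/4$, delivering the leading density $e^\gamma N(\mathcal{N})/\bigl(N(\mf{q})\log(N(\mathcal{N})^{1/2}/N(\mf{q}))\bigr)$. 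The cross term from $O(1/N(\mf{q})^2)$ gives exactly the stated $O\bigl(\sum 1/N(\mf{q})^2/\log(N(\mathcal{N})^{1/2}/N(\mf{q}))\bigr)$ error, while the $O(1/\log N(\mathcal{N}))$ and $O(\log\log N(\mathcal{N})/\log N(\mathcal{N}))$ relative errors, after multiplication by the main sum and invocation of $\sum_{z\leq N(\mf{q})<y}1/N(\mf{q})=O(1)$ from Lemma \ref{Merten's theorem_in_z[i]}, collapse into the $O(\epsilon/\log N(\mathcal{N}))$ term for $N(\mathcal{N})$ large. Finally the $\epsilon e^{14}|\mathcal{A}_\mf{q}|$ piece, using the trivial bound $|\mathcal{A}_\mf{q}|\ll N(\mathcal{N})/(N(\mf{q})\log N(\mathcal{N}))$ and the same Mertens sum, contributes the $\epsilon$-part of the stated remainder directly.

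The main obstacle is bookkeeping: one must ensure that the $\log\log N(\mathcal{N})$ shift hidden in $\log D_\mf{q}$ never corrupts the leading constant $e^\gamma$, and that all relative errors genuinely decay faster than the fixed $\epsilon$. The lower bound $\log(N(\mathcal{N})^{1/2}/N(\mf{q}))\asymp \log N(\mathcal{N})$ throughout the $\mf{q}$-range is what rescues the argument: it forces every such correction to be $O(\log\log N(\mathcal{N})/\log N(\mathcal{N}))$, which is eventually smaller than $\epsilon$ and hence absorbed.
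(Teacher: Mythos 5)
Your proposal is correct and follows essentially the same route the paper takes: compute $s_\mf{q}$, observe it stays in the range where $sF(s)=2e^\gamma$, expand $F(s_\mf{q})=\frac{e^\gamma\log N(\mathcal{N})}{4\log(N(\mathcal{N})^{1/2}/N(\mf{q}))}(1+O(\log\log N(\mathcal{N})/\log N(\mathcal{N})))$, replace $|\mathcal{A}_\mf{q}|$ by its prime-ideal-theorem main term plus a discrepancy controlled by Lemma~\ref{B-V thm}, use $\sum 1/\phi(\mf{q})=O(1)$ to kill relative errors, and expand $1/\phi(\mf{q})=1/N(\mf{q})+O(1/N(\mf{q})^2)$ at the end. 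The only cosmetic differences are that the paper records the range as $(1,3]$ rather than your sharper $(4/3,3]$ (both suffice for the closed form of $F$), and it phrases the residual via the explicit quantity $\delta(N(\mathcal{N});q,\mathcal{N})$ rather than $r(\mf{q})$; these are equivalent up to an $O(\log N(\mathcal{N}))$ shift that is negligible in the stated range.
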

\begin{proof}
For $D=\displaystyle\frac{N(\mathcal{N})^{1/2}}{(\log N(\mathcal{N}))^{B_4+1}}$  with $B_4>0$, we can reduce $s_q$ as
\begin{align*}
s_q =\frac{\log\frac{N(\mathcal{N})^{1/2}}{N(\mf{q}) (\log N(\mathcal{N}))^{B_4+1}}}{\log z} =\frac{\log\frac{N(\mathcal{N})^{1/2}}{N(\mf{q})}}{\log z}-\frac{(B_4+1)\log \log N(\mathcal{N})}{\log z}.
	\end{align*}
Thus for $z <N(\mf{q})\leq y$ and for sufficiently large $N(\mathcal{N})$, we have $1<s_q \leq 3$.
%By an application of Lemma~\ref{Jurkat-Richert} with the above choice of $s_{\mf{q}}$, we can write 
%\begin{align}\label{S_2_midlle}
%	S_2&<V(z)\sum_{\substack{z\leq N(\mf{q})<y\\N(q,\mathcal{N})=1}}(F(s_\mf{q})+\epsilon e^{14})|\mathcal{A}_q|+\sum_{\substack{z\leq N(\mf{q})<y\\N(q, \mathcal{N})=1}}\sum_{\substack{\mf{d}|P(z)\\N(\mf{d})<\frac{DQ}{N(\mf{q})}}}|r_{\mf{q}}(\mf{d})|\nonumber\\
%	&=V(z)\sum_{\substack{z\leq N(\mf{q})<y\\N(q,\mathcal{N})=1}}(F(s_\mf{q})+\epsilon e^{14})|\mathcal{A}_q|+O\left(\frac{N(\mathcal{N})}{(\log N(\mathcal{N}))^3}\right),
%\end{align}
%where the last line is due to Lemma~\ref{Error_S_2}.\\
Therefore, the definition \eqref{F(s) and f(s)} yields
\begin{align}\label{F of S_2}
F(s_q) = \frac{e^\gamma \log N(\mathcal{N})}{4\log\frac{N(\mathcal{N})^{1/2}}{N(\mf{q})}}+O\left(\frac{\log\log N(\mathcal{N})}{\log N(\mathcal{N})}\right)
=\frac{e^\gamma \log N(\mathcal{N})}{4\log\frac{N(\mathcal{N})^{1/2}}{N(\mf{q})}}+O(\epsilon).
	\end{align}
We also have from \eqref{Formula_A_q} and \eqref{Delta} that 
	\begin{align}\label{Card of Aq}
		|A_{\mf{q}}|&=\pi(N(\mathcal{N}); q, \mathcal{N})+O(\log N(\mathcal{N}))\nonumber\\
		&=4\frac{\pi(N(\mathcal{N}))}{\phi(\mathfrak{q})}+\delta(N(\mathcal{N});q,\mathcal{N})+O(\log N(\mathcal{N}))\nonumber\\
		&=\frac{4 N(\mathcal{N})}{\phi(\mf{q})\log N(\mathcal{N})}\left(1+O\left(\frac{1}{\log N(\mathcal{N})}\right)\right)+\delta(N(\mathcal{N});q,\mathcal{N}),
	\end{align} 
where in the last step we have applied Lemma~\ref{PIT}. Applying \eqref{Upper bound of JR thm} of Lemma \ref{Jurkat-Richert} on $S(\mathcal{A}_\mathfrak{q}, \mathcal{P}, z)$, we now estimate the main term in \eqref{JR on S_2} and for that we first invoke \eqref{F of S_2} and \eqref{Card of Aq} into \eqref{Upper bound of JR thm} to write the main term as
	\begin{align}\label{Sum_over_F(s_q)}
		&\sum_{\substack{z\leq N(\mf{q})<y\\N(q,\mathcal{N})=1}}(F(s_\mf{q})+\epsilon e^{14})|\mathcal{A}_{\mf{q}}|\nonumber\\
		&=\sum_{\substack{z\leq N(\mf{q})<y\\N(q,\mathcal{N})=1}}\left[\left(\frac{e^\gamma \log N(\mathcal{N})}{4\log\frac{N(\mathcal{N})^{1/2}}{N(\mf{q})}} +O(\epsilon)\right)\frac{4 N(\mathcal{N})}{\phi(\mf{q})\log N(\mathcal{N})}\left(1+O\left(\frac{1}{\log N(\mathcal{N})}\right)\right)\right]\nonumber\\
		&+\sum_{\substack{z\leq N(\mf{q})<y\\N(q,\mathcal{N})=1}}\left(\frac{e^\gamma \log N(\mathcal{N})}{4\log\frac{N(\mathcal{N})^{1/2}}{N(\mf{q})}} +O(\epsilon)\right)\delta(N(\mathcal{N});q,\mathcal{N})\nonumber\\
		&=e^\gamma N(\mathcal{N})\sum_{\substack{z\leq N(\mf{q})<y\\N(q,\mathcal{N})=1}}\frac{1}{\phi(\mf{q})\log\frac{N(\mathcal{N})^{1/2}}{N(\mf{q})}}+O\left(\frac{\epsilon N(\mathcal{N})}{\log N(\mathcal{N})}\right)\sum_{\substack{z\leq N(\mf{q})<y\\N(q,\mathcal{N})=1}}\frac{1}{\phi(\mf{q})}\nonumber\\
		&+O\left(\frac{N(\mathcal{N})}{\log N(\mathcal{N})}\right)\sum_{\substack{z\leq N(\mf{q})<y\\N(q,\mathcal{N})=1}}\frac{1}{\phi(q\mf{q}\log\frac{N(\mathcal{N})^{1/2}}{N(\mf{q})}}+O\left(\frac{\epsilon N(\mathcal{N})}{(\log N(\mathcal{N}))^2}\right)\sum_{\substack{z\leq N(\mf{q})<y\\N(q,\mathcal{N})=1}}\frac{1}{\phi(\mf{q})}+O\left(\frac{N(\mathcal{N})}{(\log N(\mathcal{N}))^3}\right),
	\end{align}
where in the last sum of the penultimate step, we mainly used the fact that $F(s_{\mf{q}})$ is bounded for $1<s_{\mf{q}}\leq 3$ and applied Lemma~\ref{B-V thm} to obtain the final big-oh term in the last step. 

We next reduce the sums arrived in the above equation. It follows from Lemma~\ref{Merten's theorem_in_z[i]} that for sufficiently large $N(\mathcal{N})$, we can write
	\begin{align}\label{Reci_phi_over_prime}
		\sum_{\substack{z\leq N(\mf{q})<y\\N(q, \mathcal{N})=1}}\frac{1}{\phi(\mf{q})}&=\sum_{\substack{z\leq N(\mf{q})<y\\N(q, \mathcal{N})=1}}\frac{1}{N(\mf{q})-1}
		\ll \sum_{\substack{z\leq N(\mf{q})<y}}\frac{1}{N(\mf{q})}
		=\log\log y-\log\log z + O\left(\frac{1}{\log z}\right)
		=O(1).
	\end{align} 
Thus, the following sum can be bounded as
\begin{align}\label{in_between_main_term}
\sum_{\substack{z\leq N(\mf{q})<y\\N(q, \mathcal{N})=1}}\frac{1}{\phi(\mf{q})\log\frac{N(\mathcal{N})^{1/2}}{N(\mf{q})}}
		\ll\frac{1}{\log N(\mathcal{N})}\sum_{\substack{z\leq N(\mf{q})<y\\N(q, \mathcal{N})=1}}\frac{1}{\phi(\mf{q})}
		\ll\frac{1}{\log N(\mathcal{N})}.
	\end{align}
Inserting \eqref{Reci_phi_over_prime} and \eqref{in_between_main_term} into \eqref{Sum_over_F(s_q)} and using the fact that
\begin{align*}
		\frac{1}{\phi(\mf{q})}=\frac{1}{N(\mf{q})-1}=\frac{1}{N(\mf{q})}+O\left(\frac{1}{N(\mf{q})^2}\right),
	\end{align*}
the main term can be reduced to	
	\begin{align*}
		&\sum_{\substack{z\leq N(\mf{q})<y\\N(q,\mathcal{N})=1}}(F(s_\mf{q})+\epsilon e^{14})|\mathcal{A}_q|\nonumber\\
		&=e^\gamma N(\mathcal{N})\sum_{\substack{z\leq N(\mf{q})<y\\N(q, \mathcal{N})=1}}\frac{1}{\phi(\mf{q})\log\frac{N(\mathcal{N})^{1/2}}{N(\mf{q})}}+O\left(\frac{\epsilon N(\mathcal{N})}{\log N(\mathcal{N})}\right)\nonumber\\
		&=e^\gamma N(\mathcal{N})\sum_{\substack{z\leq N(\mf{q})<y\\N(q, \mathcal{N})=1}}\frac{1}{N(\mf{q})\log\frac{N(\mathcal{N})^{1/2}}{N(\mf{q})}}+O\left(N(\mathcal{N})\sum_{\substack{z\leq N(\mf{q})<y\\N(q, \mathcal{N})=1}}\frac{1}{N(\mf{q})^2\log\frac{N(\mathcal{N})^{1/2}}{N(\mf{q})}}\right)+O\left(\frac{\epsilon N(\mathcal{N})}{\log N(\mathcal{N})}\right).
	\end{align*}
This completes the proof of the lemma.
\end{proof}
In the following lemma, we obtain the upper bound of the sum $\displaystyle\sum_{z\leq  N(\mf{q})<y}S(\mathcal{A}_\mathfrak{q}, \mathcal{P}, z)$.
\begin{lemma}\label{Estimate_S_2}
For $y= N(\mathcal{N})^{\frac{1}{3}}$, $z= N(\mathcal{N})^{\frac{1}{8}}$ and any $\epsilon\in(0,\frac{1}{200})$, the bound
	\begin{align*}
		\sum_{z\leq  N(\mf{q})<y}S(\mathcal{A}_\mathfrak{q}, \mathcal{P}, z)<\frac{4V(z)N(\mathcal{N})}{\log N(\mathcal{N})}\left(\frac{e^\gamma \log 6}{2}+O(\epsilon)\right)
	\end{align*}
holds.	
\end{lemma}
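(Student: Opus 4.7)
The plan is to feed Lemma \ref{Error_S_2} and Lemma \ref{S_2_Almost_final} into the decomposition \eqref{JR on S_2}. The error contribution is $O(N(\mathcal{N})/(\log N(\mathcal{N}))^3)$ by Lemma \ref{Error_S_2}, which will be absorbed into the $O(\epsilon)$ term once multiplied by the correct factor, so the entire task reduces to extracting the constant $\frac{e^\gamma \log 6}{2}$ from the main term of Lemma \ref{S_2_Almost_final}. That is, I must evaluate asymptotically
\begin{align*}
\Sigma(\mathcal{N}) := \sum_{\substack{z\leq N(\mf{q})<y\\ N(q,\mathcal{N})=1}} \frac{1/N(\mf{q})}{\log(N(\mathcal{N})^{1/2}/N(\mf{q}))}
\end{align*}
for $z=N(\mathcal{N})^{1/8}$ and $y=N(\mathcal{N})^{1/3}$, and then multiply by the $V(z)$ prefactor from \eqref{JR on S_2}.

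First I would drop the coprimality condition $N(q,\mathcal{N})=1$ at the cost of $O(1/\log N(\mathcal{N}))$ (since only $O(\log N(\mathcal{N}))$ primes divide $\mathcal{N}$ and each contributes $O(1/z)$ to $\Sigma(\mathcal{N})$). Then I would apply partial summation to $\sum_{N(\mf{q})<t} 1$ using the prime ideal theorem (Lemma \ref{PIT}), $\pi(t)\sim t/\log t$, to convert $\Sigma(\mathcal{N})$ to
\begin{align*}
\Sigma(\mathcal{N}) = \int_z^y \frac{dt}{t\,(\log t)\,\log(N(\mathcal{N})^{1/2}/t)} + O\!\left(\frac{1}{(\log N(\mathcal{N}))^2}\right).
\end{align*}

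The heart of the argument is the change of variables $t = N(\mathcal{N})^s$, which yields $dt/t = (\log N(\mathcal{N}))\,ds$ and reduces the integral to
\begin{align*}
\frac{1}{\log N(\mathcal{N})} \int_{1/8}^{1/3} \frac{ds}{s(\tfrac{1}{2}-s)} = \frac{1}{\log N(\mathcal{N})}\cdot 2\Bigl[\log\tfrac{s}{\tfrac{1}{2}-s}\Bigr]_{1/8}^{1/3} = \frac{2\log 6}{\log N(\mathcal{N})},
\end{align*}
where the partial fraction $\frac{1}{s(\frac{1}{2}-s)} = \frac{2}{s} + \frac{2}{\frac{1}{2}-s}$ and the endpoint evaluations $\frac{1/3}{1/6}=2$, $\frac{1/8}{3/8}=\frac{1}{3}$ give $2\log 2 + 2\log 3 = 2\log 6$. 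Meanwhile the secondary sum $\sum_{z\le N(\mf{q})<y} 1/(N(\mf{q})^2 \log(N(\mathcal{N})^{1/2}/N(\mf{q})))$ appearing in Lemma \ref{S_2_Almost_final} is $O(1/(z\log N(\mathcal{N})))$ by the trivial bound, hence negligible.

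Assembling the pieces, Lemma \ref{S_2_Almost_final} gives
\begin{align*}
\sum_{\substack{z\leq N(\mf{q})<y\\ N(q,\mathcal{N})=1}}(F(s_\mf{q})+\epsilon e^{14})|\mathcal{A}_\mf{q}| < N(\mathcal{N})\!\left(\frac{2e^\gamma \log 6}{\log N(\mathcal{N})} + O\!\left(\frac{\epsilon}{\log N(\mathcal{N})}\right)\right),
\end{align*}
which multiplied by $V(z)$ and combined with the error term via Lemma \ref{Error_S_2} produces exactly $\frac{4V(z)N(\mathcal{N})}{\log N(\mathcal{N})}\bigl(\frac{e^\gamma \log 6}{2}+O(\epsilon)\bigr)$. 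The main technical obstacle is justifying the partial-summation-to-integral step with sharp enough error control, specifically controlling the contribution from the integration against $1/\log(N(\mathcal{N})^{1/2}/t)$ as $t$ approaches $y$ from below — here I would verify that $\log(N(\mathcal{N})^{1/2}/y) = \frac{1}{6}\log N(\mathcal{N})$ is bounded away from zero, so no singularity occurs in the relevant range.
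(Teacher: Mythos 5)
Your proposal is correct and follows essentially the same route as the paper: feed Lemmas \ref{Error_S_2} and \ref{S_2_Almost_final} into \eqref{JR on S_2}, convert the main prime sum to the integral $\int_z^y \frac{dt}{t\log t\,\log(N(\mathcal{N})^{1/2}/t)}$, and substitute $t=N(\mathcal{N})^s$ to extract $2\log 6/\log N(\mathcal{N})$. The only cosmetic difference is that the paper applies Mertens' estimate (Lemma \ref{Merten's theorem_in_z[i]}) directly to $S(t)=\sum_{N(\mathfrak{q})<t}1/N(\mathfrak{q})$ rather than doing partial summation against $\pi(t)$ via the prime ideal theorem, but the resulting Riemann--Stieltjes integral and error control are identical.
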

\begin{proof}
For any natural number $k$, we define an arithmetic function
\begin{align*}
a_k := \begin{cases}
			\frac{1}{N(\mf{q})} & \text{ if } k=N(\mf{q}) \text{ for some prime ideal } \mf{q} \in \mathbb{Z}[i],\\
		\ \	0 & \text{ otherwise. }
		\end{cases}
\end{align*} 
For any $t>0$, let $S(t)$ denotes the partial sum of $a_k$ given by
	\begin{align*}
		S(t):=\sum_{\substack{\mf{q}\\N(\mf{q})<t}}\frac{1}{N(\mf{q})}=\sum_{1\leq k<t}a_k.
	\end{align*}
Converting the partial sum into Riemann-Stieltjes integral, we have
	\begin{align}\label{Abel_summation}
		\sum_{\substack{z\leq N(\mf{q})<y\\N(q, \mathcal{N})=1}}\frac{1}{N(\mf{q})\log\frac{N(\mathcal{N})^{1/2}}{N(\mf{q})}}
		&=\int_{z}^{y}\frac{1}{\log\left(\frac{N(\mathcal{N})^{1/2}}{t}\right)}dS(t)\nonumber\\
		&=\int_{z}^{y}\frac{1}{\log\left(\frac{N(\mathcal{N})^{1/2}}{t}\right)}d\left(\log\log t +B+O\left(\frac{1}{\log t}\right)\right)\nonumber\\
		&=\int_{z}^{y}\frac{1}{\log\left(\frac{N(\mathcal{N})^{1/2}}{t}\right)}d(\log\log t)+O\left(\int_{z}^{y}\frac{1}{t(\log t)^2 \log\left(\frac{N(\mathcal{N})^{1/2}}{t}\right)}dt\right),
	\end{align}
where the penultimate step follows from Lemma~\ref{Merten's theorem_in_z[i]}. Now making the change of variable $t=N(\mathcal{N})^{x}$ and inserting the values of $y$ and $z$, the first integral reduces to 
\begin{align}\label{6.1 integral}
\int_{z}^{y}\frac{1}{\log\left(\frac{N(\mathcal{N})^{1/2}}{t}\right)}d(\log\log t) 
= \frac{1}{\log N(\mathcal{N})}\int_{1/8}^{1/3}\frac{dx}{x(1/2-x)}
=\frac{2\log 6}{\log N(\mathcal{N})}	
\end{align}
and the second integral can be bounded as
\begin{align}\label{6.2 integral}
\int_{z}^{y}\frac{1}{t(\log t)^2 \log\left(\frac{N(\mathcal{N})^{1/2}}{t}\right)}dt\ll \frac{1}{(\log N(N))^3}\int_{z}^{y} \frac{dt}{t}  \ll \frac{1}{(\log N(N))^2}.
\end{align}
Thus by invoking \eqref{6.1 integral} and \eqref{6.2 integral} into \eqref{Abel_summation}, we obtain 
	\begin{align}\label{1st_sum_in_S_2}
		\sum_{\substack{z\leq N(\mf{q})<y\\N(q, \mathcal{N})=1}}\frac{1}{N(\mf{q})\log\frac{N(\mathcal{N})^{1/2}}{N(\mf{q})}}=\frac{2\log 6}{\log N(\mathcal{N})}+O\left(\frac{1}{(\log N(N))^2}\right).
	\end{align}
Also the following sum can be bounded as
\begin{equation}\label{2nd_sum_in_S_2}
\sum_{\substack{z\leq N(\mf{q})<y\\N(q, \mathcal{N})=1}}\frac{1}{N(\mf{q})^2\log\frac{N(\mathcal{N})^{1/2}}{N(\mf{q})}}
\ll \frac{1}{\log N(\mathcal{N})}\sum_{\substack{z\leq N(\mf{q})<y}}\frac{1}{N(\mf{q})^2}
\ll \frac{1}{z\log N(\mathcal{N})}\sum_{\substack{z\leq N(\mf{q})<y}}\frac{1}{N(\mf{q})}
\ll\frac{1}{z\log N(\mathcal{N})},
\end{equation}
where the last step holds due to \eqref{Reci_phi_over_prime}. Finally after applying \eqref{1st_sum_in_S_2} and \eqref{2nd_sum_in_S_2} together in Lemma~\ref{S_2_Almost_final}, we combine \eqref{JR on S_2}, Lemma \ref{Error_S_2} and Lemma~\ref{S_2_Almost_final} to conclude
	\begin{align*}
\sum\limits_{\substack{z\leq N(\mf{q})<y}}\hspace{-5pt}S(\mathcal{A}_\mathfrak{q}, \mathcal{P},z)
&<V(z)N(\mathcal{N})\left[e^\gamma\left(\frac{2\log 6}{\log N(\mathcal{N})}+O\left(\frac{1}{(\log N(N))^2}\right)\right)+O\left(\frac{1}{z\log N(\mathcal{N})}\right)+O\left(\frac{\epsilon}{\log N(\mathcal{N})}\right)\right]\\
&\qquad +O\left(\frac{N(\mathcal{N})}{(\log N(\mathcal{N}))^3}\right)\\
		&=\frac{4V(z)N(\mathcal{N})}{\log N(\mathcal{N})}\left[\frac{e^\gamma \log 6}{2}+O\left(\frac{1}{\log N(\mathcal{N})}\right)+O\left(\frac{1}{z}\right)+O(\epsilon)+O\left(\frac{1}{V(z)(\log N(\mathcal{N}))^2}\right)\right]\\
		&=\frac{4V(z)N(\mathcal{N})}{\log N(\mathcal{N})}\left(\frac{e^\gamma \log 6}{2}+O(\epsilon)\right),
	\end{align*}
where the final step follows by inserting the value $z=N(\mathcal{N})^{1/8}$ and the expression of $V(z)$ from Lemma~\ref{lemma_for_V(z)}. This completes the proof of the lemma.
\end{proof}
		
\section{Upper bound of $S(\mathcal{B},\mathcal{P},y)$}\label{Estimation of S(B, P, y)}
In this section, we find an upper bound of the sieving function $S(\mathcal{B},\mathcal{P},y)$, where the set $\mathcal{B}$ is defined in \eqref{Def of B} as 
\begin{align*}
	\mathcal{B}=\{\mathcal{N}-p_1 p_2 p_3:&z\leq N(p_1)<y\leq N(p_2)\leq N(p_3),N(p_1 p_2 p_3)<4N(\mathcal{N}),N(p_1 p_2 p_3, \mathcal{N})=1\}.
\end{align*}
We divide the range of $N(p_1)$ into disjoint intervals as 
\begin{align*}
 [z,y)=\bigcup_{k=0}^{r}\left[\ell_k, \ell_{k+1}\right),
\end{align*}
where $\ell_k = z(1+\epsilon)^k$ for some $\epsilon>0$. Then, for every $0\leq k\leq r$, it follows that $z(1+\epsilon)^{1+k}\leq y$, which implies
\begin{align}\label{bournd of r}
k\leq\frac{\log(y/z)}{\log (1+\epsilon)}\ll \frac{\log N(\mathcal{N})}{\epsilon}.
\end{align}
We next define the set
\begin{align}\label{Def of B^k}
\mathcal{B}^{(k)}:=\{\mathcal{N}-p_1 p_2 p_3:&z\leq N(p_1)<y\leq N(p_2)\leq N(p_3),\ell_k\leq N(p_1)<\ell_{k+1},\nonumber\\
& \ell_kN(p_2 p_3)<4N(\mathcal{N}), N(p_2 p_3, \mathcal{N})=1\}
\end{align}
and denote $\tilde{\mathcal{B}}:=\bigcup_{k=0}^{r}\mathcal{B}^{(k)}$. Clearly, it follows from the above definitions that $\mathcal{B} \subseteq \tilde{\mathcal{B}}$ and since the sets $\mathcal{B}^{(k)}$ are pairwise disjoint, we have 
\begin{align}\label{Inequality_B_B'}
	S(\mathcal{B},P,y)\leq S(\tilde{\mathcal{B}}, P,y)=\sum_{k=0}^{r}S(\mathcal{B}^{(k)}, P,y).
\end{align}
Thus the above equation implies that the upper bound of  $S(\mathcal{B}^{(k)}, P,y)$ will lead to the upper bound of  
$S(\tilde{\mathcal{B}}, P,y)$ and the resulting bound will also work for $S(\mathcal{B},P,y)$. We next bound the error term of $S(\mathcal{B}^{(k)}, P,y)$ and for that we need the following lemma, which can be considered as an analogue of \cite[Theorem 10.7]{Nathanson} in $\mathbb{Z}[i]$ set up. 
	\begin{lemma}\label{errorbound}
Let $A, X, Y, Z$ be positive real numbers with $X>(\log Y)^{2A}$ and $D^*=\frac{(XY)^{1/2}}{(\log Y)^A}$. Then for any complex-valued function $a(n)$ on $\Z[i]$, we have
\begin{align*}
		\sum_{\substack{N(\mf{d})<D^*\\\mf{d}|P(y)}}\max_{N(a,d)=1}\left|\sum_{N(n)<X}\sum_{\substack{Z\leq N(p)<Y\\np\equiv a(\bmod d)}}a(n)-\frac{1}{\phi(\mf{d})}\sum_{N(n)<X}\sum_{\substack{Z\leq N(p)<Y\\N(np,d)=1}}a(n)\right|\ll_A \frac{XY(\log XY)^2}{(\log Y)^A}.
\end{align*}
\end{lemma}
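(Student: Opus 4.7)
The plan is to adapt Nathanson's proof of \cite[Theorem~10.7]{Nathanson} to the Gaussian-integer setting, with the two essential analytic inputs being the $\Z[i]$-version of Bombieri--Vinogradov already recorded in Lemma~\ref{B-V thm} and Huxley's large-sieve inequality for Hecke characters of $\Z[i]$ referenced in the paper from \cite{Huxley}. The bilinear structure of the product $np$ is precisely what permits enlarging the admissible range of $N(\mf d)$ from the $Y^{1/2}/(\log Y)^{B_{A}}$ of ordinary Bombieri--Vinogradov up to $(XY)^{1/2}/(\log Y)^{A}$; throughout I will implicitly assume the standard normalisation $|a(n)|\leq 1$, which the statement of the lemma silently presupposes.

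First I would fix $\mf d\mid P(y)$ with $N(\mf d)<D^{*}$ (so $\mf d$ is squarefree), observe that $np\equiv a\pmod{d}$ with $N(a,d)=1$ forces $N(n,d)=1$ in every contributing term, and apply orthogonality of Dirichlet characters modulo $\mf d$ to rewrite the quantity inside the absolute value as
\begin{equation*}
E(d,a)=\frac{1}{\phi(\mf d)}\sum_{\chi\neq\chi_{0}\,(\bmod\mf d)}\bar\chi(a)A(\chi)P(\chi),\quad A(\chi):=\sum_{N(n)<X}a(n)\chi(n),\ P(\chi):=\sum_{Z\leq N(p)<Y}\chi(p).
\end{equation*}
Taking $\max_{N(a,d)=1}$ and summing over admissible $\mf d$ bounds the quantity of interest by $\sum_{\mf d}\phi(\mf d)^{-1}\sum_{\chi\neq\chi_{0}}|A(\chi)||P(\chi)|$. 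I would then factor each $\chi$ as induced from a primitive character $\chi^{*}$ of conductor $\mf d^{*}\mid \mf d$, replacing $A(\chi)$ and $P(\chi)$ by their primitive counterparts $A^{*}(\chi^{*}),P^{*}(\chi^{*})$ up to a controllable boundary term, and reorganise the outer sum into a sum over conductors $\mf d^{*}$ weighted by a divisor sum over multiples of $\mf d^{*}$ of size $O(\log D^{*})$.

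An application of Cauchy--Schwarz then reduces the problem to estimating
\begin{equation*}
\sum_{N(\mf d^{*})<D^{*}}\frac{1}{\phi(\mf d^{*})}\sideset{}{^{*}}\sum_{\chi\,(\bmod\mf d^{*})}|A^{*}(\chi)|^{2}\quad\text{and}\quad\sum_{N(\mf d^{*})<D^{*}}\frac{1}{\phi(\mf d^{*})}\sideset{}{^{*}}\sum_{\chi\,(\bmod\mf d^{*})}|P^{*}(\chi)|^{2},
\end{equation*}
where $\sideset{}{^{*}}\sum$ denotes summation over primitive characters. Huxley's large sieve in $\Z[i]$ dominates the first mean-square by $\ll(D^{*\,2}+X)\|a\|_{2}^{2}\ll X^{2}Y$. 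For the second I would split the range at $N(\mf d^{*})=(\log Y)^{C}$: on conductors below this threshold the Siegel--Walfisz estimate for Hecke $L$-functions (the analytic input underlying Lemma~\ref{B-V thm}) supplies the required saving of an arbitrary power of $\log Y$, while on conductors above it Huxley's large sieve again yields a bound of shape $Y\log^{O(1)}Y$. Choosing $C=C(A)$ optimally and substituting $D^{*}=(XY)^{1/2}/(\log Y)^{A}$ produces the claimed estimate $\ll_{A}XY(\log XY)^{2}/(\log Y)^{A}$. The principal technical obstacle is this patching of the Siegel--Walfisz and large-sieve regimes for the prime character sum $P^{*}(\chi)$ uniformly in the conductor, which mirrors the crossover step in the classical Bombieri--Vinogradov proof; since both the $\Z[i]$-large sieve and the $\Z[i]$-version of Bombieri--Vinogradov are at our disposal, Nathanson's argument transfers with only cosmetic bookkeeping for the factor of $4$ arising from the units of $\Z[i]$.
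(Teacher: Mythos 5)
Your proposal takes essentially the same route as the paper: both defer to the argument of Nathanson's Theorem~10.7 (Bombieri's bilinear form of the Bombieri--Vinogradov theorem), with Huxley's large sieve for $\Z[i]$ as the key analytic input and a Gaussian Siegel--Walfisz estimate handling small conductors. The paper itself does not spell out the details beyond quoting the $\Z[i]$ large sieve; your sketch fills in the outline correctly, including the reduction to primitive characters and the small/large conductor split.

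One caution about the wording of your Cauchy--Schwarz step: as written, you apply Cauchy--Schwarz over the whole conductor range $N(\mf d^*)<D^*$ and then bound the two resulting global mean-squares. This is too lossy; plugging the large sieve bounds $(D^{*2}+X)X$ and $(D^{*2}+Y)\pi(Y)$ into that product yields a term of size roughly $(XY)^{3/2}/(\log Y)^{2A}$, which exceeds the target $XY(\log XY)^2/(\log Y)^{A}$ by the huge factor $(XY)^{1/2}$. The standard and necessary fix -- present in Nathanson's proof -- is to first split the conductor range dyadically, apply Cauchy--Schwarz and the large sieve separately within each dyadic block $D_1<N(\mf d^*)\le 2D_1$ (extracting the weight $1/\phi(\mf d^*)\asymp 1/D_1$ outside), optimise over $D_1$, and reserve Siegel--Walfisz for $D_1<(\log Y)^{C(A)}$ where the large-sieve bound degenerates. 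With the dyadic decomposition in place, the bookkeeping closes and one recovers the stated estimate; without it, the global Cauchy--Schwarz does not.
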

The proof of the lemma goes almost along the similar direction as in \cite[pp. 292]{Nathanson} with few modifications. The main ingredient to prove the lemma is the large sieve inequality in $\mathbb{Z}[i]$, which was originally established by Huxley \cite[Theorem 4]{Huxley1} for general number field. In $\mathbb{Z}[i]$, the inequality [cf. \cite[Equation~(3)]{Arpit}] precisely states that for any $L,M\geq 1$ and for any complex-valued function $b(n)$ on $\mathbb{Z}[i]$, we have
	\begin{align*}
		\sum_{\substack{N(\mf{d})<L}}\frac{N(\mf{d})}{\phi(\mf{d})}\sideset{}{^*}\sum_{\substack{\chi(\bmod d)}}\left|\sum_{N(n)\leq M}b(n)\chi(n)\right|^2\ll (L^2 + M)\sum_{N(n)\leq M}|b(n)|^2,
	\end{align*}
where $\sideset{}{^*}\sum$ denotes the sum runs over primitive characters modulo $d$.
Let $\tilde{R}$ denotes the error term of $S(\tilde{\mathcal{B}}, P,y)$. In the following lemma we bound $\tilde{R}$.
\begin{lemma}\label{lemma_for_R_tilde}
Let $z=N(\mathcal{N})^{1/8}$ and $y=N(\mathcal{N})^{1/3}$. Then for $D:=\frac{N(\mathcal{N})^{1/2}}{(\log N(\mathcal{N}))^{7}}$, we have 
\begin{align*}
\tilde{R} \ll \frac{N(\mathcal{N})}{\epsilon(\log N(\mathcal{N}))^3}.
\end{align*}
\end{lemma}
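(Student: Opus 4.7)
The plan is to decompose $\tilde{R}$ into a sum over the dyadic blocks $\mathcal{B}^{(k)}$ from \eqref{Def of B^k} and invoke Lemma~\ref{errorbound} on each block, with the Gaussian prime $p_1$ playing the role of $p$ and the product $m=p_2 p_3$ playing the role of $n$.

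Since the sets $\mathcal{B}^{(k)}$ are pairwise disjoint and $\tilde{\mathcal{B}}=\bigcup_{k=0}^{r}\mathcal{B}^{(k)}$, the remainder term arising from the Rosser-type expansion in Lemma~\ref{Jurkat-Richert} applied to $S(\tilde{\mathcal{B}},P,y)$ splits as
\begin{equation*}
\tilde{R}\le \sum_{k=0}^{r}\sum_{\substack{\mathfrak{d}|P(y)\\ N(\mathfrak{d})<DQ}}|r^{(k)}(\mathfrak{d})|,
\end{equation*}
where $r^{(k)}(\mathfrak{d}):=|\mathcal{B}^{(k)}_{\mathfrak{d}}|-|\mathcal{B}^{(k)}|/\phi(\mathfrak{d})$. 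For each fixed $k$ I would rewrite $|\mathcal{B}^{(k)}_{\mathfrak{d}}|$ as a double sum over a Gaussian prime $p_1$ with $\ell_k\le N(p_1)<\ell_{k+1}$ and over $m\in\mathbb{Z}[i]$, weighted by
\begin{equation*}
a(m):=\#\{(p_2,p_3): p_2 p_3=m,\ y\le N(p_2)\le N(p_3),\ N(m,\mathcal{N})=1,\ N(m)<4N(\mathcal{N})/\ell_k\},
\end{equation*}
subject to $m p_1\equiv \mathcal{N}\pmod{\mathfrak{d}}$. This matches exactly the shape of Lemma~\ref{errorbound}.

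I would then apply Lemma~\ref{errorbound} with $Z=\ell_k$, $Y=\ell_{k+1}$, $X=4N(\mathcal{N})/\ell_k$. Since $\ell_k<y=N(\mathcal{N})^{1/3}$ one has $X\ge 4N(\mathcal{N})^{2/3}$, so the hypothesis $X>(\log Y)^{2A}$ holds easily; and since $\ell_{k+1}/\ell_k=1+\epsilon$ and $\log\ell_k\gg \log N(\mathcal{N})$, the level condition becomes $D^{*}:=(XY)^{1/2}/(\log Y)^{A}\gg N(\mathcal{N})^{1/2}/(\log N(\mathcal{N}))^{A}$, which dominates $DQ$ whenever $A$ is taken large enough (say $A=6$), using \eqref{Bound on Q}. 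Lemma~\ref{errorbound} then gives the per-block bound
\begin{equation*}
\sum_{\substack{\mathfrak{d}|P(y)\\ N(\mathfrak{d})<DQ}}|r^{(k)}(\mathfrak{d})|\ll\frac{XY(\log XY)^{2}}{(\log Y)^{A}}\ll \frac{(1+\epsilon)N(\mathcal{N})}{(\log N(\mathcal{N}))^{A-2}}.
\end{equation*}

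Summing over $k\le r\ll \log N(\mathcal{N})/\epsilon$ by \eqref{bournd of r} and choosing $A=6$ yields
\begin{equation*}
\tilde{R}\ll \frac{\log N(\mathcal{N})}{\epsilon}\cdot \frac{N(\mathcal{N})}{(\log N(\mathcal{N}))^{4}}=\frac{N(\mathcal{N})}{\epsilon(\log N(\mathcal{N}))^{3}},
\end{equation*}
as required. The main obstacle I anticipate is verifying cleanly that $|\mathcal{B}^{(k)}_{\mathfrak{d}}|$ and its expected main term $|\mathcal{B}^{(k)}|/\phi(\mathfrak{d})$ correspond to the two inner sums inside the absolute value in Lemma~\ref{errorbound}; the built-in coprimality condition $N(np,d)=1$ there has to be reconciled with the fact that in $\mathcal{B}^{(k)}_{\mathfrak{d}}$ the divisibility $\mathfrak{d}\mid \mathcal{N}-p_1 p_2 p_3$ automatically forces $N(p_1 p_2 p_3,d)=1$, but this is routine bookkeeping and does not affect the estimates above.
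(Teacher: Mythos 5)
Your proposal is correct and follows essentially the same route as the paper: decompose $\tilde{R}$ over the blocks $\mathcal{B}^{(k)}$, apply Lemma~\ref{errorbound} per block with $Z=\ell_k$, $Y=\ell_{k+1}$, $X=4N(\mathcal{N})/\ell_k$ and $A=6$, and sum over the $r\ll\epsilon^{-1}\log N(\mathcal{N})$ blocks. The one thing the paper makes explicit that you leave implicit is the discrepancy from the coprimality condition $N(p_1,d)=1$ built into Lemma~\ref{errorbound} versus the unrestricted main term $|\mathcal{B}^{(k)}|/\phi(\mathfrak{d})$; the paper bounds this extra piece by $O\bigl(N(\mathcal{N})^{7/8}\log N(d)/\phi(\mathfrak{d})\bigr)$ per $\mathfrak{d}$, summing to $O\bigl(N(\mathcal{N})^{7/8}(\log N(\mathcal{N}))^2\bigr)$, which is indeed negligible as you claim.
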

\begin{proof}
Letting $g(\mathfrak{d}) = \frac{1}{\phi(\mathfrak{d})}$, \eqref{A_d} implies that the main term of $|\mathcal{B}^{(k)}_{\mf{d}}|$ is given by $\frac{|\mathcal{B}^{(k)}|}{\phi(\mf{d})}$. For $r^{(k)}_{\mf{d}}$ denoting the error term of $|\mathcal{B}^{(k)}_{\mf{d}}|$, it follows from the definition \eqref{Def of B^k} of $\mathcal{B}^{(k)}$ that
	\begin{align*}
r^{(k)}_{\mf{d}}
&=|\mathcal{B}^{(k)}_\mf{d}|-\frac{|\mathcal{B}^{(k)}|}{\phi(\mf{d})}
=\sum_{\substack{z\leq N(p_1)<y\leq N(p_2)\leq N(p_3)\\\ell_k\leq N(p_1)<\ell_{k+1}\\\ell_k N(p_2 p_3)<4N(\mathcal{N})\\N(p_2 p_3, \mathcal{N})=1\\p_1 p_2 p_3\equiv\mathcal{N}(\bmod d)}}1-\frac{1}{\phi(\mf{d})}\sum_{\substack{z\leq N(p_1)<y\leq N(p_2)\leq N(p_3)\\\ell_k\leq N(p_1)<\ell_{k+1}\\\ell_k N(p_2 p_3)<4N(\mathcal{N})\\N(p_2 p_3, \mathcal{N})=1}}1\\
&=\sum_{\substack{y\leq N(p_2)\leq N(p_3)\\N(p_2 p_3)<4N(\mathcal{N})/\ell_k\\N(p_2 p_3,\mathcal{N})=1}} \ \sum_{\substack{\ell_k\leq N(p_1)<\ell_{k+1}\\p_1 p_2 p_3\equiv \mathcal{N}(\bmod d)}}1
-\frac{1}{\phi(\mf{d})}\sum_{\substack{y\leq N(p_2)\leq N(p_3)\\N(p_2 p_3)<4N(\mathcal{N})/\ell_k\\N(p_2 p_3,\mathcal{N})=1}}\sum_{\substack{\ell_k\leq N(p_1)<\ell_{k+1}\\ N(p_1,d)=1}}1	
- \frac{1}{\phi(\mf{d})}\sum_{\substack{y\leq N(p_2)\leq N(p_3)\\\ell_k\leq N(p_1)<\ell_{k+1}\\ \ell_k N(p_2 p_3)<4N(\mathcal{N})\\N(p_2 p_3,\mathcal{N})=1\\N(p_1,d)>1}}1	
	\end{align*}
The last sum of the above equation can be bounded as 
\begin{align*}
\frac{1}{\phi(\mf{d})}\sum_{\substack{y\leq N(p_2)\leq N(p_3)\\\ell_k\leq N(p_1)<\ell_{k+1}\\ \ell_k N(p_2 p_3)<4N(\mathcal{N})\\N(p_2 p_3,\mathcal{N})=1\\N(p_1,d)>1}}1
\leq \frac{1}{\phi(\mf{d})}\sum_{\substack{ N(p_1)\geq z\\N(p_1,d)>1}}\sum_{N(p_2 p_3)<\frac{4(1+\epsilon)N(\mathcal{N})}{N(p_1)}}1
&\leq \frac{4(1+\epsilon)N(\mathcal{N})}{\phi(\mf{d})}\sum_{\substack{ N(p_1)\geq z\\p_1\mid d}}\frac{1}{N(p_1)}\\
		&\leq \frac{4(1+\epsilon)N(\mathcal{N})\omega(d)}{z\phi(\mf{d})}\ll \frac{N(\mathcal{N})^{7/8}\log N(d)}{\phi(\mf{d})}.
\end{align*}
Note that $N(p_1, d)=1$ is equivalent to $N(p_1 p_2 p_3,d)=1$ as $\mf{d}|P(y)$. Therefore, from the above bound, $r^{(k)}_{\mf{d}}$ can be reduced to 
	\begin{align}\label{rdk for B}
		r^{(k)}_{\mf{d}}=\sum_{\substack{y\leq N(p_2)\leq N(p_3)\\N(p_2 p_3)<4N(\mathcal{N})/\ell_k\\N(p_2 p_3,\mathcal{N})=1}} \ \sum_{\substack{\ell_k\leq N(p_1)<\ell_{k+1}\\p_1 p_2 p_3\equiv \mathcal{N}(\bmod d)}}1
-\frac{1}{\phi(\mf{d})}\sum_{\substack{y\leq N(p_2)\leq N(p_3)\\N(p_2 p_3)<4N(\mathcal{N})/\ell_k\\N(p_2 p_3,\mathcal{N})=1}}\sum_{\substack{\ell_k\leq N(p_1)<\ell_{k+1}\\ N(p_1 p_2 p_3,d)=1}}1
		+O\left(\frac{N(\mathcal{N})^{7/8}\log N(d)}{\phi(\mf{d})}\right).
	\end{align}
We next invoke Lemma \ref{errorbound} to bound $r^{(k)}_{\mf{d}}$ and for that we set $a(n)$ to be the characteristic function of the set 
	\begin{align*}
		\{n=p_2 p_3: y\leq N(p_2)\leq N(p_3), N(p_2 p_3, \mathcal{N})=1\}.
	\end{align*}
Letting $X=4N(\mathcal{N})/\ell_k, Y = \ell_{k+1}$,
	$Z=\ell_k$ and $a=\mathcal{N}$, we can rewrite \eqref{rdk for B} as 
	\begin{align}\label{r(k)_d_before_apply}
		r^{(k)}_{\mf{d}}=\sum_{N(n)<X}\sum_{\substack{Z\leq N(p_1)<Y\\np_1\equiv a(\bmod d)}}a(n)-\frac{1}{\phi(\mf{d})}\sum_{N(n)<X}\sum_{\substack{Z\leq N(p_1)<Y\\N(np_1,d)=1}}a(n)+O\left(\frac{N(\mathcal{N})^{7/8}\log N(d)}{\phi(\mf{d})}\right).
	\end{align}
Inserting the value of $y$, we bound $X$ in terms of $Y$ as 	 \begin{align*}
		X=\frac{4N(\mathcal{N})}{l_k}>\frac{4N(\mathcal{N})}{y}>(\log y)^{2A}\geq (\log Y)^{2A}, 
	\end{align*}
for any $A>0$. For $D:=\displaystyle\frac{N(\mathcal{N})^{1/2}}{(\log N(\mathcal{N}))^{A+1}}$ and $Q<\log N(\mathcal{N})$, in the current set up, the lower bound of $D^*$, appeared in Lemma \ref{errorbound} can be reduced to
	\begin{align*}
		D^*&=\frac{(XY)^{1/2}}{(\log Y)^A}\geq \frac{\left(\frac{4N(\mathcal{N})}{\ell_k} \ell_{k+1}\right)^{1/2}}{(\log y)^A}\geq 3^A\frac{2N(\mathcal{N})^{1/2}}{(\log N(\mathcal{N}))^A}>DQ.
	\end{align*}
Thus \eqref{r(k)_d_before_apply} and Lemma~\ref{errorbound} together bound the error term of $S(\mathcal{B}^{(k)}, P,y)$ as
	\begin{align*}
\sum_{\substack{N(\mf{d})<DQ\\\mf{d}|P(y)}}|r^{(k)}_{\mf{d}}|
&\leq \sum_{\substack{N(\mf{d})<D^*\\\mf{d}|P(y)}}|r^{(k)}_{\mf{d}}|\\
		&=\sum_{\substack{N(\mf{d})<D^*\\\mf{d}|P(y)}}\left|\sum_{N(n)<X}\sum_{\substack{Z\leq N(p_1)<Y\\np_1\equiv a(\bmod d)}}\hspace{-10pt}a(n)-\frac{1}{\phi(\mf{d})}\sum_{N(n)<X}\sum_{\substack{Z\leq N(p_1)<Y\\N(np_1,d)=1}}\hspace{-8pt}a(n)\right|+O\left(\sum_{\substack{N(\mf{d})<D^*\\\mf{d}|P(y)}}\hspace{-8pt}\frac{N(\mathcal{N})^{7/8}\log N(d)}{\phi(\mf{d})}\right)\\
		&\ll \frac{XY(\log XY)^2}{(\log Y)^A}+N(\mathcal{N})^{7/8}\log D^*\sum_{N(\mf{d})<D^*}\frac{1}{\phi(\mf{d})}\\
        &\ll \frac{XY(\log XY)^2}{(\log Y)^A}+N(\mathcal{N})^{7/8} (\log D^*)^2,		
	\end{align*}
where in the last step we have used Lemma \ref{Lem:reciprocal_sum_phi}. Now the first term of the above equation can be bounded as
	\begin{align*}
		\frac{XY(\log XY)^2}{(\log Y)^A}
		\leq \frac{4(1+\epsilon)N(\mathcal{N}) \left(\log\left(4(1+\epsilon)N(\mathcal{N})\right)\right)^2}{(\log z)^A}
		\ll \frac{N(\mathcal{N})}{(\log N(\mathcal{N}))^{A-2}},
	\end{align*}
and for the second term we use the trivial upper bound of $D^*$ as 
	\begin{align*}
		D^*<(XY)^{1/2}<N(\mathcal{N}).
	\end{align*}	
Therefore, the error term of $S(\mathcal{B}^{(k)}, P,y)$ can be written as
\begin{align*}
\sum_{\substack{N(\mf{d})<DQ\\\mf{d}|P(y)}}|r^{(k)}_{\mf{d}}|
		\ll \frac{N(\mathcal{N})}{(\log N(\mathcal{N}))^{A-2}}+N(\mathcal{N})^{7/8} (\log N(\mathcal{N}))^2
		\ll \frac{N(\mathcal{N})}{(\log N(\mathcal{N}))^{A-2}}.
\end{align*}	
We choose $A=6$ in the above equation to bound the above error term as
	\begin{align*}
		\sum_{\substack{N(\mf{d})<DQ\\\mf{d}|P(y)}}|r^{(k)}_{\mf{d}}|\ll \frac{N(\mathcal{N})}{(\log N(\mathcal{N}))^4}
	\end{align*}
for large $N(\mathcal{N})$. Finally we apply the bound \eqref{bournd of r} to bound $\tilde{R}$ as
	\begin{align*}
		\tilde{R}=\sum_{k=0}^{r}\sum_{\substack{N(\mf{d})<DQ\\\mf{d}|P(y)}}|r^{(k)}_{\mf{d}}|
		\ll \frac{\log N(\mathcal{N})}{\epsilon}\frac{N(\mathcal{N})}{(\log N(\mathcal{N}))^4}
		=\frac{\epsilon^{-1}N(\mathcal{N})}{(\log N(\mathcal{N}))^3}.
	\end{align*}
This completes the proof of the lemma. 
\end{proof}
The next lemma provides an upper bound of $S(\tilde{\mathcal{B}}, P,y)$.
\begin{lemma}\label{last_sec_first_lem}
%Let $D=\frac{N(\mathcal{N})^{1/2}}{(\log N(\mathcal{N}))^7}$. 
For $y=N(\mathcal{N})^{1/3}$ and $z=N(\mathcal{N})^{1/8}$, the following identity holds: 
\begin{align*}
	S(\tilde{\mathcal{B}}, P,y)\leq \left(\frac{e^\gamma}{2}+O(\epsilon)\right)|\tilde{\mathcal{B}}|V(z)+O\left(\frac{\epsilon^{-1}N(\mathcal{N})}{(\log N(\mathcal{N}))^3}\right).
\end{align*}
\end{lemma}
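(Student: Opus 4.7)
The plan is to apply the upper bound of the linear sieve (Lemma \ref{Jurkat-Richert}) to each set $\mathcal{B}^{(k)}$ with sieve parameter $y$ and level of distribution $D=N(\mathcal{N})^{1/2}/(\log N(\mathcal{N}))^7$, then sum over $k$ using the disjointness of the decomposition $\tilde{\mathcal{B}}=\bigsqcup_{k=0}^r \mathcal{B}^{(k)}$ together with \eqref{Inequality_B_B'}. Since the multiplicative function $g(\mf{p})=1/\phi(\mf{p})$ was already verified to satisfy the linear sieve hypothesis \eqref{g_linear} outside the finite set $\mathcal{Q}$ with $Q<\log N(\mathcal{N})$, the hypotheses of Lemma \ref{Jurkat-Richert} are met, and \eqref{Upper bound of JR thm} yields
\begin{align*}
S(\mathcal{B}^{(k)},\mathcal{P},y) < \bigl(F(s_y)+\epsilon e^{14-s_y}\bigr)V(y)|\mathcal{B}^{(k)}| + \sum_{\substack{\mf{d}\mid P(y)\\ N(\mf{d})<DQ}}|r^{(k)}_{\mf{d}}|,
\end{align*}
where $s_y=\log D/\log y$.

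Next I would compute the asymptotic of the leading constant. With $y=N(\mathcal{N})^{1/3}$ and the chosen $D$, one obtains
\begin{align*}
s_y=\frac{\log D}{\log y}=\frac{3}{2}-\frac{21\log\log N(\mathcal{N})}{\log N(\mathcal{N})},
\end{align*}
which lies in the range $1\le s_y\le 3$ for large $N(\mathcal{N})$, so by \eqref{F(s) and f(s)},
\begin{align*}
F(s_y)=\frac{2e^{\gamma}}{s_y}=\frac{4e^{\gamma}}{3}+O\!\left(\frac{\log\log N(\mathcal{N})}{\log N(\mathcal{N})}\right)=\frac{4e^{\gamma}}{3}+O(\epsilon).
\end{align*}
Lemma \ref{lemma_for_V(z)} applied at $y$ and $z$ gives
\begin{align*}
\frac{V(y)}{V(z)}=\frac{\log z}{\log y}\left(1+O\!\left(\frac{1}{\log z}\right)\right)=\frac{3}{8}+O\!\left(\frac{1}{\log N(\mathcal{N})}\right),
\end{align*}
so that $F(s_y)V(y)=\tfrac{e^{\gamma}}{2}V(z)+O(\epsilon)V(z)$, and the extra $\epsilon e^{14-s_y}V(y)$ contribution is absorbed into $O(\epsilon)V(z)$.

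Summing over $0\le k\le r$ and invoking the disjointness $|\tilde{\mathcal{B}}|=\sum_k|\mathcal{B}^{(k)}|$ together with the definition $\tilde{R}=\sum_k\sum_{\mf{d}\mid P(y),\,N(\mf{d})<DQ}|r^{(k)}_{\mf{d}}|$, I would conclude
\begin{align*}
S(\tilde{\mathcal{B}},\mathcal{P},y)\le \sum_{k=0}^{r}S(\mathcal{B}^{(k)},\mathcal{P},y) < \left(\frac{e^{\gamma}}{2}+O(\epsilon)\right)V(z)|\tilde{\mathcal{B}}|+\tilde{R},
\end{align*}
and then apply Lemma \ref{lemma_for_R_tilde} to replace $\tilde{R}$ by $O(\epsilon^{-1}N(\mathcal{N})/(\log N(\mathcal{N}))^3)$, which yields the claimed bound. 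The main delicate point will be the bookkeeping that shows the per-$k$ error $\epsilon e^{14-s_y}V(y)|\mathcal{B}^{(k)}|$ aggregates to $O(\epsilon)V(z)|\tilde{\mathcal{B}}|$ uniformly in $k$ — this works because $s_y$ depends only on $D$ and $y$, not on $k$, so the bound on $F(s_y)+\epsilon e^{14-s_y}$ is uniform across the decomposition.
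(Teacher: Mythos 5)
Your proposal is correct and follows essentially the same route as the paper: apply the upper-bound linear sieve to each $\mathcal{B}^{(k)}$ with sieving parameter $y$ and level $D=N(\mathcal{N})^{1/2}/(\log N(\mathcal{N}))^7$, compute $s_y\to 3/2$ so that $F(s_y)=\tfrac{4e^\gamma}{3}+O(\epsilon)$, convert $V(y)=\tfrac{3}{8}V(z)(1+O(1/\log N(\mathcal{N})))$ via Lemma~\ref{lemma_for_V(z)}, multiply to get the constant $\tfrac{e^\gamma}{2}$, and sum over $k$ using disjointness before invoking Lemma~\ref{lemma_for_R_tilde} for the error term. The only cosmetic difference is that the paper notes $s\in(1,2)$ while you allow $1\le s_y\le 3$, which changes nothing since $sF(s)=2e^\gamma$ holds on all of $[1,3]$.
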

\begin{proof}
For $s=\frac{\log D}{\log y}$ with $D=\frac{N(\mathcal{N})^{1/2}}{(\log N(\mathcal{N}))^7}$ and for sufficiently large $N(\mathcal{N})$, we have
\begin{align*}
	s&=\frac{\log\frac{N(\mathcal{N})^{1/2}}{(\log N(\mathcal{N}))^7}}{\log N(\mathcal{N})^{1/3}}=\frac{3}{2}+O\left(\frac{\log \log N(\mathcal{N})}{\log N(\mathcal{N})}\right)\in (1,2).
\end{align*}
The definition \eqref{F(s) and f(s)} of $F(s)$ yields 
\begin{align}\label{F(s)_for_y}
	F(s)&=\frac{2e^\gamma}{s}=\frac{4e^\gamma}{3}+O\left(\frac{\log\log N(\mathcal{N})}{\log N(\mathcal{N})}\right)=\frac{4e^\gamma}{3}+O(\epsilon).
\end{align}
It follows from Lemma~\ref{lemma_for_V(z)} that 
\begin{align}\label{V(y)}
	V(y)&=\frac{V(y)}{V(z)}V(z)=\frac{\log z}{\log y}\left(1+O\left(\frac{1}{\log N(\mathcal{N})}\right)\right)V(z)=\frac{3}{8}\left(1+O\left(\frac{1}{\log N(\mathcal{N})}\right)\right)V(z).
\end{align}
Now applying Lemma~\ref{Jurkat-Richert} with the above choice of $D$ and $y$ and then using the values of $F(s)$ and $V(y)$ from \eqref{F(s)_for_y} and \eqref{V(y)} respectively, we bound $S(\mathcal{B}^{(k)},\mathcal{P},y)$ as 
\begin{align*}
	S(\mathcal{B}^{(k)},\mathcal{P},y)&<\left(\frac{4e^\gamma}{3}+O(\epsilon)\right)|\mathcal{B}^{(k)}|\frac{3}{8}\left(1+O\left(\frac{1}{\log N(\mathcal{N})}\right)\right)V(z)+\sum_{\substack{N(\mf{d})<DQ\\\mf{d}|P(y)}}|r^{(k)}_{\mf{d}}|\nonumber\\
	&<\left(\frac{e^\gamma}{2}+O(\epsilon)\right)|\mathcal{B}^{(k)}|V(z)+\sum_{\substack{N(\mf{d})<DQ\\\mf{d}|P(y)}}|r^{(k)}_{\mf{d}}|.
\end{align*}
Finally, summing over $k$ on both sides of the above equation and applying Lemma \ref{lemma_for_R_tilde}, we conclude our lemma.
\end{proof}

In the following lemma we bound the cardinality of the set $\tilde{\mathcal{B}}$.
\begin{lemma}\label{last_sec_2nd_lemma}
For $\epsilon\in\left(0,\frac{1}{200}\right)$ small enough, we have the bound
	\begin{align*}
		|\tilde{\mathcal{B}}|<\frac{256(1+3\epsilon)cN(\mathcal{N})}{\log N(\mathcal{N})}+O\left(\frac{N(\mathcal{N})}{(\log N(\mathcal{N}))^2}\right),
	\end{align*}
	where $c$ takes the value $0.363\ldots$.
\end{lemma}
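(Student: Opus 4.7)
The plan is to count $|\mathcal{B}^{(k)}|$ for each block by two nested applications of the prime ideal theorem (Lemma~\ref{PIT}), sum over the $O(\log N(\mathcal{N})/\epsilon)$ blocks, and recognize the resulting expression as a Riemann sum for the integral defining $c$. I would first discard the coprimality condition $N(p_2p_3,\mathcal{N})=1$ in the definition of $\mathcal{B}^{(k)}$; since $\omega(\mathcal{N})\ll\log N(\mathcal{N})$, this costs at most $O(N(\mathcal{N})/(\log N(\mathcal{N}))^2)$, which is absorbed into the final error term. Writing $m_k:=4N(\mathcal{N})/\ell_k$, for each Gaussian prime $p_1$ with $\ell_k\leq N(p_1)<\ell_{k+1}$ the inner count of pairs of Gaussian primes $(p_2,p_3)$ with $y\leq N(p_2)\leq N(p_3)$ and $N(p_2p_3)<m_k$ equals
\[
16\sum_{y\leq N(\mf{p}_2)\leq\sqrt{m_k}}\bigl(\pi(m_k/N(\mf{p}_2))-\pi(N(\mf{p}_2))\bigr),
\]
the $16$ coming from the four associates of each of $p_2,p_3$.

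Abel summation combined with Lemma~\ref{PIT} converts this to an integral. Setting $\alpha_k:=\log\ell_k/\log N(\mathcal{N})$ so that $\log m_k=(1-\alpha_k)\log N(\mathcal{N})+O(1)$, the substitution $t=m_k^u$ yields
\[
\sum_{y\leq N(\mf{p}_2)\leq\sqrt{m_k}}\frac{m_k/N(\mf{p}_2)}{\log(m_k/N(\mf{p}_2))}=\frac{m_k}{\log m_k}\int_{1/(3(1-\alpha_k))}^{1/2}\frac{du}{u(1-u)}\bigl(1+o(1)\bigr),
\]
and $\int_\beta^{1/2}\frac{du}{u(1-u)}=\log((1-\beta)/\beta)=\log(2-3\alpha_k)+o(1)$ upon simplification. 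Multiplying by the $p_1$ count $4(\pi(\ell_{k+1})-\pi(\ell_k))=\frac{4\epsilon\ell_k}{\alpha_k\log N(\mathcal{N})}(1+O(\epsilon))$ produces
\[
|\mathcal{B}^{(k)}|=\frac{256\,\epsilon\,N(\mathcal{N})}{(\log N(\mathcal{N}))^2}\cdot\frac{\log(2-3\alpha_k)}{\alpha_k(1-\alpha_k)}\bigl(1+O(\epsilon)\bigr)+O\!\left(\frac{N(\mathcal{N})}{\ell_k(\log N(\mathcal{N}))^2}\right),
\]
where the constant $256=4\cdot 4\cdot 16$ packages the four associates for each of $p_1,p_2,p_3$ together with the factor $4$ inside $m_k=4N(\mathcal{N})/\ell_k$.

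Summing over $k=0,\dots,r$, since $\alpha_{k+1}-\alpha_k=\log(1+\epsilon)/\log N(\mathcal{N})=\epsilon/\log N(\mathcal{N})+O(\epsilon^2/\log N(\mathcal{N}))$, the main sum $\sum_k\log(2-3\alpha_k)/(\alpha_k(1-\alpha_k))$ is a Riemann sum with mesh $\epsilon/\log N(\mathcal{N})$ for
\[
c:=\int_{1/8}^{1/3}\frac{\log(2-3\alpha)}{\alpha(1-\alpha)}\,d\alpha=0.363\ldots,
\]
evaluating to $(c/\epsilon)\log N(\mathcal{N})\,(1+O(\epsilon))$. The $\epsilon$'s cancel to produce the leading term $256\,c\,N(\mathcal{N})/\log N(\mathcal{N})$, and the factor $(1+3\epsilon)$ in the statement absorbs the three independent $O(\epsilon)$ contributions: Riemann discretization, the $\epsilon\ell_k$-spread within each block, and the PIT/Abel error in the inner pair count. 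The accumulated absolute error is $\sum_k O\!\bigl(N(\mathcal{N})/(\ell_k(\log N(\mathcal{N}))^2)\bigr)=O(N(\mathcal{N})/(z(\log N(\mathcal{N}))^2))=O(N(\mathcal{N})/(\log N(\mathcal{N}))^2)$ by geometric convergence of $\sum_k 1/\ell_k$.

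The main obstacle will be the error bookkeeping in the nested prime ideal theorem application: the inner use of Lemma~\ref{PIT} on $p_3$ is performed with an upper bound $m_k/N(p_2)$ that itself depends on the outer summation variable, and one must confirm that the accumulated error remains $O(\epsilon)$ in the leading constant rather than growing logarithmically in either $N(\mathcal{N})$ or $1/\epsilon$. A related subtlety is the uniform control of the Riemann sum discretization on $[1/8,1/3]$; this is safe because $\log(2-3\alpha)/(\alpha(1-\alpha))$ is continuous and bounded on the closed interval, the zero at $\alpha=1/3$ being first-order smooth while the denominator stays bounded below.
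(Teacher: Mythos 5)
Your proposal is correct and reaches the same constant $c=\int_{1/8}^{1/3}\frac{\log(2-3\alpha)}{\alpha(1-\alpha)}\,d\alpha$, but it arrives there by a genuinely different organization of the computation. The paper does not use the block decomposition $\tilde{\mathcal{B}}=\bigcup_k\mathcal{B}^{(k)}$ at all for counting $|\tilde{\mathcal{B}}|$: it first enlarges $\tilde{\mathcal{B}}$ to the single superset $\{\mathcal{N}-p_1p_2p_3 : z\leq N(p_1)<y\leq N(p_2)\leq N(p_3),\,N(p_1p_2p_3)<4(1+\epsilon)N(\mathcal{N})\}$, applies the prime ideal theorem once to the $p_3$-sum to obtain a double sum over $\mathfrak{p}_1,\mathfrak{p}_2$, and then evaluates that double sum by two successive Riemann--Stieltjes integrations against $d(\log\log t)$ (via Lemma~\ref{Merten's theorem_in_z[i]}), introducing an auxiliary function $H(u)$ to package the inner integral before the change of variables $t=N(\mathcal{N})^v$, $u=N(\mathcal{N})^w$ produces the double integral $\int_{1/8}^{1/3}\int_{1/3}^{(1-w)/2}\frac{dv\,dw}{vw(1-v-w)}$. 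Your version instead counts each $|\mathcal{B}^{(k)}|$ separately, freezing $N(p_1)\approx\ell_k$ within each block, and recovers the integral as the $\epsilon\to 0$ limit of a Riemann sum over the $O(\log N(\mathcal{N})/\epsilon)$ blocks. The paper's route is cleaner in that it avoids the $\epsilon$-versus-mesh-size cancellation and the uniform-over-$k$ error bookkeeping that you correctly flag as the delicate point; your route has the aesthetic merit of re-using the block structure that is already in play for the remainder estimate of $S(\tilde{\mathcal{B}},\mathcal{P},y)$ in Lemma~\ref{lemma_for_R_tilde}, so the whole Section~\ref{Estimation of S(B, P, y)} becomes more homogeneous. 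Both count the factor $256=4^3\cdot 4$ identically (three sets of associates plus the $4$ inside the norm bound $N(p_1p_2p_3)<4N(\mathcal{N})$).

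One small inaccuracy in your write-up: dropping the coprimality condition $N(p_2p_3,\mathcal{N})=1$ does not ``cost'' anything that needs to be absorbed into the error term. Since the whole lemma is an \emph{upper} bound on $|\tilde{\mathcal{B}}|$, enlarging the set by discarding a restriction is free; the $\omega(\mathcal{N})\ll\log N(\mathcal{N})$ estimate you invoke is not needed here. (It would be needed if you wanted a two-sided asymptotic.) This does not affect the validity of the argument.
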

\begin{proof}
It follows from the definition of $\tilde{\mathcal{B}}$ that 
	\begin{align*}
		\tilde{\mathcal{B}}\subseteq\{\mathcal{N}-p_1 p_2 p_3:&z\leq N(p_1)<y\leq N(p_2)\leq N(p_3),N(p_1 p_2 p_3)<4(1+\epsilon)N(\mathcal{N})\}.
	\end{align*}
%From $N(p_2)\leq N(p_3)$ and $N(p_1 p_2 p_3)<4(1+\epsilon)N(\mathcal{N})$, we have  $N(p_1)N(p_2)^2 <4(1+\epsilon)N(\mathcal{N})$ and $N(p_3)<\frac{4(1+\epsilon)N(\mathcal{N})}{N(p_1)N(p_2)}.$
Thus, we can bound the cardinality of $\tilde{\mathcal{B}}$ as
	\begin{align*}
		|\tilde{\mathcal{B}}|\leq\sum_{\substack{p_1, p_2, p_3\\z\leq N(p_1)<y\leq N(p_2)\leq N(p_3)\\N(p_1 p_2 p_3)<4(1+\epsilon)N(\mathcal{N})}}1
		&\leq \sum_{\substack{p_1, p_2\\z\leq N(p_1)<y\leq N(p_2)\\N(p_1)N(p_2)^2<4(1+\epsilon)N(\mathcal{N})}}\sum_{\substack{p_3\\ N(p_3)<\frac{4(1+\epsilon)N(\mathcal{N})}{N(p_1)N(p_2)}}}1\\
		&= 4\sum_{\substack{p_1, p_2\\z\leq N(p_1)<y\leq N(p_2)\\N(p_1)N(p_2)^2<4(1+\epsilon)N(\mathcal{N})}}\pi\left(\frac{4(1+\epsilon)N(\mathcal{N})}{N(p_1)N(p_2)}\right)
\end{align*}
Applying Lemma \ref{PIT} on the above summand for sufficiently large $N(\mathcal{N})$, the above bound reduces to
\begin{align}\label{Bound of Btil}	
|\tilde{\mathcal{B}}|&<16(1+3\epsilon)\sum_{\substack{p_1, p_2\\z\leq N(p_1)<y\leq N(p_2)\\N(p_1)N(p_2)^2<4(1+\epsilon)N(\mathcal{N})}}\frac{N(\mathcal{N})}{N(p_1 p_2)\log\frac{N(\mathcal{N})}{N(p_1 p_2)}}\nonumber\\
		&=16(1+3\epsilon)N(\mathcal{N})\sum_{\substack{p_1\\z\leq N(p_1)<y}}\frac{1}{N(p_1)}\sum_{\substack{p_2\\y\leq N(p_2)<\left(\frac{4(1+\epsilon)N(\mathcal{N})}{N(p_1)}\right)^{1/2}}}\frac{1}{N(p_2)\log\frac{N(\mathcal{N})}{N(p_1 p_2)}}\nonumber\\
		&=256(1+3\epsilon)N(\mathcal{N})\sum_{\substack{\mf{p}_1\\z\leq N(\mf{p}_1)<y}}\frac{1}{N(\mf{p}_1)}\sum_{\substack{\mf{p}_2\\y\leq N(\mf{p}_2)<\left(\frac{4(1+\epsilon)N(\mathcal{N})}{N(\mf{p}_1)}\right)^{1/2}}}\frac{1}{N(\mf{p}_2)\log\frac{N(\mathcal{N})}{N(\mf{p}_1 \mf{p}_2)}},
	\end{align}
where the last step follows by taking the sums over the prime ideals generated by the Gaussian primes. We next bound the above double sum by Converting the partial sum into Riemann-Stieltjes integral twice and for that we first set $b_k$, an arithmetic function given by
\begin{align*}
b_k := \begin{cases}
			\frac{1}{N(\mf{p}_2)} & \text{ if } k=N(\mf{p}_2) \text{ for some prime ideal } \mf{p}_2 \in \mathbb{Z}[i],\\
		\ \	0 & \text{ otherwise, }
		\end{cases}
\end{align*} 
and for any $t>0$, we denote the partial sum of $b_k$ by $T(t)$ given by
	\begin{align*}
		T(t):=\sum_{\substack{\mf{p}_2\\N(\mf{p}_2)<t}}\frac{1}{N(\mf{p}_2)}=\sum_{1\leq k<t}b_k.
	\end{align*}
Letting $y_0 = \left(\frac{4(1+\epsilon)N(\mathcal{N})}{N(\mf{p}_1)}\right)^{1/2}$, the second sum in \eqref{Bound of Btil} can reduced into
\begin{align}\label{Second sum of double}
\sum_{\substack{\mf{p}_2\\y\leq N(\mf{p}_2)<y_0}}\frac{1}{N(\mf{p}_2)\log\frac{N(\mathcal{N})}{N(\mf{p}_1 \mf{p}_2)}}
		&=\int_{y}^{y_0} \frac{1}{\log\frac{N(\mathcal{N})}{N(\mf{p}_1) t}} dT(t)\nonumber\\
		&=\int_{y}^{y_0} \frac{1}{\log\frac{N(\mathcal{N})}{N(\mf{p}_1) t}} d(\log\log t) + O\left(\int_{y}^{y_0}\frac{1}{t(\log t)^2 \log\frac{N(\mathcal{N})}{N(\mf{p}_1) t}}dt\right),
	\end{align}
where the last step follows from Lemma~\ref{Merten's theorem_in_z[i]}. Applying the similar argument as in \eqref{6.2 integral}, the last integral of the above equation can be bounded as
\begin{align}\label{I2 in 7.21}
\int_{y}^{y_0}\frac{1}{t(\log t)^2 \log\frac{N(\mathcal{N})}{N(\mf{p}_1) t}}dt \ll \frac{1}{(\log N(N))^2}.
\end{align}
For the first integral, we  split the integral into two parts namely,
\begin{align*}
\int_{y}^{y_0} \frac{1}{\log\frac{N(\mathcal{N})}{N(\mf{p}_1) t}} d(\log\log t) = \int_{y}^{(N(\mathcal{N})/N(\mf{p}_1))^{1/2}} \frac{1}{\log\frac{N(\mathcal{N})}{N(\mf{p}_1) t}} d(\log\log t) + \int_{(N(\mathcal{N})/N(\mf{p}_1))^{1/2}}^{y_0} \frac{1}{\log\frac{N(\mathcal{N})}{N(\mf{p}_1) t}} d(\log\log t). 
\end{align*}
Now, making change of variable $t=\left(\frac{N(\mathcal{N})}{N(\mf{p}_1)}\right)^{1/2}r$, the second part can be bounded as
\begin{align*}
		\int_{(N(\mathcal{N})/N(\mf{p}_1))^{1/2}}^{y_0} \frac{1}{\log\frac{N(\mathcal{N})}{N(\mf{p}_1) t}} d(\log\log t)
		&=\int_{1}^{2\sqrt{1+\epsilon}}\frac{1}{r\left[\left(\log\sqrt{\frac{N(\mathcal{N})}{N(\mf{p}_1)}}\right)^2-(\log s)^2\right]}dr, \\
		&=\int_{0}^{\log(2\sqrt{1+\epsilon})}\frac{1}{\left(\log\sqrt{\frac{N(\mathcal{N})}{N(p_1)}}\right)^2-r^2}dr\\
		&\ll \frac{1}{(\log N(\mathcal{N}))^2},
\end{align*}
and for the first part, we abbreviate the integral by defining a function 
$$H(u):=\int_{y}^{(N(\mathcal{N})/u)^{1/2}}\frac{1}{\log(N(\mathcal{N})/ut)}d(\log \log t).$$
Thus, the first integral in \eqref{Second sum of double} reduces to
\begin{align}\label{I1 in 7.21}
\int_{y}^{y_0} \frac{1}{\log\frac{N(\mathcal{N})}{N(\mf{p}_1) t}} d(\log\log t) = H(N(\mf{p}_1))+O\left(\frac{1}{(\log N(\mathcal{N}))^2}\right)
\end{align}
Therefore, by inserting \eqref{I2 in 7.21} and \eqref{I1 in 7.21} into \eqref{Second sum of double}, we can rephrase the double sum in \eqref{Bound of Btil} as
	\begin{align}\label{Double sum}
		\sum_{\substack{\mf{p}_1\\z\leq N(\mf{p}_1)<y}}\frac{1}{N(\mf{p}_1)}\sum_{\substack{\mf{p}_2\\y\leq N(\mf{p}_2)<y_0}}\frac{1}{N(\mf{p}_2)\log\frac{N(\mathcal{N})}{N(\mf{p}_1 \mf{p}_2)}}
		&=\sum_{\substack{\mf{p}_1\\z\leq N(\mf{p}_1)<y}}\frac{1}{N(\mf{p}_1)}\left(H(N(\mf{p}_1))+O\left(\frac{1}{(\log N(\mathcal{N}))^2}\right)\right)\nonumber\\
		&=\sum_{\substack{\mf{p}_1\\z\leq N(\mf{p}_1)<y}}\frac{H(N(\mf{p}_1))}{N(\mf{p}_1)}+O\left(\frac{1}{(\log N(\mathcal{N}))^2}\right).
	\end{align}
For the above finite sum, we again convert into Riemann-stieltjes integral and proceed similarly as in \eqref{Second sum of double} to obtain
\begin{align*}
\sum_{\substack{\mf{p}_1\\z\leq N(\mf{p}_1)<y}}\frac{H(N(\mf{p}_1))}{N(\mf{p}_1)}
&=\int_{z}^{y}H(u)d(\log\log u)+O\left(\frac{\max{(H(z),H(y)}}{\log y}\right)\\
		&=\int_{z}^{y}H(u)d(\log\log u)+O\left(\frac{1}{(\log N(\mathcal{N})^2}\right),
\end{align*}
where in the last step we have used the fact that $H(y)=0$ and $H(z)=O\left(\frac{1}{\log N(\mathcal{N})}\right)$, which follows directly from the definition of $H(u)$. Making change of variables $t=N(\mathcal{N})^v$ and $u=N(\mathcal{N})^w$, the above integral reduces to 
	\begin{align}\label{value of c}
		\int_{z}^{y}H(u)d(\log\log u)
		=\frac{1}{\log N(\mathcal{N})}\int_{\frac{1}{8}}^{\frac{1}{3}}\hspace{-4pt}\int_{\frac{1}{3}}^{\frac{1-w}{2}}\hspace{-8pt}\frac{1}{vw(1-v-w)}dv dw
		=\frac{1}{\log N(\mathcal{N})}\int_{\frac{1}{8}}^{\frac{1}{3}}\frac{\log(2-3w)}{\log(w(1-w)}dw
		=\frac{c}{\log N(\mathcal{N})},
	\end{align}
	where 
	$c=\int_{\frac{1}{8}}^{\frac{1}{3}}\frac{\log(2-3w)}{\log(w(1-w)}dw=0.363\ldots$. Finally, after inserting \eqref{value of c} into \eqref{Double sum}, \eqref{Bound of Btil} concludes the bound of $|\tilde{\mathcal{B}}|$. This completes the proof of the lemma.
\end{proof}

\begin{lemma}\label{Estimate_S_3}
Let $y=N(\mathcal{N})^{1/3}$ and $z=N(\mathcal{N})^{1/8}$. Then for some $\epsilon>0$, we have 
\begin{align*}
	S(\mathcal{B},P,y)<\frac{4N(\mathcal{N})V(z)}{\log N(\mathcal{N})}\left(32ce^\gamma +O(\epsilon)\right)+O\left(\frac{\epsilon^{-1}N(\mathcal{N})}{(\log N(\mathcal{N}))^3}\right),
\end{align*}
where $c$ takes the value $0.363\ldots$.
\begin{proof}
The first inequality in \eqref{Inequality_B_B'} together with Lemma \ref{last_sec_first_lem} implies
\begin{align*}
	S(\mathcal{B},P,y)&<\left(\frac{e^\gamma}{2}+O(\epsilon)\right)|\tilde{\mathcal{B}}|V(z)+O\left(\frac{\epsilon^{-1}N(\mathcal{N})}{(\log N(\mathcal{N}))^3}\right).
\end{align*}
Thus invoking the bound of $|\tilde{\mathcal{B}}|$ from Lemma \ref{last_sec_2nd_lemma}, we can obtain the upper bound of $S(\mathcal{B},P,y)$. 
\end{proof}
\end{lemma}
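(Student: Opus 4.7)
The proof plan is essentially a combination of the two preceding lemmas, so the work is almost entirely bookkeeping. First, I would appeal to the chain $S(\mathcal{B},\mathcal{P},y)\leq S(\tilde{\mathcal{B}},\mathcal{P},y)$ from \eqref{Inequality_B_B'}, which lets me replace the set $\mathcal{B}$ by the slightly enlarged set $\tilde{\mathcal{B}}$ that is better suited for sieving. Then I would invoke Lemma \ref{last_sec_first_lem} to pass to a bound of the shape
\begin{align*}
S(\mathcal{B},\mathcal{P},y)\leq\left(\frac{e^\gamma}{2}+O(\epsilon)\right)|\tilde{\mathcal{B}}|\,V(z)+O\!\left(\frac{\epsilon^{-1}N(\mathcal{N})}{(\log N(\mathcal{N}))^3}\right).
\end{align*}

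At this point the sieving has been absorbed into the constants $e^\gamma/2$ and the $V(z)$ factor, and what remains is purely a cardinality estimate for $\tilde{\mathcal{B}}$. I would simply insert the bound of Lemma \ref{last_sec_2nd_lemma}, namely $|\tilde{\mathcal{B}}|\leq 256(1+3\epsilon)cN(\mathcal{N})/\log N(\mathcal{N})+O(N(\mathcal{N})/(\log N(\mathcal{N}))^2)$, and do the arithmetic: multiplying the leading constants gives $\tfrac{1}{2}\cdot 256c=128c$, and pulling a factor of $4N(\mathcal{N})/\log N(\mathcal{N})$ out in front produces the target coefficient $32ce^\gamma$. The $O(\epsilon)$ from the sieving and the $O(\epsilon)$ from the factor $(1+3\epsilon)$ merge into a single $O(\epsilon)$, while the secondary term $O(N(\mathcal{N})/(\log N(\mathcal{N}))^2)$ in the cardinality, once multiplied by $V(z)\sim 1/\log N(\mathcal{N})$ (using Lemma \ref{lemma_for_V(z)}), contributes only $O(N(\mathcal{N})/(\log N(\mathcal{N}))^3)$, which is absorbed into the existing $\epsilon^{-1}$ error.

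I do not anticipate any real obstacle in this step: all the analytical content, including the application of the Jurkat--Richert type inequality, the large-sieve-based bound on the remainder term, and the double-integral evaluation giving the constant $c=0.363\ldots$, has already been carried out in Lemmas \ref{last_sec_first_lem} and \ref{last_sec_2nd_lemma}. The only care required is to verify that the choices $z=N(\mathcal{N})^{1/8}$ and $y=N(\mathcal{N})^{1/3}$ used in those two lemmas match the hypotheses of the current lemma (which they do), and to keep the $\epsilon$-dependencies straight when combining the two errors.
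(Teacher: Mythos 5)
Your proposal matches the paper's proof exactly: pass from $S(\mathcal{B},\mathcal{P},y)$ to $S(\tilde{\mathcal{B}},\mathcal{P},y)$ via \eqref{Inequality_B_B'}, apply Lemma \ref{last_sec_first_lem}, substitute the cardinality bound from Lemma \ref{last_sec_2nd_lemma}, and verify the constant $\tfrac{1}{2}\cdot 256c = 4\cdot 32c$. The bookkeeping of the $\epsilon$ and $O(N(\mathcal{N})/(\log N(\mathcal{N}))^2)\cdot V(z)$ terms is handled correctly.
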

%		Let us recall Theorem 10.7 of \cite{Nathanson} as it requires to estimate $|\tilde{R}|$. 
%		\begin{lemma}
%			Let $A$ be positive number, $X, Y$ are positive real numbers with $X>(\log Y)^{2A}$, and let 
%			\begin{align*}
%				D^*=\frac{(XY)^{1/2}}{(\log Y)^A}.
%			\end{align*}
%			Then for any arithmetic function $a(n)$ with $|a(n)|\leq 1$,
%			\begin{align*}
%				\sum_{d<D^*}\max_{(a,d)=1}\left|\sum_{n<X}\sum_{\substack{Z\leq p<Y\\np\equiv a(\bmod d)}}a(n)-\frac{1}{\phi(d)}\sum_{n<X}\sum_{\substack{Z\leq p<Y\\(np,d)=1}}a(n)\right|\ll_A \frac{XY(\log XY)^2}{(\log Y)^A}.
%			\end{align*}
%		\end{lemma}
%		Using properties of Dirichlet character and the large sieve inequality in $\mathbb{Z}[i]$, (see (3),\cite{Arpit Bansal}), the above lemma can be translated into $\mathbb{Z}[i]$ in the following form.

\section{Proof of Main Theorem}\label{Proof of Main Theorem}
In this section we prove Theorem \ref{main_thm} and for that we first fix $y=N(\mathcal{N})^{1/3}$ and $z=N(\mathcal{N})^{1/8}$.
\begin{proof}[Proof of Theorem \ref{main_thm}]
The bounds in Lemma~\ref{1st sum}, Lemma~\ref{Estimate_S_2} and Lemma~\ref{Estimate_S_3} together with Proposition~\ref{lowerbound of r(N)} reduce the lower bound of $r(\mathcal{N})$ as
		\begin{align*}
			r(\mathcal{N})&\gg \frac{4N(\mathcal{N})V(z)}{\log N(\mathcal{N})}\left[\left(\frac{e^\gamma}{2}\log 3+O(\epsilon)\right)-\frac{1}{2}\left(\frac{e^\gamma \log 6}{2}+O(\epsilon)\right)-\frac{1}{128}\left(32ce^\gamma +O(\epsilon)\right)\right]\\
			&\quad+O\left(\frac{\epsilon^{-1}N(\mathcal{N})}{(\log N(\mathcal{N}))^3}\right)+O\left(N(\mathcal{N})^{\frac{7}{8}}\right)+O\left(N(\mathcal{N})^{\frac{1}{3}}\right)\\
			&=\left(2\log 3-\log 6-c+O(\epsilon)\right)\frac{e^\gamma N(\mathcal{N})V(z)}{\log N(\mathcal{N})}+O\left(\frac{\epsilon^{-1}N(\mathcal{N})}{(\log N(\mathcal{N}))^3}\right)+O\left(N(\mathcal{N})^{\frac{7}{8}}\right)+O\left(N(\mathcal{N})^{\frac{1}{3}}\right).
		\end{align*}
It follows from Lemma \ref{lemma_for_V(z)} that for $z=N(\mathcal{N})^{1/8}$, we have
		\begin{align*}
			V(z)=\frac{4}{\pi}e^{-\gamma}\mathfrak{S}_1(\mathcal{N})\frac{1}{\log N(\mathcal{N})}\left(1+O\left(\frac{1}{\log N(\mathcal{N})}\right)\right).
		\end{align*}
Thus we can write the lower bound of $r(\mathcal{N})$ as
\begin{align*}
r(\mathcal{N})&\gg \left(2\log 3-\log 6-c+O(\epsilon)\right)\frac{N(\mathcal{N})}{\left(\log N(\mathcal{N})\right)^2}\mathfrak{S}_1(\mathcal{N})\left(1+O\left(\frac{1}{\log N(\mathcal{N})}\right)\right)\\
&\quad+O\left(\frac{\epsilon^{-1}N(\mathcal{N})}{(\log N(\mathcal{N}))^3}\right)+O\left(N(\mathcal{N})^{\frac{7}{8}}\right)+O\left(N(\mathcal{N})^{\frac{1}{3}}\right).
\end{align*}			
As $2\log 3-\log 6-c>0$, we can choose $\epsilon\in\left(0,\frac{1}{200}\right)$ small enough such that 
		\begin{align*}
			(2\log 3-\log 6-c+O(\epsilon))>0.
		\end{align*}
 For this fixed $\epsilon$, we also have 
		\begin{align*}
			&\frac{\epsilon^{-1}N(\mathcal{N})}{(\log N(\mathcal{N}))^3}\ll \frac{N(\mathcal{N})}{(\log N(\mathcal{N}))^3}.
		\end{align*}
Therefore, the lower bound of $r(\mathcal{N})$ simplifies to
		\begin{align*}
			r(\mathcal{N})\gg \mathfrak{S}_1(\mathcal{N})\frac{N(\mathcal{N})}{(\log N(\mathcal{N}))^2},
		\end{align*}
which completes the proof of our main theorem.
\end{proof}	
\section{Concluding Remarks}
The main highlight of this paper was to express every even Gaussian integer with sufficiently large norm as a sum of a Gaussian prime and a Gaussian integer with at most two Gaussian prime factors, which is an analogue of Chen's representation in rational case but this result was far from the conjecture given by Holben and Jordan \cite{Holben}. 

In the same article, they even provide stronger conjectures by restricting the conditions about the Gaussian primes. The conjectures precisely state that for every even Gaussian integer $\mathcal{N}$ with norm $N(\mathcal{N})>\sqrt{2}$, there are Gaussian primes $p_1$ and $p_2$ such that $\mathcal{N} = p_1+p_2$, where the primes $p_1$ and $p_2$ make the angle $\leq \pi/4$ with $\mathcal{N}$ and for an even Gaussian integer $\mathcal{N}$ with norm $N(\mathcal{N})>\sqrt{10}$, the angle between the primes and $\mathcal{N}$ reduces to $\leq \pi/6$. It is still unknown that how much reduction of the angles between the Gaussian integer and the primes are possible for the validity of the conjecture.

\subsection*{Acknowledgements} The authors would like to thank Prof. Soumya Bhattacharya for very useful suggestions and comments. This work was initiated by the authors when the first author was an INSPIRE Faculty at IISER Kolkata. The second author is a Prime Minister Research Fellow (PMRF) at IISER Kolkata funded by Ministry of Education, Govt. of India grant 0501972. Both the authors would like to thank IISER Kolkata and IIT Kharagpur for the support during this project.

\end{document}